\numberwithin{equation}{section}
\title[Local integrability for large $p$]
{Local integrability results in harmonic analysis on reductive
groups in large positive characteristic}
\author{Raf Cluckers, Julia Gordon and Immanuel Halupczok}
\newtheorem{thm}{Theorem}[section]
\newtheorem{prop}[thm]{Proposition}
\newtheorem{cor}[thm]{Corollary}
\newtheorem{lem}[thm]{Lemma}
\theoremstyle{definition}
\newtheorem{defn}[thm]{Definition}
\theoremstyle{remark}
\newtheorem{rem}[thm]{Remark}
\newcommand{\C}{{\mathbb C}}
\newcommand{\K}{{\mathbb K}}
\newcommand{\Z}{{\mathbb Z}}
\newcommand{\Q}{{\mathbb Q}}
\newcommand{\N}{{\mathbb N}}
\newcommand{\R}{{\mathbb R}}
\newcommand{\rf}{k}
\newcommand{\ri}{\Omega}
\newcommand{\gal}{\operatorname{Gal}}
\newcommand{\unr}{\mathrm{unr}}
\newcommand{\orb}{\mathrm{orb}}
\newcommand{\nilp}{\mathrm{nilp}}
\newcommand{\de}{{\text{Def}}}
\newcommand{\rde}{{\text{RDef}}}
\newcommand{\cA}{{\mathcal A}}
\newcommand{\cB}{{\mathcal B}}
\newcommand{\lef}{{\mathbb L}}
\newcommand{\cF}{{\mathcal F}}
\newcommand{\cC}{{\mathscr C}}
\newcommand{\scD}{{\mathscr D}}
\newcommand{\cL}{{\mathcal L}}
\newcommand{\ord}{\operatorname{ord}}
\newcommand{\ac}{\overline{\operatorname{ac}}}
\newcommand{\bG}{{\bf G}}
\newcommand{\bS}{{\bf S}}
\newcommand{\bT}{{\bf T}}
\newcommand{\bM}{{\bf M}}
\newcommand{\bZ}{{\bf Z}}
\newcommand{\GL}{\operatorname{GL}}
\newcommand{\SL}{\operatorname{SL}}
\newcommand{\gl}{\mathfrak{gl}}
\newcommand{\fm}{\mathfrak{m}}
\newcommand{\mexp}{\mathrm {\bf e}}
\newcommand{\fg}{{\mathfrak g}}
\newcommand{\ft}{{\mathfrak t}}
\newcommand{\fu}{{\mathfrak u}}
\newcommand{\res}{\operatorname{res}}
\newcommand{\sem}{\mathrm{ss}}
\newcommand{\spl}{\mathrm{spl}}
\newcommand{\reg}{\mathrm{reg}}
\newcommand{\cD}{{\mathcal D}}
\newcommand{\tf}{{C_c^\infty}}
\newcommand{\cN}{{\mathcal N}}
\newcommand{\Ad}{\operatorname{Ad}}
\newcommand{\ad}{\operatorname{ad}}
\newcommand{\scB}{\mathscr{B}}
\newcommand{\scA}{\mathscr{A}}
\newcommand{\cO}{{\mathcal O}}
\newcommand{\fto}{{\widehat\mu_{\cO}}}
\newcommand{\tr}{\operatorname{Tr}}
\newcommand{\aut}{\operatorname{Aut}}
\def\llp{\mathopen{(\!(}}
\def\llb{\mathopen{[\![}}
\def\rrp{\mathopen{)\!)}}
\def\rrb{\mathopen{]\!]}}
\begin{document}

\begin{abstract}
Let $\bG$ be a connected reductive algebraic
group over a non-Archimedean local field $\K$, and let $\fg$ be its Lie algebra.
By a theorem of Harish-Chandra, if $\K$ has characteristic zero, the
Fourier transforms of orbital integrals are represented on the
set of regular elements in $\fg(\K)$ by locally constant functions, which, extended by zero to all of $\fg(\K)$, are locally integrable.
In this paper, we prove that
these functions are in fact specializations of constructible motivic exponential functions. Combining this with the Transfer Principle for integrability of \cite{TI}, we obtain that Harish-Chandra's theorem holds also when $\K$ is a non-Archimedean local  field of sufficiently large positive characteristic.
{Under the hypothesis that mock exponential map exists}, this also implies local integrability  of
Harish-Chandra characters of admissible representations of $\bG(\K)$, where
$\K$ is an equicharacteristic field of sufficiently large
(depending on the root datum of $\bG$) characteristic.




\section*{R\'esum\'e}
Soit $\mathbf{G}$ un groupe alg\'ebrique reductif connexe au dessus d'un
corps local non-archim\'edien $\K$, et soit $\fg$ son alg\`ebre de Lie.
D'apr\`es un th\'eor\`eme de Harish-Chandra, si $\K$ est de
caract\'eristique z\'ero, alors les transform\'es de Fourier
d'int\'egrales orbitales sont repr\'esent\'es, sur l'ensemble des
\'elements r\'eguliers de $\fg(\K)$, par des fonctions localement
constantes, qui, si on les \'etend par z\'ero \`a tout $\fg(\K)$, sont
localement int\'egrables. Dans ce papier, nous d\'emontrons que
ces fonctions sont en fait des sp\'ecialisations de fonctions
motiviques constructibles exponentielles. En combinant ceci avec le
principe de transfert d'int\'egrabilit\'e de \cite{TI}, nous obtenons
que le th\'eor\`eme de Harish-Chandra est valable aussi quand $\K$ est
un corps local non-archim\'edien de caract\'eristique positive
suffisamment grande. Sous l' hypoth\`ese que l'application exponentielle
feinte existe, ceci implique aussi l'int\'egrabilit\'e  locale des
caract\`eres de Harish-Chandra de repr\'esentations admissibles de
$\bG(\K)$, o\`u $\mathbb{K}$ est un corps d'equicaract\'eristique
suffisamment grande (en fonction de la donn\'ee radicielle de $\bG$).

\end{abstract}

\maketitle

\section{Introduction}

In this paper we prove an extension  of Harish-Chandra's
theorems about local integrability of the functions representing various
distributions arising in harmonic analysis on $p$-adic groups  to the
positive characteristic case, when the residue characteristic is large.
Our method consists in transferring Harish-Chandra's results from characteristic zero to positive
characteristic.
In the recent years such transfer has become a prominent technique, culminating in the transfer of the Fundamental
Lemma from positive characteristic to characteristic zero,
\cite{cluckers-hales-loeser},  \cite{waldspurger:transferFL}. Two distinct ways of carrying out transfer have been described in the literature -- one method is based on the idea of close local fields, due to D. Kazhdan and  J.-L. Waldspurger, cf. \cite{waldspurger:weighted}. The other method is based on the program
outlined by T.C. Hales in \cite{hales:computed} of making harmonic
analysis on reductive groups over non-Archimedean local fields ``field-independent'' via the use of motivic integration, and this is the method we use.

We observe that the statements we are proving in this paper are much more analytic in nature than any of the statements previously handled by the transfer methods -- namely, here we talk about $L^1$-integrability, as opposed to much more algebraic-type statements about equalities between integrals of functions that are known to be integrable. In this sense it is somewhat surprising that the transfer is still possible, and it requires a new type of  transfer principle, which we prove in  \cite{TI}. We note that the use of this very general transfer principle allows us to avoid substantial technical difficulties that one faces when using the method of transfer based on the technique of close local fields, at the cost, however, of not getting a precise lower bound on the characteristic of the fields for which our results apply.


Our main technical result is
Theorem \ref{thm:ssint} showing that the functions representing the Fourier transforms of the orbital integrals form a family of so-called constructible motivic exponential functions.
These functions were introduced by
R. Cluckers and F. Loeser in \cite{cluckers-loeser:fourier}; they are defined in a field-independent manner by means of logic  (in fact, we use a slight generalization; see 
\S\ref{subsub:gen}).
Theorem \ref{thm:ssint} implies that Transfer principles for integrability and boundedness
apply to the Fourier transforms of all orbital integrals, and in particular, to the nilpotent ones. Once all the required properties of the nilpotent orbital integrals are transferred to the positive characteristic in Theorem \ref{thm:orb int loc int},
the analogues of many of the classical results for general distributions follow, thanks to the work of DeBacker \cite{debacker:homogeneity}, and J. Adler and J. Korman, \cite{adler-korman:loc-char-exp}. Thus we  obtain our main results: Theorems \ref{thm:char} and \ref{thm:general} (the former assumes the hypothesis on the existence of a mock exponential map, which we review in \ref{subsub:exp}).

We note that for $\GL_n$,
the local integrability of characters was
proved by Rodier \cite{rodier:loc-int}
for $p>n$, and by a different method, by
B. Lemaire \cite{lemaire:gl_n} for arbitrary $p$.
Lemaire also proved the local integrability of characters for
the inner forms of $\GL_n$ and $\SL_n$, and for twisted characters of $\GL_n$,
\cite{lemaire:gl_n(D)}, \cite{lemaire:sl_n(D)}.

{\bf Acknowledgement. } We are indebted to Thomas Hales,
Jonathan Korman, and Jyotsna Diwadkar, without whose influence this work
could not have appeared in the present form.
The second-named author is very grateful to
Jeff Adler,  William Casselman, Clifton Cunningham, Fiona Murnaghan, Loren Spice,
Sug Woo Shin, and Nicolas Templier
for multiple helpful
communications, and to Lance Robson for a careful reading of parts of the article.
{We thank the referee for  the careful reading and multiple helpful comments and corrections.
We gratefully acknowledge the support by several granting agencies, in particular, the grant G.0415.10 of the Fund for Scientific Research - Flanders - Belgium, the SFB~878 of the Deutsche
Forschungsgemeinschaft; the European Research Council under the European Community's Seventh Framework Programme (FP7/2007-2013) / ERC Grant Agreement No. 246903 NMNAG, the Labex CEMPI  (ANR-11-LABX-0007-01), and NSERC of Canada}.
\section{Results}

\subsection{Notation}\label{sub:notation}
For a discretely valued field $\K$,  its ring of integers will be denoted by
$\ri_\K$, the maximal ideal by ${\mathfrak p}_\K$, and the
residue field  by $\rf_\K$.

Let $\cA$  be the collection of all non-Archimedean local fields $\K$
of
characteristic zero,
with a chosen uniformizer $\varpi_\K$ of $\ri_\K$, and
let $\cB$ be the collection of all local fields $\K$ of positive
characteristic,  with a
uniformizer $\varpi_\K$ of $\ri_\K$.
The notation
$\K$ will always stand for a local field that lies
in $\cA\cup \cB$.
For an integer $M>0$,
we will also often use the collections $\cA_{M}$ and
$\cB_{M}$ of fields in $\cA$ and $\cB$ respectively,
with residue characteristic greater than $M$.

We use Denef-Pas language $\cL_\Z$ with coefficients in $\Z$ -- this is a first-order language of logic; roughly speaking, formulas in this language define subsets of affine spaces uniformly over all local fields $\K\in \cA\cup \cB$,
(see Appendix B for precise definitions).
By  ``definable'' we shall  mean, definable in the language $\cL_\Z$.
We survey all the definitions and theorems from the theory of motivic integration that we use in Appendix B. {We note that if one wishes to work only with reductive groups defined over a fixed number field $E$ (with a ring of integers $\ri$) and its completions, then one can use the language $\cL_\ri$ defined in Appendix B; all the results still apply since any
language $\cL_\ri$   includes the language $\cL_\Z$.}

Throughout this paper, $\bG$ stands for a connected reductive algebraic group over a local field $\K$,
and $\fg$  for its Lie algebra. For $X\in \fg(\K)$, $D_G(X)$ is the discriminant of $X$,
see Appendix A for the definition.

Following Kottwitz, \cite{kottwitz:clay}, we call a function $F(X)$, defined and locally constant on the set of regular elements $\fg(\K)^\reg$, ``nice''
if it satisfies the following two requirements:
\begin{itemize}
\item when extended by zero to
all of $\fg(\K)$, it is locally integrable, and
\item the function $|D_G(X)|^{1/2}F(X)$ is locally bounded on $\fg(\K)$.
\end{itemize}

Similarly, call a function on $\bG(\K)$ ``nice'', if it satisfies
the same conditions on $\bG(\K)$, with $D_G(X)$ replaced by its group version $D_G(g)$, namely,
the coefficient at $t^r$ (where $r$ is the rank of $\bG$) in the polynomial $\det\left((t+1)I-\Ad(g)\right)$.

\subsection{The statements}
We refer to Appendix A for all the definitions (of orbital integrals, etc.) and a survey of
the classical results.

Our main result states that the Fourier transforms of orbital integrals are represented by nice functions, in large positive characteristic.

\begin{thm}\label{thm:orb int loc int}
There exists a constant $M_\bG^\orb >0$ that depends only on the  absolute root datum
of $\bG$, such that
for every $\K\in \cB_{M_\bG^\orb}$,  for every $X\in \fg(\K)$,
the function $\widehat \mu_X$ is a nice function on $\fg(\K)$.
\end{thm}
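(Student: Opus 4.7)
The plan is to apply the Transfer Principles for integrability and for boundedness from \cite{TI} to the motivic family of functions provided by Theorem \ref{thm:ssint}. By that theorem, there is a constructible motivic exponential function $H(X,Y)$ on $\fg\times\fg$ whose specialization over any $\K\in\cA\cup\cB_{M_0}$ coincides with $\widehat\mu_X(Y)$ on the $\bG$-regular locus in $Y$, for some $M_0$ depending only on the absolute root datum of $\bG$. Extension by zero to the non-regular set of $Y$ is a definable operation, so we obtain a definable family of functions on $\fg$ with $X\in\fg$ as a parameter, uniformly in $\K\in\cA\cup\cB_{M_0}$.

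Next I would invoke Harish-Chandra's theorem in characteristic zero (recalled in Appendix A): for every $\K\in\cA$ and every $X\in\fg(\K)$, the function $\widehat\mu_X$ is locally integrable on $\fg(\K)$, and $|D_G|^{1/2}\widehat\mu_X$ is locally bounded on $\fg(\K)$. Both conclusions can be reformulated via a definable exhaustion of $\fg$ by balls $B_n=\{Y:\ord(Y)\ge -n\}$: local integrability becomes the statement that $\int_{B_n}|H(X,Y)|\,dY<\infty$ for every $X$ and every $n$, and local boundedness of $|D_G|^{1/2}H(X,\cdot)$ becomes the statement that for every $X$ and every $n$ there exists $C$ with $|D_G(Y)|^{1/2}|H(X,Y)|\le C$ on $B_n$.

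With these formulations in place, I would apply the parametric Transfer Principle for integrability and its analogue for boundedness from \cite{TI}, with $X$ as parameter. Since the two properties hold for every $X$ over every $\K\in\cA$, the transfer principles produce a constant $M$, depending only on $H$ (hence only on the absolute root datum of $\bG$), such that the same properties hold for every $X$ over every $\K\in\cB_M$. Setting $M_\bG^\orb=\max(M_0,M)$ completes the argument.

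The main obstacle is the setup rather than the transfer itself. One must verify that ``local integrability'' and ``local boundedness with a constant permitted to depend on $X$ and on the compact set'' can be encoded as properties to which the parametric version of the transfer principle of \cite{TI} directly applies, and that the exhaustion of $\fg$ by the $B_n$ is innocuous (the local statements for each fixed $n$ already imply the global local statements). One must also check that extending $\widehat\mu_X$ by zero across the singular locus, rather than leaving it undefined there, does not interfere with the integrability transfer near the discriminant locus, where integrability is the most delicate.
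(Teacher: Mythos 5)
Your proposal is correct and follows essentially the same route as the paper: combine Theorem \ref{thm:ssint} with Harish-Chandra's characteristic-zero result and the transfer principles of \cite{TI} (Theorems \ref{thm:ti} and \ref{thm:bounded}), which as quoted already cover local integrability and local boundedness, so your ball-exhaustion reformulation is not needed. The only detail worth noting is that $|D_G(Y)|^{1/2}H(X,Y)$ is itself a motivic exponential function in the generalized sense of \S\ref{subsub:gen} (since $\ord D_G$ is definable), which is how the paper handles the boundedness half, and that the harmless constant $c_\K$ from Theorem \ref{thm:ssint} does not affect either property.
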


In this theorem and all similarly phrased  statements below, our assertion that there exists a constant $M>0$ that depends only on the  absolute root datum
of $\bG$ such that so-and-so properties hold for $\bG(\K)$ with $\K\in \cA_M\cup \cB_M$, has the following meaning.
As discussed in \S\ref{sub:roots} below, given an absolute root datum $\Psi$ (which is a field-independent construct), there exist finitely many possibilities  for the root data of reductive groups over non-Archimedean local fields having the absolute root datum $\Psi$. We parameterise these possibilities by points of a definable set in \S\ref{sub:roots}.
Then our statement says that there exists a constant $M$ that depends only on $\Psi$, such that for every local field
$\K\in \cA_M\cup \cB_M$, for all possible connected reductive groups  $\bG$ defined over $\K$ with absolute root datum $\Psi$, the assertions of the theorem hold.

Theorem \ref{thm:orb int loc int} is proved below in \S\ref{subsub:th1p2}.

Thanks to the local character expansion near a tame semisimple element, the above theorem implies that
Harish-Chandra characters of admissible representations are represented by nice functions on the group, under {the additional  hypothesis on the existence of a so-called mock exponential map}.
Local character expansion in large positive characteristic
is proved by DeBacker \cite{debacker:homogeneity} near the identity,
and by Adler-Korman \cite{adler-korman:loc-char-exp} near a general tame semisimple element, if the mock exponential map exists.
We start by quoting the hypothesis, which uses the notation defined in \S \ref{sub:MP} below.

\subsubsection{The exponential map hypothesis}\label{subsub:exp}
\cite{debacker:homogeneity}*{Hypothesis 3.2.1}.
{\it Suppose $r>0$. There exists a bijective map $\mexp:\fg(\K)_r\to\bG(\K)_r$
such that
\begin{enumerate}
\item for all pairs $x\in \scB(\bG, \K)$, $s\in \R_{\ge r}$, we have
\begin{enumerate}
\item $\mexp(\fg(\K)_{x,s})=\bG(\K)_{x,s}$,
\item For all $X\in \fg(\K)_{x,r}$ and for all $Y\in \fg(\K)_{x,s}$, we have
$\mexp(X)\mexp(Y)\equiv \mexp(X+Y) \mod \bG(\K)_{x, s^+}$,
and
\item $\mexp$ induces a group isomorphism of $\fg(\K)_{x,s}/\fg(\K)_{s, s^+}$ with
 $\bG(\K)_{x,s}/\bG(\K)_{s, s^+}$;
\end{enumerate}
\item for all $g \in \bG(\K)$ we have
$\operatorname{Int}(g)\circ \mexp=\mexp\circ\operatorname{Ad}(g)$;
\item $\mexp$ carries $dX$ into $dg$  (where $dX$ and $dg$ are Haar measures on $\fg(\K)$ and $\bG(\K)$, respectively, associated with the same normalization of the Haar measure
on $\K$,  cf.\S \ref{sub:HM}).
\end{enumerate}
}

For classical groups one can take $\mexp$ to be the Cayley transform,
for all $r>0$.

\begin{thm}\label{thm:char}
There exists a constant $M_\bG >0$ that depends only on the absolute root datum of $\bG$,
such that if $\K\in \cB_{M_\bG}$ and
Hypothesis
\ref{subsub:exp} holds for $\bG(\K)$ with some $r>0$, then
for every  admissible
representation  $\pi$ of ${\bG}(\K)$,
its Harish-Chandra character $\theta_{\pi}$ is a nice function on
$\bG(\K)$; in particular, the integral
$\int_{\bG(\K)}\theta_{\pi}(g)f(g)\, dg $
converges, and equals $\Theta_\pi(f)$, for all test functions $f\in C_c^\infty(\bG(\K))$.
\end{thm}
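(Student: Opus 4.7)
The plan is to deduce the theorem from Theorem \ref{thm:orb int loc int} via the local character expansion of Adler-Korman \cite{adler-korman:loc-char-exp}. Fix an admissible representation $\pi$ of $\bG(\K)$ and a tame semisimple element $\gamma \in \bG(\K)$; let $\bG_\gamma$ be its connected centralizer, with Lie algebra $\fg_\gamma$. Under Hypothesis \ref{subsub:exp} and for $p$ sufficiently large depending only on the absolute root datum of $\bG$, the main result of \cite{adler-korman:loc-char-exp} supplies constants $c_\cO(\pi,\gamma)\in\C$, indexed by nilpotent $\bG_\gamma(\K)$-orbits $\cO$ in $\fg_\gamma(\K)$, together with an open neighborhood $U$ of $0$ in $\fg_\gamma(\K)$, such that
$$\theta_\pi(\gamma\cdot\mexp(X)) \;=\; \sum_{\cO} c_\cO(\pi,\gamma)\,\widehat{\mu_\cO^{\bG_\gamma}}(X) \qquad \text{for } X\in U\cap\fg_\gamma(\K)^\reg.$$

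Next, I would apply Theorem \ref{thm:orb int loc int} to each reductive group $\bG_\gamma$, obtaining that every $\widehat{\mu_\cO^{\bG_\gamma}}$ is a nice function on $\fg_\gamma(\K)$ once $p$ exceeds $M_{\bG_\gamma}^\orb$. Since the connected centralizers of semisimple elements of $\bG$ have absolute root data drawn from a finite list determined by the absolute root datum of $\bG$, the constants $M_{\bG_\gamma}^\orb$ admit a uniform bound $M_1$ depending only on the absolute root datum of $\bG$. Combining the expansion with niceness of the $\widehat{\mu_\cO^{\bG_\gamma}}$, and using the standard comparison $|D_G(\gamma\cdot\mexp(X))| = |D_G(\gamma)|\cdot|D_{G_\gamma}(X)|\cdot(1+o(1))$ as $X\to 0$ together with the Jacobian condition (3) of Hypothesis \ref{subsub:exp}, then yields local integrability of $\theta_\pi$ near $\gamma$ and local boundedness of $|D_G(g)|^{1/2}\theta_\pi(g)$ near $\gamma$.

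Third, I would globalize as follows. For $p$ large enough in terms of the absolute root datum of $\bG$, every semisimple element of $\bG(\K)$ is tame, and the non-semisimple locus has Haar measure zero. Taking $M_\bG$ to be the maximum of $M_1$ and of the thresholds required by the Adler-Korman expansion and by the tameness statement, a partition-of-unity argument on the compact support of a test function $f\in C_c^\infty(\bG(\K))$ upgrades the local niceness near each tame semisimple element to niceness on all of $\bG(\K)$. The identity $\int_{\bG(\K)}\theta_\pi(g) f(g)\,dg = \Theta_\pi(f)$ then follows by continuity in $f$, since both sides agree on functions supported in $\bG(\K)^\reg$ and the singular set is negligible.

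The main obstacle is the uniformity of the constant $M_\bG$: one must synchronize the thresholds arising from Theorem \ref{thm:orb int loc int} applied to every possible centralizer, from the positive-characteristic Adler-Korman expansion, and from the tameness of semisimple elements. The finiteness of the list of absolute root data of centralizers of semisimple elements of $\bG$ is what makes a single uniform $M_\bG$ depending only on the absolute root datum of $\bG$ possible.
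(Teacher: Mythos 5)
Your overall strategy is the same as the paper's: combine the Adler--Korman local character expansion near a tame semisimple element $\gamma$ with Theorem \ref{thm:orb int loc int} applied to the connected centralizer, use the finiteness of the list of absolute root data of such centralizers to get a uniform $M_\bG$, and then globalize. However, two steps as you state them have genuine gaps. First, your concluding argument for $\int_{\bG(\K)}\theta_\pi(g)f(g)\,dg=\Theta_\pi(f)$ --- ``by continuity in $f$, since both sides agree on functions supported in $\bG(\K)^\reg$ and the singular set is negligible'' --- does not work. Two distributions that agree on $C_c^\infty(\bG(\K)^\reg)$ may differ by a distribution supported on the singular set (which need not vanish just because that set has measure zero), and $\Theta_\pi$ carries no continuity that would let you pass from $f\cdot{\bf 1}_{U_n}$ to $f$. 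The paper closes this gap by observing that DeBacker's local character expansion is actually established at the level of distributions, i.e.\ $\Theta_\pi(f\circ\mexp^{-1})=\sum_\cO c_\cO(\pi)\widehat\Phi_\cO(f)$ holds for all $f\in C_c^\infty(\fg(\K)_r)$ without any support restriction to the regular set; this, together with the now-established convergence of $\int\theta_\pi f$, gives the identity.

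Second, your globalization is underspecified in two respects. (i) Neighborhoods of semisimple elements do not obviously cover the non-semisimple locus, and ``measure zero'' is irrelevant for \emph{local} integrability and boundedness near a non-semisimple point: one must show that the \emph{invariant} open sets ${}^M\gamma M_r$ cover $M$ as $\gamma$ ranges over semisimple elements. The paper does this via the Jordan decomposition: writing $m=\gamma_s\gamma_u$ with $\gamma_u\in C_M(\gamma_s)$, one conjugates $\gamma_u$ arbitrarily close to the identity so that $m$ lands in ${}^M\gamma_s M_r$. (ii) Passing from niceness of the expansion on $\gamma M_r\subset M=C_G(\gamma)$ to niceness of $\theta_\pi$ on the corresponding $\bG(\K)$-invariant neighborhood is not just a matter of the discriminant comparison $|D_G(\gamma\mexp(X))|=|D_G(\gamma)|\,|D_{G_\gamma}(X)|(1+o(1))$; it requires Harish-Chandra's descent theorem (the Corollary to Theorem 11 in van Dijk's notes), which converts integrability of the descended distribution $\theta$ on $M$ into integrability of $\theta_\pi$ on $\bG(\K)$ via the Weyl-type integration formula. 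Relatedly, the function appearing on the left of the Adler--Korman expansion (as quoted in Theorem \ref{thm:expansion_ss}) is the descended distribution $\theta$ on $M_r''$, not $\theta_\pi$ itself, so the descent step cannot be bypassed. With these two points repaired your argument coincides with the paper's proof.
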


{We prove this theorem in \S \ref{subsub:th2} below. }

\begin{rem}
DeBacker's result on the local character expansion that we use in the proof of this theorem  requires, in its full strength, the assumption that Hypothesis \ref{subsub:exp} holds for
$r\in \R$ such that
$\fg_r=\fg_{\rho(\pi)^+}$, where $\rho(\pi)$ is the depth of $\pi$. Here we only use the fact that the local character expansion holds in some (definable) neighbourhood of the identity,  which is yielded by DeBacker's proof
assuming just the existence of the mock exponential for some $r>0$. Note also that we do not require the mock exponential  map to be definable.
\end{rem}

Finally, Theorem \ref{thm:orb int loc int} also implies (thanks to a result of DeBacker) that Fourier transforms of general invariant distributions on $\fg(\K)$ with support bounded modulo conjugation are represented by nice functions  in a neighbourhood of the origin. This is Theorem \ref{thm:general}.

The rest of the main body of the paper is devoted to the proof of these theorems. Two appendices are provided for the reader's convenience -- Appendix A contains a brief summary of the definitions and relevant classical results in harmonic analysis on $p$-adic groups, and Appendix B summarises the definitions and results from the theory of motivic integration, which is used in the proofs.

\section{Definability of Moy-Prasad filtration subgroups}
From now on we will freely use the language of definable subassignments, and constructible motivic functions; please see Appendix B for definitions and all related notation.
We start by setting up the definition of the group, Lie algebra, and Moy-Prasad filtration subgroups in Denef-Pas language.
As explained in Appendix B, for a group,  specific subgroup, etc. to be definable, roughly speaking, means that it can be defined uniformly for all local fields $\K$ of sufficiently large residue characteristic, by formulas in a first-order language of logic, which do not depend on the field themselves.

\subsection{Root datum and the group}\label{sub:roots}
The first step is to realize the group and its Lie algebra as definable subassignments, so that the methods of motivic integration apply.
From now on we assume that the residue characteristic $p$ is large enough so that
the group $\bG$ splits over a tamely ramified extension of $\K$.

Split reductive groups $\bG$ are classified by the root data
$\Psi=(X_{\ast}, \Phi, X^{\ast}, \Phi^\vee)$ consisting of the character group of
a split maximal torus $\bT$ in $\bG$, the set of roots, the cocharacter group, and the set of coroots.
The set of possible root data of this form  is completely field-independent.
Given a root datum $\Psi$, the group $\bG(F)$ is a definable subset of $\GL_n(F)$,
defined as the image of a definable embedding $\Xi:\bG\hookrightarrow \GL_n$, defined over {$\Z[1/R]$ for some large enough $R$} (see \cite{cluckers-hales-loeser}*{\S 4.1}, where such an embedding is denoted by $\rho_D$, with $D$ denoting the root datum).

We showed that general reductive groups are definable (or, more precisely, appear as members of a constructible family),  in \cite{S-T}*{Appendix B}.
This is based on the fact that
every reductive group splits over the separable closure of $F$, and
the $F$-forms of a group are in one-to-one correspondence with the Galois cohomology set $H^1(F, \mathbf{Aut}(\bG))$
(see e.g. \cite{springer:lag2}*{\S 16.4.3}).
Here we  {recall this construction briefly, also introducing the notation for the
intermediate unramified extension of $\K$ that will be used below}.

Recall that we are assuming that $p$ is large enough so that
$\bG$ splits over a tamely ramified extension; let $e$ be the ramification index. Then there exists an unramified  extension
$\K_f/\K$ of some degree $f$, such that $\bG$ splits over a field
 $L$, which is a totally ramified Galois extension of $\K_f$.
Let $\theta$ be a generator of $\gal(\K_f/\K)$ (the Frobenius element); and let
 $m=fe$ be the degree $[L:\K]$.
Let  {$\Gamma=\gal(L/\K)=\{ \sigma_1, \dots \sigma_m \}$.}

 We have the exact sequence of Galois groups
$$
1 \to \gal(L/\K_f) \to \gal(L/\K) \to \gal(\K_f/\K) \to 1.
$$

 Let us assume that $\{ \sigma_1, \dots \sigma_e \}$ is the subgroup of $\Gamma$ fixing $\K_f$ and that $\sigma_m$ projects to $\theta$ under the last map.

 In  \cite{S-T}*{\S B.4.2}, we constructed  a definable subassignment
$S_{[\Gamma]} \subset h[ {m+m^3}, 0,0]$, with the following property.
Given a local field $\K$ of sufficiently large residue characteristic,
$S_{[\Gamma]}$ specializes to the set of tuples
$(\bar b, \sigma_1,\dots, \sigma_m)$, where:
\begin{itemize}
\item $\bar b$ is a tuple of coefficients of a minimal polynomial over $\K$ that gives rise to a degree $m$ extension, which we denote by $\K_{\bar b}$;
\item  $\sigma_1, \dots, \sigma_m$
are $m\times m$ matrices, defining automorphisms of
$\K_{\bar b}$ over $\K$, and
\item the group  $\{\sigma_1, \dots, \sigma_m \}$ is isomorphic to
$\Gamma$.
\end{itemize}
  We also can, and do, add the condition that $\K_{\bar b}$ contains an unramified extension
  $\K_f$ of degree $f$, fixed by  $\{ \sigma_1, \dots, \sigma_{e}\}$, and that
 $\sigma_m$ projects to  $\theta$ -- the Frobenius element of $\K_f$, by stipulating that the restriction of $\sigma_m$ is a generator of $\gal(\K_f/\K)$, which can be phrased using Denef-Pas language formulas.
 {We are using $[\Gamma]$ as a subscript (as opposed to $\Gamma$) to emphasize that the subassignments
$S_{[\Gamma]}$ and $Z_{[\Gamma]}$ depend only on the isomorphism class of $\Gamma$, and not on a specific group.}

Suppose $\Psi=(X_\ast, \Phi, X^\ast, \Phi^\vee)$ is an absolute root datum as above. Then it defines a split reductive group
$ {\bG^\spl}$ over $\K$, and therefore we get a definable subassignment $Z_{[\Gamma]}$ over $S_{[\Gamma]}$
that specializes to the set of $1$-cocycles
$Z^1(\Gamma, \aut(\mathbf{ {G^\spl}})(\K_{\bar b}))$.
Finally, suppose $\bG$ is a group defined over $\K$ that splits over an extension
$L$ as above.
Then there exists a tuple $\bar b$ such that $L$ is isomorphic to $\K_{\bar b}$.
Let $\bG^{\spl}$ be the split form of $\bG$. Then we can think of $\bG$ as the group
$\bG_z$ corresponding to a cocycle
$z\in Z^1(\Gamma, \aut(\mathbf{G}^\spl)(\K_{\bar b}))$. It follows that $\bG(\K)$ appears as a fibre of a definable subassignment over $Z_{[\Gamma]}$ (by taking
 {$\{ z\cdot\sigma_1, \dots,  z\cdot\sigma_m\}$}-fixed points, cf. \cite{S-T}*{\S B.4.3}).

Given an absolute root datum $\Psi$, there are finitely many possibilities for the root data of the groups $\bG$ over $\K$ with the absolute root datum $\Psi$. Let $M_{\Psi}$ be the constant such that when $\K\in \cA_{M_\Psi}\cup \cB_{M_\Psi}$, all possible reductive groups
$\bG$ over $\K$ with the absolute root datum $\Psi$ (up to isomorphism) appear as fibres of definable subassignments over the subassignments  $Z_{[\Gamma]}$, as $[\Gamma]$ runs over the finite set of all the possibilities relevant for $\Psi$.

\subsection{Bruhat-Tits building}\label{sub:building}
Here we follow the notation of \cite{debacker:nilp} and
\cite{adler-debacker:bt-lie}
as much as possible.
Let us first review this notation.
Let $\scB=\scB(\bG, \K)$ denote the (enlarged) building of $\bG(\K)$.
Fix a maximal unramified extension $\K^\unr$ of $\K$.
Let $\bS$ be a maximal $\K$-split torus of $\bG$. Let $\bT$ be the maximal
$\K^\unr$-split torus of $\bG$ containing $\bS$.  Let $\bZ$ be the centralizer of $\bT$ in $\bG$; it is a maximal torus of $\bG$, defined over $\K$.
Let $L$ be the extension over which $\bG$ splits, as above, and let
$\K_f=\K^\unr\cap L$ be its maximal unramified part. Then $\bT$ splits over
$\K_f$.
Let $\scA$ be the apartment of $\bT(\K_f)$ in $\scB(\bG, \K_f)$. We can identify
$\scA(\bS, \K)$ with the $\gal(\K_f/\K)$-fixed points of $\scA$.

Let $\Phi^\unr$ be the set of roots of $\bG$ relative to $\bT$ and $\K_f$, and let $\tilde\Phi^\unr$ be the set of affine roots of $\bG$ relative to $\bT$, $\K_f$, and our choice of valuation
on $\K$.
We observe that $\Phi^\unr$ can be recovered from the root datum and the action $z\cdot \sigma_i$,  {$1\le i \le m$}, (where $z$ and $\sigma_i$ are as above in \S\ref{sub:roots}).
Hence, we can use $\Phi^\unr$ and $\tilde\Phi^\unr$ in the constructions of subassignments
over $S_{[\Gamma]}$. In this sense there is no harm in including the (possibly non-reduced) root system $\Phi^\unr$ as part (though redundant) of the given root datum defining the group $\bG$.

In this paper, we will only need to use
a fixed alcove $C$ in the apartment
$\scA$ such that
$\gal(\K_f/\K)$-fixed points of its closure $\bar C$ (in the $p$-adic topology) contain an alcove of $\scB$.
Note that $\bar C$ is a poly-simplicial set.
Moreover, the set  $\bar C^{z\cdot\theta}$
of  $\gal(\K_f/\K)$-fixed points of
$\bar C$ is also a poly-simplicial set, since the Galois action is compatible with the poly-simplicial structure.
In fact, we will need only the following information
about the set  $\bar C^{z\cdot\theta}$:
\begin{enumerate}
\item\label{c1} the list of its faces;
\item\label{c2} incidence relations between the faces;
\item\label{c4} a certain finite set of points in $\bar C$, called {\it optimal points}, discussed in the next subsection.
\end{enumerate}

We observe that $\scA=X_{\ast}(\bT)\otimes \R$ is an affine space of {dimension} determined by $\Phi^\unr$, and the affine roots (which also are pre-computed from $\Phi^\unr$)  define the hyperplanes in it, which, in turn, determine $\bar C$.
Thus, the list of faces of $\bar C$ can be pre-computed once the root system $\Phi^\unr$
is given. The action of $z\cdot \theta$ determines a permutation $\tau$ of $\Phi^\unr$, which, in turn, allows us to determine the list of faces of $\bar C^{z\cdot \theta}$.
In summary, the information we need about
$\bar C^{z\cdot\theta}$ is determined by the root datum $\Psi$ and the permutation
$\tau$; and $\tau$ is determined by the parameter $z\in Z_{[\Gamma]}$ in a definable way.
More precisely, given the root datum $\Psi$, there is a finite number
of possibilities for the list of faces of $\bar C^{z\cdot\theta}$, and we can decompose $Z_{[\Gamma]}$ into a disjoint union of  finitely many definable subsets, according to which possibility of
$\bar C^{z\cdot\theta}$ a given cocycle $z$ gives rise to.
We will denote these subsets, indexed by the pairs $(\Phi^\unr, \tau)$,
where $\Phi^\unr$ is a root system and $\tau$ is a permutation acting
on $\Phi^\unr$, by
$Z_{\Phi^\unr, \tau}$.
Once we have done that, we can assume that the list of faces of $\bar C^{z\cdot\theta}$
is part of the data defining $\bG$, and use it in the definitions of definable sets with parameters in $Z_{[\Gamma]}$.

Now let us turn to the set of optimal points. We will see that it can also
be pre-computed from the root datum.

\subsubsection{Optimal points}\label{subsub:OP}
In \cite{moy-prasad:k-types}*{\S 6.1}, Moy and Prasad
define the set $\mathcal O$ of the so-called optimal points; we will denote this set by $\mathfrak P$, since the notation $\mathcal O$ is reserved for the
orbits.

Let $C$ be the alcove in $\scA$ that gave rise to the set $\bar C^{z\cdot\theta}$ as above.
Let $\Sigma$ be the set of affine roots $\psi\in\tilde \Phi^\unr$ that satisfy
$\psi\vert_{C}>0$, $(\psi-1)\vert_{C}<0$. This is a finite set that depends only
on $\Phi^\unr$.
Further, {let ${\mathfrak C}_{\Sigma}$ be the collection of all the
$\gal(\K_f/\K)$-invariant subsets $\mathfrak S$ of $\Sigma$; this finite}
collection depends only on the root datum $\Phi^\unr$ and the permutation $\tau$, as above.

Let $\mathfrak S\subset \Sigma$ be an element of ${\mathfrak C}_{\Sigma}$.
Now we quote \cite{adler-debacker:bt-lie}*{\S 2.3}, where it is
stated that there exists a point
$x_{\mathfrak S}\in \bar{C}$ such that:
\begin{enumerate}
\item[(i)] $\min_{\psi\in \mathfrak S}\psi(x_{\mathfrak S})\ge
\min_{\psi\in \mathfrak S}\psi(y)$ for all $y\in \bar{C}$;
\item[(ii)] $\psi(x_{\mathfrak S})$ is rational for all $\psi\in {\tilde\Phi^\unr}$;
\item[(iii)] $x_{\mathfrak S}$ is  $\gal(\K_f/\K)$-invariant.
\end{enumerate}

We observe that for the future constructions, we do not need the point
$x_{\mathfrak S}$ itself, but rather the tuple of its ``baricentric coordinates''
$\left(\psi(x_{\mathfrak S})\right)_{\psi\in \Sigma}$.
As pointed out in \cite{moy-prasad:k-types}*{\S 6.1}, finding optimal
points is a problem of linear programming. The input for this problem
is the field-independent set of affine roots $\Sigma$; thus the output is also a field-independent tuple of rational coordinates $\psi(x_{\mathfrak S})$.

We denote by $\mathfrak P_{\Phi^\unr, \tau}$ the set
$$\mathfrak P_{\Phi^\unr, \tau}=\{(\psi(x_{\mathfrak S}))_{\psi\in \Sigma}\}_{\mathfrak S\in \mathfrak C_\Sigma}.$$

\subsection{Moy-Prasad filtrations}\label{sub:MP}
In \cite{moy-prasad:k-types}, Moy and Prasad associate with each pair
$(x, r)$, where $x\in \scB(\bG,\K)$ and $r\ge 0$,
(respectively, $r\in \R$):
\begin{itemize}
\item subgroups $\bG(\K)_{x, r^+}\subset \bG(\K)_{x, r}$ of $\bG(\K)$,  for $r\ge 0$;
\item lattices $\fg(\K)_{x, r^+}\subset \fg(\K)_{x, r}$ in $\fg(\K)$,  for $r\in \R$.
\end{itemize}

When $r=0$, it is omitted from the notation; thus by definition,
$\bG(\K)_{x}=\bG(\K)_{x, 0}$, $\bG(\K)_{x}^+=\bG(\K)_{x, 0^+}$,
$\fg(\K)_x=\fg(\K)_{x, 0}$, $\fg(\K)_x^+=\fg(\K)_{x, 0^+}$.
The groups $\bG(\K)_{x}$  and $\bG(\K)_{x}^+$ and the corresponding
lattices in the Lie algebra  depend only on the facet that contains the
point $x$. Therefore, for a facet $F$ we will denote them by
$\bG(\K)_{F}$, $\bG(\K)_{F}^+$, and $\fg(\K)_F$, $\fg(\K)_F^+$, respectively.

We will use the fact that for a group that splits over a
tamely ramified extension, the filtration subgroups with $r>0$ can
be obtained from {its split form} by taking Galois-fixed points.
We first recall the definitions (this version is quoted from
\cite{adler-debacker:bt-lie}, see also \cite{debacker:singapore}) for the split group
$\bG^\spl(L)=\bG(L)$, where $L$ is the extension that splits $\bG$, as above.
First, for  any torus $\bT$ defined over $L$, and for any extension $E$ of
$L$, define, for any $r\in \R$,
$$\ft(E)_r:=\{H\in \ft(E)\mid \ord(d\chi(H))\ge r \text{ for all }
\chi \in X^\ast(\bT)\}.$$
Fot a torus $\bT$ that is split over $L$, one can  define the
filtration subgroups of $\bT(E)$ simply as follows:
for $r\ge 0$, let
$$\bT(E)_r:=\{t\in \bT(E)\mid \ord(\chi(t)-1)\ge r \text{ for all }
\chi \in X^\ast(\bT)\}.$$
Similarly, define
\begin{equation*}
\begin{aligned}
&\ft(E)_{r^+}:=\{H\in \ft(E)\mid \ord(d\chi(H))> r \text{ for all }
\chi \in X^\ast(\bT)\};\\
&\bT(E)_{r^+}:=\{t\in \bT(E)\mid \ord(\chi(t)-1)> r \text{ for all }
\chi \in X^\ast(\bT)\}.
\end{aligned}
\end{equation*}

Once and for all, fix a  splitting
$({\bf B}, \bT, \{x_{\alpha}\})$ of $\bG^\spl$, defined over $\Q$.
This splitting determines a well-defined subgroup $G_0=\bG(\ri_L)$ of $\bG(L)$.
Let $U_\alpha$ be the one-parameter subgroup corresponding to $x_\alpha$:
$U_{\alpha}=1+L x_\alpha$.
Let $\psi=\alpha+n\in \tilde \Phi$ be an affine root.
Define
\begin{equation}\label{eq:Upsi}
U_\psi=\{g \in U_{\alpha}\mid g=1+tx_\alpha, \quad \ord(t)\ge n\}.
\end{equation}
Note that $U_{\alpha+0}=U_\alpha\cap G_0$.
Similarly, one can define the sublattices $\fu(L)_\psi\subset \fg(L)$
(with each $\fu(L)_\psi$ contained in the root subspace $\fg(L)_\alpha$, where $\alpha$ is the gradient of $\psi$).

Finally, let $x\in \scA(\bT, L)$, $r\in \R$.
Then one can define
\begin{equation*}
\begin{aligned}
&\fg(L)_{x,r}=\ft(L)_r\oplus \sum_{\{\psi\in \tilde \Phi\mid \psi(x)\ge r\}}\fu(L)_\psi\\
&\fg(L)_{x,r^+}=\ft(L)_{r^+}\oplus \sum_{\{\psi\in \tilde \Phi\mid \psi(x)> r\}}
\fu(L)_\psi.
\end{aligned}
\end{equation*}
Similarly for the group, for $r\ge 0$,
define $\bG(L)_{x,r}$ as the subgroup of $\bG(L)$ generated by $\bT(L)_r$
and the subgroups $U_\psi$ with $\psi(x)\ge r$,  and
$\bG(L)_{x,r^+}$ as the subgroup of $\bG(L)$ generated by $\bT(L)_{r^+}$
and the subgroups $U_\psi$ with $\psi(x)> r$.

Let $\tilde \R$ be the set $\R\cup \{s^+\mid s\in \R\}$, with the natural ordering
(see e.g. \cite{adler-debacker:mk-theory}*{\S 1.1} for details).

The key fact (quoted in this form from
\cite{adler-debacker:mk-theory}*{Lemma 2.2.1, Remark 2.2.2}) we use is that
since $L/\K$ is a tamely ramified Galois extension,
\begin{enumerate}
\item\label{fix1} $\scB(\bG, L)^{\Gamma}=\scB(\bG, \K)$, and
\item\label{fix2} for $x\in \scB(\bG, \K)$,
$
(\fg(L)_{x,r})^\Gamma = \fg(\K)_{x,r},\quad \text{ for } r\in \tilde \R,
$
\item\label{fix3}
$(\bG(L)_{x,r})^\Gamma = \bG(\K)_{x,r}$ for  $r\in \tilde \R_{>0}$.

\end{enumerate}
Note that if $L/\K$ is unramified, the equality in
(\ref{fix3}) holds for $r=0$ as well.

For a non-split group, we will use (\ref{fix2}) as a definition of the filtration lattices $\fg(\K)_{x,r}$, $r\in \tilde\R$,  and use (\ref{fix3}) as the definition
of the filtration subgroups
$\bG(\K)_{x,r}$, $r\in \tilde \R_{>0}$.

The definition of the parahoric subgroups $\bG(\K)_{x,0}$ for a group that splits over a ramified extension is more complicated, and does not readily translate to Denef-Pas language (which is our main goal in recalling the definitions). We will show below that for our purposes we can replace $\bG(\K)_{x,0}$ with the (in general, larger) set
$(\bG(L)_{x,0})^\Gamma$.

\begin{defn}
Define
$$\fg(\K)_r:=\bigcup_{x\in \scB(\bG, \K)}\fg(\K)_{x, r}, \quad\text{and}
\quad \bG(\K)_r:=\bigcup_{x\in \scB(\bG, \K)}{\bG}(\K)_{x,r}.
$$
\end{defn}
Then the sets $\fg(\K)_r$ and $\bG(\K)_r$ are open and closed, and are both
$\bG(\K)$-domains.

\subsection{Definability}
Here we collect some basic statements about definability (or in one case, almost-definability) in Denef-Pas language of the filtration subgroups (respectively, the corresponding lattices in the Lie algebra) defined above.

\begin{lem}\label{lem:gxr}
Let $x\in {\mathfrak P}_\Psi$ be an optimal point.
Then the sets $\fg(\K)_{x, r}$ and $\fg(\K)_{x, r^+}$ are definable using the parameter
in $z\in Z_{[\Gamma]}$.
\end{lem}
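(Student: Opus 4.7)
The plan is to use Galois descent to reduce to the split case over $L$, then write down the filtration lattices explicitly via the root-space decomposition.

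First I would invoke property \eqref{fix2}, namely $\fg(\K)_{x,r}=(\fg(L)_{x,r})^{\Gamma}$, so that it suffices to (i) define the lattice $\fg(L)_{x,r}\subset \fg(L)$ with parameter $z\in Z_{[\Gamma]}$, and (ii) impose the Galois-fixed-point condition. Step (ii) is straightforward: by the construction in \S\ref{sub:roots}, the twisted action of each $\sigma_i$ is implemented by an $n\times n$ matrix over $\K_{\bar b}$ whose entries are definable functions of $z$ and $\bar b$; the condition $(z\cdot\sigma_i)X=X$ for $1\le i\le m$ is then a finite conjunction of polynomial equations in the coordinates of $X$. Thus the only real content is definability of $\fg(L)_{x,r}$ as a subset of $\fg(L)$, uniformly in the parameter.

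Next I would fix the $\Q$-pinning $(\bB,\bT,\{x_\alpha\})$ of $\bG^{\spl}$ chosen in \S\ref{sub:MP}. This yields an algebraic (hence definable) decomposition
\[
\fg(L)=\ft(L)\oplus \bigoplus_{\alpha\in\Phi^\unr}L\cdot x_\alpha,
\]
and every $X\in\fg(L)$ has unique coordinates $H\in\ft(L)$ and $t_\alpha\in L$. By definition of $\fu(L)_\psi$ in \eqref{eq:Upsi} and the formula for $\fg(L)_{x,r}$, the condition $X\in \fg(L)_{x,r}$ translates into the finite conjunction
\[
H\in \ft(L)_r \ \text{ and }\ \ord(t_\alpha)\ge r-\alpha(x)\ \text{ for every }\alpha\in\Phi^\unr.
\]
(Here I use that for each gradient $\alpha$ the minimal affine root $\psi=\alpha+n$ with $\psi(x)\ge r$ has $n=\lceil r-\alpha(x)\rceil$.) For the torus part, choosing once and for all a finite generating set $\chi_1,\dots,\chi_k$ of $X^*(\bT)$ reduces $H\in\ft(L)_r$ to the finite conjunction $\ord(d\chi_i(H))\ge r$. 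The $r^+$ case is identical, with strict inequalities throughout.

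The main subtlety to handle is that $r\in\tilde\R$ is an a priori real parameter, whereas Denef--Pas language speaks about the value group $\Z$. This is exactly where the hypothesis that $x$ is an optimal point is used: by \S\ref{subsub:OP}, for $x\in\mathfrak P_{\Phi^\unr,\tau}$ the numbers $\alpha(x)$ lie in $\frac{1}{N}\Z$ for a fixed $N$ depending only on $\Psi$. Consequently the relevant sets $\fg(\K)_{x,r}$ change only at values $r\in \frac{1}{N}\Z$, and after rescaling by $N$ all the inequalities $Nr-N\alpha(x)\le \ord(Nt_\alpha)$ and $Nr\le \ord(d\chi_i(H))$ become Presburger-definable conditions on $r$ viewed as an integer parameter (here $\ord$ is implicitly interpreted over $L$, then transported to $\K$ via the fixed $\Z$-basis of $\ri_L$ over $\ri_{\K_f}$). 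Combined with the Galois-fixed-point conditions from step~(i), this produces the desired formula in $\cL_\Z$ with parameter $z\in Z_{[\Gamma]}$. The hard part is really this bookkeeping: one must check that the finite set of auxiliary data (the pinning, the generators $\chi_i$, the denominator $N$, and the matrices realizing the Galois action) can all be chosen uniformly in $[\Gamma]$ and $z$, which is already built into the subassignments $S_{[\Gamma]}$ and $Z_{[\Gamma]}$.
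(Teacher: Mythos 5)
Your proof is correct and follows essentially the same route as the paper: reduce to the split group over $L$ via the Galois-fixed-point description, use the explicit decomposition of $\fg(L)_{x,r}$ into $\ft(L)_r$ plus finitely many non-redundant root lattices $\fu(L)_\psi$, exploit that the values $\psi(x)$ at an optimal point form a field-independent set of rationals, and note that the twisted $\Gamma$-action is by definable linear maps depending on $z$. The extra bookkeeping you supply about $r$ as a parameter is not needed for the lemma as stated (where $r$ is fixed), though it anticipates the uniform-in-$l$ version proved in Corollary \ref{lem:gr}(2).
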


\begin{proof}
Consider  the split case first.
By definition,
$$\fg(\K)_{x,r}=
\ft(\K)_r\oplus \sum_{\{\psi\in \tilde\Phi\mid \psi(x)\ge r\}}\fu(\K)_\psi.$$
Since the set of values $\{\psi(x)\}_{\psi\in \Sigma}$ (where $\Sigma$ is the set of affine roots from the definition of an optimal point) is field-independent by \S \ref{subsub:OP}, the indexing set in the sum is a field-independent set determined by the point $x$;
each set $\fu(\K)_\psi$ is definable by definition, cf.(\ref{eq:Upsi}).
Note that due to natural inclusions between the sets  $\fu(\K)_\psi$ for the affine roots $\psi=\alpha+n$ with the same gradient $\alpha$, the above sum in fact has finitely many non-redundant terms, and the number of these terms is field-independent.

The set $\ft(\K)_r$ is clearly definable. Hence, the sum is definable. The same argument applies to $\fg(\K)_{x,r^+}$.
The non-split case follows from the split case. Indeed, by our definition, $\fg(\K)_{x,r}$ is the set of $\Gamma$-fixed points of $\fg(L)_{x,r}$, which we just proved is definable. The group $\Gamma$ acts by linear transformations (which depend on the parameter in $z\in Z_{[\Gamma]}$); hence, the set of fixed points is definable, using the parameter
$z\in Z_{[\Gamma]}$.
We note that in the split case, a similar lemma was first proved by J. Diwadkar, \cite{diwadkar:thesis}*{Lemma 78}.
\end{proof}

\begin{cor}\label{lem:gr}
\begin{enumerate}
\item Fix $r\in \R$. Then
the sets $\fg(\K)_r$ and $\fg(\K)_{r^+}$
are definable with parameters in $Z_{[\Gamma]}$.
\item If we let $l$ vary, then $\{{\bf 1}_{\fg(\K)_l}\}_{l\in \Z}$ is
a constructible family of motivic functions indexed by $l\in \Z$.
\end{enumerate}
\end{cor}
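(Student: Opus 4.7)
The plan is to reduce each union $\fg(\K)_r=\bigcup_{x\in\scB(\bG,\K)}\fg(\K)_{x,r}$ (a priori over an infinite set of building points) to a definable expression built from the finitely many optimal points provided in \S\ref{subsub:OP}, and then to apply Lemma \ref{lem:gxr}. The key structural input is the standard Moy--Prasad fact (as formulated in \cite{moy-prasad:k-types}, \cite{adler-debacker:bt-lie}) that the collection of filtration lattices $\{\fg(\K)_{x,r}:x\in\scB(\bG,\K)\}$ is $\bG(\K)$-conjugate to the finite collection $\{\fg(\K)_{x_{\mathfrak S},r}:\mathfrak S\in\mathfrak C_\Sigma\}$ attached to optimal points. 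Concretely, every filtration lattice is $\Ad(g)$-conjugate to one centred at an optimal point in $\bar C^{z\cdot\theta}$, because every facet of $\scB(\bG,\K)$ is $\bG(\K)$-conjugate to a facet of $\bar C^{z\cdot\theta}$ and the filtration lattices depend only on the facet containing $x$.

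For part (1), once this reduction is in hand, I would write
\[
\fg(\K)_r \;=\; \bigcup_{\mathfrak S\in \mathfrak C_\Sigma} \bigl\{ X\in\fg(\K) \,:\, \exists\, g\in\bG(\K),\ \Ad(g^{-1})X\in \fg(\K)_{x_{\mathfrak S},r} \bigr\}.
\]
The outer union is over a field-independent finite index set (depending only on $\Phi^{\unr}$ and $\tau$, hence on the parameter $z\in Z_{[\Gamma]}$). Each inner piece is definable because: (a) by Lemma \ref{lem:gxr}, $\fg(\K)_{x_{\mathfrak S},r}$ is definable with parameters in $Z_{[\Gamma]}$; (b) $\bG(\K)$ itself is cut out inside $\GL_n(\K)$ by the definable embedding $\Xi$ from \S\ref{sub:roots}, and the adjoint action $(g,X)\mapsto \Ad(g^{-1})X$ is polynomial, hence definable; (c) an existential quantifier over a definable set preserves definability in Denef--Pas language. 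The same formula, with $r^+$ in place of $r$, handles $\fg(\K)_{r^+}$.

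For part (2), I would simply observe that the formula above is uniform in $r=l$: in Lemma \ref{lem:gxr} the conditions cutting out $\fg(\K)_{x,l}$ come from valuation inequalities of the form $\ord(t)\ge l$ coming from \eqref{eq:Upsi} and $\ord(d\chi(H))\ge l$ on the torus part, together with a field-independent indexing set of affine roots $\psi$ with $\psi(x_{\mathfrak S})\ge l$. Splitting into the finitely many cases according to which inequalities $\psi(x_{\mathfrak S})\ge l$ hold (each case being cut out by a linear condition on $l$), we obtain a single formula with $l$ as a free integer variable whose specialization at each $l\in\Z$ gives $\mathbf{1}_{\fg(\K)_l}$. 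This exhibits $\{\mathbf{1}_{\fg(\K)_l}\}_{l\in\Z}$ as the characteristic function of a definable subassignment of $h[n,0,0]\times h[0,0,1]$ (with $l$ in the valued-field-valuation coordinate), hence as a constructible motivic function in the family indexed by $l$.

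The only point requiring care is the reduction to optimal points: one must be slightly careful because the union $\fg(\K)_r$ is over the building of $\bG$ over $\K$, and the optimal points live in $\bar C^{z\cdot\theta}$; the required statement is that every point of $\scB(\bG,\K)$ is $\bG(\K)$-conjugate to a point of $\bar C^{z\cdot\theta}$ whose filtration lattice at level $r$ coincides with that of some $x_{\mathfrak S}$ (this is exactly the defining property of the optimal set from \S\ref{subsub:OP}, combined with the transitivity of $\bG(\K)$ on alcoves and the fact that $\fg(\K)_{x,r}$ depends only on the face of $x$). Once this reduction is invoked, the existential-quantifier argument makes both definability statements routine.
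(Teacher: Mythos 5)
Your proposal is correct and follows essentially the same route as the paper: the paper likewise reduces $\fg(\K)_r$ to the finite union $\bigcup_{x\in\mathfrak P_{\Phi^\unr,\tau}}{}^{\bG(\K)}\fg(\K)_{x,r}$ of $\bG(\K)$-conjugates of the optimal-point lattices (citing Adler--DeBacker, Lemma 2.3.2 and Remark 3.2.4, for precisely the reduction you single out as the delicate step), and proves part (2) by noting that the defining conditions are inequalities with $l$ on one side of a definable function. The only cosmetic difference is that for $\fg(\K)_{r^+}$ the paper invokes $\fg(\K)_{r^+}=\fg(\K)_s$ for a suitable $s\in\R$ instead of rerunning the same formula with $r^+$.
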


\begin{proof}
\emph{(1).}
By \cite{adler-debacker:bt-lie}*{Lemma 2.3.2, and Remark 3.2.4}, we have:
$$\fg(\K)_r=\bigcup_{x\in \mathfrak P_{\Phi^\unr, \tau}}{}^{\bG(\K)}\fg(\K)_{x,r}.$$
The finite set of optimal points $\mathfrak P_{\Phi^\unr, \tau}$ depends only on the parameter in $Z_{[\Gamma]}$ (more specifically, there are finitely many possibilities for this set, and the specific choice is determined by the definable subset
$Z_{\Phi^\unr, \tau}\subset Z_{[\Gamma]}$ from \S \ref{sub:building} that contains
the cocycle $z$ defining $\bG$).
Then by the previous lemma, $\fg(\K)_r$ is a finite union (indexed by a field-independent set) of definable subsets, and hence, is definable.
For $\fg(\K)_{r^+}$, there exists an $s\in \R$, such that
$\fg(\K)_{r^+}=\fg(\K)_s$ (cf. \cite{adler-debacker:bt-lie}*{Remark 3.2.4}); hence, the second statement follows from the first.

\emph{(2).} Since the set of optimal points is independent of $l$, we only need to show that for an arbitrary optimal point $x\in \scB(\bG, \K)$,
the set $\fg(\K)_{x, l}$ depends on $l$ in a definable way.
Recall that by our definition,
$$\fg(\K)_{x, l}=\left(\ft(L)_l \oplus
\sum_{\{\psi\in \tilde \Phi\mid \psi(x)\ge l\}} \fu(L)_\psi\right)^\Gamma,$$
where $L$ is the Galois extension that splits $\bG$ and $\Gamma=\gal(L/\K)$, as in \S \ref{sub:roots}.
We see directly from the definitions that both the set $\ft(L)_l$, and the
indexing set $\{\psi \in \tilde \Phi\mid \psi(x)\ge l\}$ are defined by inequalities
with $l$ on one side, and  a definable function  on the other, and thus, depend on $l$ in a definable way.
\end{proof}

There are finitely many conjugacy classes of maximal parahoric subgroups in $\bG(L)$, corresponding to the hyperspecial points of $\scB(\bG, L)$.
One would like to prove that parahoric subgroups corresponding to special points in
$\scB(\bG, \K)$ are definable. Here we prove a weaker statement, sufficient for the purposes of his article. As always, when talking about definability, the residue characteristic of $\K$ is assumed to be sufficiently large.
We note that the split case of the first statement of the following lemma first appeared in \cite{diwadkar:thesis}.

\begin{lem}\label{lem:k0}
  Let $x\in \scB(\bG, \K)$ be a   {special point}.
Let $L$ be a finite tamely ramified Galois extension such that
$\bG$ splits over $L$, and $\Gamma =\gal(L/\K)$, as above. Then
\begin{enumerate}
\item The set $K_0:=\bG(L)_{x}^\Gamma$
is a definable (using a parameter in $z\in Z_{[\Gamma]}$) subset of $\bG(\K)$, and
\item the set $K_0$ contains the parahoric subgroup $\bG(\K)_x$, and
there exists a constant $c$ that depends only on the root datum of $\bG$ such that
$[K_0: \bG(\K)_{x}]\le q^c$, where $q$ is the cardinality of the residue field of $\K$.
If $L/\K$ is unramified, then $K_0=\bG(\K)_x$.
\item For every optimal point $x\in \bar C$ as in \S \ref{subsub:OP} above, for every $r>0$, the subgroup
$\bG(\K)_{x,r}$ is definable.
\end{enumerate}
\end{lem}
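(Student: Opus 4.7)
The plan is to reduce all three parts to statements about the split group $\bG(L)$ and then take $\Gamma$-fixed points, using that $\Gamma$ acts by linear transformations depending definably on the parameter $z \in Z_{[\Gamma]}$, exactly as in the proofs of Lemma \ref{lem:gxr} and Corollary \ref{lem:gr}. The real task is thus to exhibit $\bG(L)_{x,r}$ (for $r > 0$ at an optimal point, and for $r = 0$ at a special point) as a definable subset of $\bG(L)$, uniformly in $\Phi^\unr$, $\tau$, and the point $x$.

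For part (3), fix $r > 0$ and an optimal point $x$. After fixing a field-independent total order on $\Phi^\unr$, standard Bruhat--Tits theory yields a bijection, via multiplication,
\[
\bT(L)_r \,\times\, \prod_{\alpha \in \Phi^\unr} U_{\alpha,\, n_\alpha(x,r)} \;\longrightarrow\; \bG(L)_{x,r},
\]
where $n_\alpha(x,r)$ is the smallest integer $n$ with $\alpha(x) + n \geq r$ and $U_{\alpha,n}$ is as in \eqref{eq:Upsi}. Each factor on the left is definable (the torus by its defining character inequalities, each $U_\psi$ by \eqref{eq:Upsi}), and the discrete data $(\alpha(x))_{\alpha \in \Phi^\unr}$ is field-independent by \S\ref{subsub:OP}. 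Since images of definable sets under definable maps are definable (by projection of their graphs), $\bG(L)_{x,r}$ is definable, and then $\bG(\K)_{x,r} = \bG(L)_{x,r}^\Gamma$ is definable with parameter $z \in Z_{[\Gamma]}$, exactly as in Lemma \ref{lem:gxr}.

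For part (1), at $r = 0$ and a special point $x$, $\bG(L)_x$ decomposes into finitely many Bruhat cells indexed by the field-independent spherical Weyl group $W_x$ of the reductive quotient at $x$; each cell is a bijective definable product of the root-group factors $U_\psi$ with $\psi(x) \geq 0$, the torus piece $\bT(L)_0$, and a fixed Weyl-group lift $\dot w$ coming from the integral Chevalley splitting of $\bG^\spl$ fixed in \S\ref{sub:MP}. By the argument of part (3), each cell is definable, so $\bG(L)_x$ is definable as a finite union, and $K_0 = \bG(L)_x^\Gamma$ is definable with parameter $z \in Z_{[\Gamma]}$.

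For part (2), the inclusion $\bG(\K)_x \subseteq K_0$, and the equality when $L/\K$ is unramified, follow from property (\ref{fix3}) (extended to $r = 0$ in the unramified case) together with the fact that a hyperspecial parahoric coincides with the stabilizer of its point. The index bound $[K_0 : \bG(\K)_x] \leq q^c$ follows from the standard description of $K_0/\bG(\K)_x$ via the Kottwitz homomorphism as a subquotient of the algebraic fundamental group $\pi_1(\bG)_I$ (inertia coinvariants), whose order depends only on the absolute root datum, and is therefore bounded by $q^c$ for $c$ sufficiently large in terms of the root datum. The main obstacle is part (1): $\bG(L)_x$ at $r = 0$ is not a clean product but has a finite reductive quotient, so one must pass through the Bruhat decomposition to exhibit it as a finite union of definable pieces, checking that the root ordering and the Weyl-group lifts $\dot w$ are fixed field-independently via the integral Chevalley splitting; for $r > 0$ this subtlety disappears because $\bG(L)_{x,r}$ is itself a bijective product of torus and root-group factors.
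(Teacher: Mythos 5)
Your proposal is correct in substance but takes a genuinely different route from the paper on the key definability step. The paper never invokes a product or cell decomposition: it observes that $\bG(L)_{x,r}$ is by definition \emph{generated} by the definable subgroups $\bT(L)_r$ and the finitely many $U_\psi$ with $\psi(x)\ge r$, and that the Chevalley commutator relations give a uniform, field-independent bound (of order $|\Phi|+\operatorname{rank}\bG$) on the length of the word in these generators needed to express any element; the generated group is then the image of a definable set under a definable map, hence definable, and this handles $r=0$ and $r>0$ in exactly the same way. You instead exhibit $\bG(L)_{x,r}$ for $r>0$ as a direct product of torus and root-group pieces, and at $r=0$ as a finite union of Bruhat cells of the reductive quotient lifted via the pinning. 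Both routes work; yours gives more structural information (definable bijections onto products, which would also be useful for volume computations), while the paper's word-length bound is more robust, needing no exact factorization — so the ``main obstacle'' you identify in part (1) simply does not arise there. For part (2) the paper does not argue at all: it cites \cite{S-T}*{Lemma B.14}. Your sketch via the Kottwitz homomorphism is plausible (and, if carried out, would give a bound on the index independent of $q$, since the image of the compact group $K_0$ in $\pi_1(\bG)_I^\sigma$ lies in its torsion subgroup), but as stated it is loose: $\pi_1(\bG)_I$ need not be finite; the inclusion $\bG(\K)_x\subseteq K_0$ is precisely the nontrivial compatibility of parahorics under tame base change, not a consequence of property (\ref{fix3}), which the paper asserts only for $r>0$ in the ramified case; and identifying the parahoric with $\operatorname{Fix}(x)\cap\ker\kappa_G$ is itself a theorem (Haines--Rapoport). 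These points are fixable, but they are exactly what the cited lemma of \cite{S-T} takes care of.
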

\begin{proof} The proof of (1) is almost identical to the proof of
  Lemma \ref{lem:gxr} above. We start with the split case, and examine the definition
of $\bG(L)_x$. This subgroup depends only on the facet that contains $x$; and the values
$\psi(x)$ of the affine roots are   {rational numbers} (independent of the field) determined by the facet.  Thus we have a finite, field-independent set of definable
subgroups $U_\psi$, and a definable subgroup $T(L)_r$. To show that $\bG(L)_x$, which, by definition, is generated by these subgroups, is definable,  it remains to observe that there is a uniform bound on the length of the word of generators required to write down every element. In fact, it follows from Chevalley commutator relations that this length is bounded by $|\Phi|+r$, where $r$ is the absolute rank of the group $\bG$. 
Hence, $\bG(L)_x$ is definable.
Then $\bG(L)_x^\Gamma$ is definable, using the parameter in $z\in Z_{[\Gamma]}$, since $\Gamma$ acts by definable automorphisms.

The statement (2) is Lemma B.14 in \cite{S-T}.

The proof of the statement (3) in the split case (i.e., for $\bG(L)$)
is identical to the proof Part 1 of Lemma \ref{lem:gr} above, with
the lattices $\fu(L)_\psi$ replaced with the subgroups $U_\psi$, and using the same remark about the bound on the length of the word of generators as in part (1).
The statement for $\bG(\K)_{x, r}$ follows immediately from the split case, since we assume that
$r>0$, by the condition (3) in \S \ref{sub:MP}.
\end{proof}

\subsection{Haar measures}\label{sub:HM}
The functions representing the distributions that we study in this paper depend on the choice of the normalizations of the Haar measures on $\bG(\K)$ and $\fg(\K)$.
However, we note that the questions we are interested in, namely, those of local integrability and local boundedness, are not sensitive to scaling by a constant, hence, any normalization of Haar
 measure that makes it a specialization of a motivic measure will work for our purposes.
Let us describe some aspects of motivic measures related to differential forms and our choices for the normalizations.
 
\subsubsection{}\label{defdif} Given a definable subassignment and a definable differential form on it, there is an associated motivic measure, see \cite{cluckers-loeser}*{\S 8} and \cite{cluckers-loeser:mixed}*{{\S 12.3}}. Let us explain how this works, focusing on what we need in this paper, namely uniformity in $\K$ and in families. A definable differential $d$-form $\omega$ on the affine space $\K^N$ with the coordinates $x_1,\ldots,x_N$ is given by a finite sum of terms of the form $f dx_{i_1}\wedge \dots\wedge dx_{i_d}$ for $0<i_1<\ldots< i_d\leq N$ and where the coefficients $f$ are definable functions from $\K^N$ to $\K$. For a $d$-dimensional $\K$-analytic definable submanifold $A\subset \K^N$, such a $d$-form $\omega$ gives a measure on $A$, usually denoted by $|\omega|$, since any definable function is $\K$-analytic away from a definable set of smaller dimension. This construction goes back to Weil and is detailed in
\cite{Bour}. Here we  explain how to think of the measures defined by volume forms  
uniformly in $\K$ and in families. 

By Lemma 11.3 of \cite{cluckers-loeser:mixed}, and by observing the construction in its proof, the following holds for any definable set $A\subset \K^N$ of dimension $d$. There exist an integer $s\geq 0$ and a definable bijection $f:A'\subset \rf_\K^s\times A\to A$, induced by the coordinate projection, such that for each $\xi\in \rf_\K^s$, the fiber $A_\xi':=\{x\mid (\xi,x)\in A'\}$ is a $\K$-analytic manifold of dimension at most $d$ for which there is a definable, isometric isomorphism of $\K$-analytic manifolds $\iota:A'_\xi\to B_\xi\subset \K^d$ which is induced by one of the coordinate projections $\K^N\to \K^d$. Clearly there is a definable differential $d$-form $\omega_\xi$ on $\K^d$ whose restriction to $B_\xi$ coincides with the pullback of the restriction of $\omega$ to $A'_\xi$ under $\iota^{-1}$.  Now $\omega_\xi$ is of the form $fdx_1\wedge\ldots \wedge dx_d$ for some definable function $f$, and a measure can be defined on $\K^d$ (and hence on each of the $B_\xi$ and the  $A'_\xi$), by defining the measure of an open set $U$ in $\K^d$ as the integral of $|f|$ over $U$ against the Haar measure on $\K^d$, normalized so that the unit ball has measure $1$. This gives the measure $|\omega|$ on $A$. This construction of forms and measures works uniformly in families and also when $\K$ varies, and corresponds to the motivic treatment of forms and measures in \cite{cluckers-loeser}*{\S 8} and \cite{cluckers-loeser:mixed}*{{\S 12.3}}.

\subsubsection{}\label{subsub:invforms}
For a split connected reductive group $ {\bG^\spl}$, one can explicitly write down a definable differential form, {which we denote by $\omega^\spl$}, that gives rise to a Haar measure on
$ {\bG^\spl(\K)}$ (see e.g. \cite{cluckers-hales-loeser}*{\S 7.1}).
For a non-split reductive group, we define the invariant differential form on it by pull-back from 
its split form, using the same construction as Gross uses for an inner form, cf.\cite{gross:motive}*{(4.8)}. Since here we are working with not necessarily inner forms, we need to generalize this construction slightly to allow the volume form to have coefficients in an extension of the field $\K$, and in the end we do not generally get the canonical measure of Gross.

As in \S \ref{sub:roots}, we think of a general connected reductive group $\bG$ as a fibre 
$\bG=\bG_z$ of a definable subassignment (constructed in \S \ref{sub:roots})
over a point $z\in Z_{[\Gamma]}$. Recall that the subassignment $Z_{[\Gamma]}$ specializes to
the set of cocycles that give rise to forms of a given split reductive group.
For every such form $\bG_z$ with $z\in Z_{[\Gamma]}$, we have an isomorphism
$\psi_z:L_z\otimes_\K \bG_z \to L_z\otimes_\K \bG^\spl$, where $L_z$ is the finite extension over which $\bG_z$ splits. Being an isomorphism of algebraic groups (defined over $L_z$), the map
$\psi_z$ is definable, using $z$ as a parameter  (recall that as discussed in \S \ref{sub:roots}, using the parameter $z$ allows us to use the elements of $L_z$ in all Denef-Pas formulas).

At the identity, the map $\psi_z$ induces an isomorphism of $1$-dimensional vector spaces (which we denote by the same symbol)
$$\psi_z:\wedge^{\dim \bG}(L_z\otimes_\K \fg_z) \to \wedge^{\dim \bG}(L_z\otimes_\K \fg^\spl),$$
defined over $L_z$ and which is, clearly, definable using the parameter $z$. (We observe that
in the case $\psi_z$ is an inner twisting, this isomorphism is actually defined over $\K$
(cf. \cite{laumon:Drinfeld-modules}, pp.68-69), but we do not need this fact here.)
Similarly, for every $x\in \bG(\K)$,  the map $\psi_z$ induces an isomorphism (over $L_z$) of 
$\wedge^d T_x(\bG)\otimes_\K L_z$ and $\wedge^d T_x(\bG^\spl)\otimes_\K L_z$, where $T_x$ denotes the tangent space at $x$. We still denote this isomorphism by $\psi_z$.

Let $\omega_z=\psi_z^\ast(\omega^\spl)$. Then $\omega_z$ is a non-vanishing (since $\psi_z$ is an isomorphism at every $x\in \bG(\K)$) top-degree differential form on $\bG(\K)$, of the form
$f(x_1, \dots x_d) dx_1\wedge \ldots\wedge dx_d$ in any coordinate chart  on $\bG(\K)$, 
with $f$ a regular $L_z$-valued function on the corresponding coordinate chart, whose coefficients are definable functions of $z$. Finally, $\omega_z$ is $\bG(\K)$-invariant, since it comes from 
a $\bG(L_z)$-invariant form on $\bG_z\otimes L_z$. 
We can define the measure $|\omega|_{L_z}$ on $\bG(\K)$ associated with the form $\omega_z$ by taking the $L_z$-absolute value of the function $f(x_1, \dots, x_d)$  on each coordinate chart as above, as in \S \ref{defdif}, except replacing the $\K$-absolute value with its unique 
extension to $L_z$.

We see that $(\omega_z)_{z\in Z_{[\Gamma]}}$ is a  constructible family of definable differential forms on the fibres $\bG_z$. 
Therefore, the measures $|\omega_z|_{L_z}$ form a family of 
(specializations of) motivic measures. 
These will be the Haar measures we consider on the groups $\bG_z(\K)$.

\begin{rem} We observe that the resulting Haar measure coincides with the canonical measure defined by Gross \cite{gross:motive} in the case when $\bG$ splits over an unramified extension, cf. \cite{cluckers-hales-loeser}*{\S 7.1},  {as well as in the case when $\bG$ is a non-quasi-split inner twist of $\bG^\spl$, by definition in \cite{gross:motive}};
in the  {quasi-split} ramified case this relationship still needs to be better understood, cf.
\cite{S-T}*{\S B.5.2}.
\end{rem}

\subsubsection{Integrability of motivic functions}
In this article, we occasionally integrate motivic (exponential) functions. We do that without further comment only in the situation when such integration amounts to integration of a 
function which is clearly integrable, such as a Schwartz-Bruhat function in the sense of \cite{cluckers-loeser:fourier}*{\S 7.5}, or a product of a Schwartz-Bruhat function and an additive character.
Such functions are known to be integrable in the sense of motivic integration by the results of
\cite{cluckers-loeser:fourier}, and the convergence of the $p$-adic integrals of their specializations is clear.
In \cite{TI} we prove much stronger results about integrability for motivic functions, allowing us to handle questions of integrability of their specializations
in the sense of $L^1$; here every time there is any issue with the convergence of the integral, we include a careful discussion, and invoke the corresponding transfer principles from \cite{TI}.

\section{Orbital integrals as ``motivic distributions''}

\subsection{The linear dual of $\fg$, and assumptions on $p$}\label{sub:hypo}

Let $\fg(\K)^\ast$ denote the linear dual of $\fg(\K)$.
In \cite{moy-prasad:k-types}*{\S 3.5}, Moy and Prasad define a filtration of
$\fg(\K)^\ast$ by lattices $\fg(\K)_{x,r}^\ast$, where $x$ is a point in the building
$\scB(\bG, \K)$, and $r$ is a real number, by
$$\fg(\K)_{x,r}^\ast=\{\lambda \in \fg(\K)^\ast \mid
\lambda(\fg(\K)_{x, (-r)^+})\subset \mathfrak p_\K\}.$$

{We will need  a particularly nice non-degenerate bilinear form on
$\fg(\K)$; its existence is guaranteed by \cite{adler-roche:intertwining}*{Proposition 4.1}.
We quote this proposition here omitting the details about the list of bad primes. }
\begin{prop}(\cite{adler-roche:intertwining}*{Proposition 4.1})\label{subsub:hy2}
If the characteristic of $\K$ is outside a certain finite list of primes  determined by the root datum of $\bG$, then
there exists a {$\K$-valued}, non-degenerate, bilinear, $\bG(\K)$-invariant, symmetric form
$\langle, \rangle$ on $\fg(\K)$ such that, under the associated identification of $\fg(\K)$
with $\fg(\K)^\ast$, for all $x\in \scB(\bG, \K)$ and all $r\in \R$,   {the lattice}
$\fg(\K)_{x, r}$  {is identified with} $\fg(\K)_{x,r}^\ast$.
\end{prop}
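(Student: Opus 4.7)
The plan is to construct the desired bilinear form $\langle\,,\,\rangle$ piece-by-piece on the canonical decomposition of $\fg$, and then verify the Moy-Prasad self-duality by a direct root-space calculation in a pinning. Since we are in large characteristic, we may use the decomposition $\fg = Z(\fg)\oplus [\fg,\fg]$ and further decompose $[\fg,\fg]$ into simple factors (possibly after passing to the splitting field $L$). It therefore suffices to build a form on each simple factor and on the center, and then take an orthogonal sum; after that, Galois descent relative to $\Gamma=\gal(L/\K)$ produces the $\K$-valued form.

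\medskip

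First, I would construct the form on each simple factor of $\fg\otimes_\K L$. The natural candidate is the Killing form, which in characteristic zero is non-degenerate and $\bG$-invariant; the ``finite list of primes'' in the hypothesis is precisely designed so that (after rescaling) this Killing form remains non-degenerate and integral on the Chevalley $\Z$-form. For the few simple types where the Killing form degenerates in finite characteristic (e.g.\ type $A_{n-1}$ when $p\mid n$), I would instead take the trace form $\langle X,Y\rangle=\tr(\rho(X)\rho(Y))$ associated to a convenient faithful rational representation $\rho$ (the standard representation suffices in all classical cases, and for the exceptional cases one exhibits a specific $\rho$ case-by-case); invariance is automatic, and non-degeneracy is checked in a Chevalley basis against a list of primes that must be excluded. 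On the center $Z(\fg)$, which is an abelian Lie algebra and splits over $L$, I pick any non-degenerate symmetric form whose Gram matrix lies in $\ri_L^\times$ in a fixed $\ri_L$-basis adapted to the cocharacter lattice. The orthogonal sum of these forms is $\bG(L)$-invariant and non-degenerate.

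\medskip

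Next, I would verify self-duality of the Moy-Prasad lattices. Work in a pinning $({\bf B},\bT,\{x_\alpha\})$ of $\bG^{\spl}$ and write $\fg(L) = \ft(L)\oplus\bigoplus_{\alpha\in\Phi}\fg(L)_\alpha$. By $\bG$-invariance $\langle\fg_\alpha,\fg_\beta\rangle=0$ whenever $\alpha+\beta\neq 0$, so the pairing splits as (i) a pairing of $\ft(L)$ with itself and (ii) pairings of $\fg_\alpha$ with $\fg_{-\alpha}$. For part (ii), the normalization of the form on each simple factor can be chosen so that $\langle x_\alpha,x_{-\alpha}\rangle\in \ri_L^\times$ for every root $\alpha$; then for an affine root $\psi=\alpha+n$, the lattice $\fu(L)_\psi = \varpi_L^n\cdot Lx_\alpha\cap \fg$ pairs with $\fu(L)_{-\alpha-m}$ into $\ri_L$ exactly when $n+m\geq 0$, i.e.\ exactly when $\psi(x)\geq r$ is paired with $(-\alpha-m)(x)>-r$, which is precisely the Moy-Prasad duality. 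For part (i), using the identification $X^\ast(\bT)\otimes L \xrightarrow{\sim} \ft^\ast(L)$ via differentials $d\chi$, the definitions of $\ft(L)_r$ and of $\ft(L)_{x,r}^\ast$ translate into the same inequality on $\ord$, modulo a uniform rescaling that is absorbed into the choice of form. Summing the pieces yields $\fg(L)_{x,r}^\vee=\fg(L)_{x,(-r)^+}^\ast$ as required over $L$.

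\medskip

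Finally, I would descend to $\K$. Since the form was built from data (Killing form, pinning, trace form of a representation) all defined over $\Z$, it is Galois-invariant under $\Gamma=\gal(L/\K)$, hence takes values in $L^\Gamma=\K$ on the $\Gamma$-fixed subspace $\fg(\K)\subset\fg(L)$; non-degeneracy on $\fg(\K)$ follows because the form is non-degenerate on $\fg(L)$ and $\Gamma$-equivariant (the pairing $\fg(\K)\otimes\fg(\K)\to L$ factors through $\K$ and cannot have a nontrivial kernel without contradicting $L$-non-degeneracy by averaging). The Moy-Prasad self-duality over $\K$ then follows from taking $\Gamma$-fixed points of the $L$-statement, using the tameness hypothesis and the properties $\fg(L)_{x,r}^\Gamma=\fg(\K)_{x,r}$ recalled in \S\ref{sub:MP}. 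The \emph{main obstacle} is the bookkeeping in step two: one must track the normalizing scalars so that a single form (not a different form for each simple factor) simultaneously pairs \emph{every} affine root lattice and every cocharacter lattice self-dually, which forces the exclusion of the finite set of bad primes in the hypothesis; for types where no Chevalley-basis rescaling works, this is what necessitates switching from Killing form to trace form.
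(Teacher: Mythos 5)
You cannot really be graded against ``the paper's own proof'' here, because the paper does not prove this proposition: it is quoted verbatim from Adler--Roche (Proposition 4.1 of \cite{adler-roche:intertwining}), and the only feature of the proof that the paper actually uses is the remark, made immediately after the statement, that the construction is explicit enough that the resulting form is definable. That said, your sketch does follow the same strategy as the cited source: reduce to the centre plus (almost-)simple factors, take a suitable trace-type form on each factor, verify self-duality of the Moy--Prasad lattices root space by root space over the splitting field, and descend by the tame Galois action using $\fg(L)_{x,r}^\Gamma=\fg(\K)_{x,r}$.

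Three points in your write-up need repair before this would count as a proof. First, the duality you must verify is the one defined in \S 3.1 of the paper, namely $\fg(\K)_{x,r}^\ast=\{\lambda\mid\lambda(\fg(\K)_{x,(-r)^+})\subset\mathfrak p_\K\}$: the pairing must land in $\mathfrak p_\K$ against the lattice $\fg_{x,(-r)^+}$ with the \emph{strict} inequality. Your condition ``pairs into $\ri_L$ exactly when $n+m\ge 0$'' computes the $\ri$-dual of $\fg_{x,r}$ against $\fg_{x,-r}$, which is an off-by-one variant of the required statement; the correct bookkeeping does close up, but only because the strict inequality in $(-r)^+$ exactly compensates the shift from $\ri$ to $\mathfrak p$. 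Second, your parenthetical suggestion that type $A_{n-1}$ with $p\mid n$ can be rescued by switching from the Killing form to a trace form is wrong: in that case the centre of $\Sl_n$ lies inside $[\Sl_n,\Sl_n]$ and is therefore orthogonal to all of $\Sl_n$ under \emph{any} invariant form, so no non-degenerate invariant form exists; these are precisely primes that must go on the excluded list, not cases where a cleverer $\rho$ helps. Third, for the descent step to work for \emph{outer} forms, the form on $\fg^{\spl}(L)$ must be invariant under the twisted Galois action $z\cdot\sigma$, hence under the full automorphism group of $\bG^{\spl}$ and not merely under inner automorphisms; this is automatic for the Killing form but must be checked for a trace form $\tr(\rho(X)\rho(Y))$ (one needs $\rho\circ\sigma$ equivalent to $\rho$ or to $\rho^\vee$ for each diagram automorphism $\sigma$). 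None of these is a fatal flaw of strategy, but all three are exactly the kind of detail the proposition's ``finite list of primes'' and the reference to Adler--Roche are there to absorb.
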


Note that the proof of \cite{adler-roche:intertwining}*{Proposition 4.1} is, in fact, constructive, so we can use the form constructed in the proof from now on to
identify $\fg(\K)^\ast$ with $\fg(\K)$.
This allows us to think of $\fg(\K)^\ast$ as a definable set, identical to $\fg(\K)$.
Recall that in the beginning, we have fixed an embedding of $\fg^\spl$ into $\gl_n$, defined over $\Q$; that leads to a consistent choice of coordinates on all its forms $\fg=\fg_z$, so let us, once and for all, fix the coordinates on the linear space $\fg(\K)$.
Let $X=(x_i)_{i=1}^d$ with respect to these coordinates, where $d$ stands for $\dim \bG$.
Examining the proof of \cite{adler-roche:intertwining}*{Proposition 4.1},
we observe that for $X, Y\in \fg(\K)$, the value
$\langle X, Y \rangle$ is a definable function of the coordinates
of $X$ and $Y$ (using $z$ as a parameter).
Thus, to us, $\fg(\K)^\ast$ is the same definable set as $\fg(\K)$, with the same Haar measure,  though we
keep the $\ast$ for the convenience of interpretation,  and sometimes denote this set
by $\fg^\ast(\K)$.

\subsubsection{Nilpotent orbits}
In the rest of this paper, we will need to make
assumptions on the characteristic of the field, which guarantee that the nilpotent orbital integrals are sufficiently well-behaved.

For an element of a Lie algebra $\fg(\K)$,
there are several definitions of ``nilpotent'' in the literature; it turns out that for large $p$ (or in characteristic zero) they are all equivalent, but we will not use this fact here.
We adopt the same definition as in \cite{debacker:nilp}, namely,
we call an element $X\in \fg(\K)$ \emph{nilpotent} if there exists
$\lambda\in  X_{\ast}^\K(\bG)$ such that
$$\lim_{t\to 0}\Ad(\lambda(t)) X = 0.$$
Following DeBacker, we denote by $\cO(0, \K)$ the set of orbits of nilpotent
elements.

\subsubsection{The assumptions on $p$}
Everywhere from now on, we need to assume that the characteristic of the field
is sufficiently large so that all of the following conditions hold:
\begin{enumerate}
\item there are finitely many nilpotent orbits in $\fg$;
\item the nilpotent orbital integrals are distributions on $\fg(\K)$;
\item the bilinear form from Proposition \ref{subsub:hy2} exists.
\end{enumerate}
It is proved in \cite{mcninch:nilpotent} and also \cite{debacker:homogeneity} that
 when $p$ is larger than some constant that can be computed from the absolute
 root  datum of $\bG$, the first two conditions hold.
 Let $M_\Psi^\nilp$ denote the constant such that for
 $p\ge M_\Psi^\nilp$
the above conditions hold.
We also enlarge $M_\Psi^\nilp$ if necessary, and assume that $M_\Psi^\nilp \ge M_\Psi$,
where $M_\psi$ is the constant of \S \ref{sub:roots}.

\subsection{Orbital integrals as motivic distributions}\label{sub:noi}
Let $X\in \fg(\K)$. The definition of an orbital integral $\Phi_X$, which is a distribution on the space $C_c^\infty(\fg(\K))$, is recalled in \S \ref{app:oi}.
In this paper, rather than use the approach to the definition of an orbital integral that requires us to fix  a Haar measure on $\bG(\K)$ and a Haar measure on the centralizer $C_G(X)$, we will
use  a specific $\bG(\K)$-invariant differential form on the orbit of $X$. We start by recalling the construction of such a form, that goes back to Kirillov. The version we use here is quoted from \cite{kottwitz:clay}*{\S 17.3}.
\subsubsection{Invariant volume forms on orbits}
Let $\langle, \rangle$ be the (definable) non-degenerate, symmetric, $\bG(\K)$-invariant
bilinear form on $\fg(\K)$ from Proposition \ref{subsub:hy2}.
We use this form to identify $\fg(\K)$ with its linear dual $\fg^\ast(\K)$, as above; this also identifies adjoint orbits with co-adjoint orbits, since the form $\langle, \rangle$ is $\bG(\K)$-invariant.

Let $X\in\fg(\K)$, and let $\cO_X$ be the adjoint orbit of $X$.
The orbit $\cO_X$, as a $p$-adic manifold, is identified with $\bG(\K)/C_G(X)$, and its tangent space at $X$ is identified with $\fg/\fg_X$, where, with our identification of $\fg$ and
$\fg^\ast$,
\begin{equation}\label{gx}
\fg_X=\{Y\in \fg\mid \langle X, [Y,Z]\rangle=0 \text{ for all } Z\in \fg\}.
\end{equation}
Consider the alternating form
\begin{equation}\label{eq:w}
\omega_X(Y, Z)=\langle X, [Y, Z]\rangle
\end{equation}
on  $\fg$.
This form descends to a non-degenerate alternating form on $\fg/\fg_X$, \cite{kottwitz:clay}*{\S 17.3}, and therefore, it gives a symplectic form on the tangent space to $\cO_X$ at $X$.
Let $\omega$ be the symplectic form on $\cO_X$  defined by $\omega(X)=\omega_X$.
Then $\omega$ is a non-degenerate symplectic form on $\cO_X$.
Note that the form $\omega$ depends only on the choice of the bilinear form $\langle, \rangle$.
It follows that the orbit $\cO_X$ is a symplectic manifold, and in particular, its dimension is even. Let $\dim \cO_X=2m$.
Consider the form $\nu_m=\wedge^m \omega$ -- it is a non-vanishing $\bG(\K)$-invariant top degree differential form on
$\cO_X$, that is, a volume form, see \cite{kottwitz:clay}*{\S 17.3}.

\begin{defn}
In this paper, we denote by $\Phi_X$ the  orbital integral (as a distribution)
over the orbit of $X$ equipped with the measure $|d\nu_m|$, where $\nu_m$ is the volume form on
$\cO_X$ as above (we assume that the bilinear from of Proposition \ref{subsub:hy2} is fixed once and for all), and $m=\frac12\dim\cO_X$.
\end{defn}



The next proposition states that for every integer $m$, $1\le m \le (\dim \bG-\operatorname{rank}\bG)/2$, the volume forms
$\nu_m$ defined above form a family of definable volume forms on orbits of dimension $2m$. As a corollary, we obtain that
the orbital  integrals $\Phi_X$ form a family of ``motivic distributions'', in the sense that the result of the application of $\Phi_X$ to any constructible family of definable test functions is a motivic  function in $X$ and the parameters indexing the family. We note that a similar statement (just for semisimple elements, and with less detail) appears also in \cite{S-T}*{\S B.5.3}.

\begin{prop}\label{prop:oii}
Fix an integer $m$, $1\le m \le (\dim \bG-\operatorname{rank} \bG)/2$. Then for the fields $\K$ of sufficiently large residue characteristic, the set of elements $X\in \fg(\K)$ such that $\dim \cO_X=2m$ is definable, and
by restricting the form $\nu_m$ to the orbits $\cO_X$ of dimension $2m$, we obtain a family  of definable
 {$\bG(\K)$-invariant volume} forms on these orbits.
\end{prop}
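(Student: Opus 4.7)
The plan is to verify the two claims of the proposition separately: definability of the dimension stratum, and then definability of the family of Kirillov volume forms.

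For the definability of the stratum $\{X\in\fg(\K)\mid \dim\cO_X=2m\}$, I would use that $T_X\cO_X=\operatorname{im}(\ad X\colon\fg\to\fg)$, so $\dim\cO_X=\operatorname{rank}(\ad X)$ (for large $p$, where the scheme-theoretic centralizer is smooth). In the fixed coordinates on $\fg$ chosen in \S\ref{sub:hypo}, the matrix of $\ad X$ has entries polynomial in the coordinates of $X$, with coefficients depending definably on the parameter $z\in Z_{[\Gamma]}$. The rank condition $\operatorname{rank}(\ad X)=2m$ is then a Boolean combination of the vanishing of all $(2m{+}1)\times(2m{+}1)$ minors and the non-vanishing of at least one $2m\times 2m$ minor, and is therefore definable uniformly in $z$.

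For the second claim I would apply the machinery of \S\ref{defdif} to the definable $2m$-dimensional submanifold $\cO_X\subset\fg(\K)$. By Lemma 11.3 of \cite{cluckers-loeser:mixed}, after decomposing into pieces indexed by residue-field parameters, each piece is isometrically identified with a subset of $\K^{2m}$ via a coordinate projection $\iota$. In such a chart at a point $Y\in\cO_X$, a basis $u_1,\ldots,u_{2m}$ of $T_Y\cO_X\subset\fg$ is read off definably from the Jacobian of $\iota^{-1}$. Since $u_i\in T_Y\cO_X=\operatorname{im}(\ad Y)$, one can solve the linear system $\ad Y\cdot A_i=u_i$ definably (by case-splitting on which $2m\times 2m$ minor of the matrix of $\ad Y$ is nonzero and applying Cramer's rule on that block) to obtain vectors $A_i\in\fg$ such that $[A_i,Y]=u_i$. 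The density of $\nu_m=\omega^{\wedge m}$ in the chart is then given by the Pfaffian formula
\[
\omega^{\wedge m}_Y(u_1,\ldots,u_{2m})=m!\cdot\operatorname{Pf}\bigl(\langle Y,[A_i,A_j]\rangle\bigr)_{1\le i,j\le 2m},
\]
which is a polynomial expression in the definable data, since the bracket is polynomial and $\langle,\rangle$ is definable by Proposition \ref{subsub:hy2} and the remarks following it. The $\bG(\K)$-invariance of the resulting volume forms is inherited from the $\bG(\K)$-invariance of $\langle,\rangle$ and the equivariance of the bracket, and non-degeneracy on each orbit is the content of the Kirillov--Kostant construction recalled in \cite{kottwitz:clay}*{\S 17.3}.

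The main technical obstacle is ensuring that these linear-algebra steps — selecting a $2m\times 2m$ invertible minor of $\ad Y$, solving the linear system by Cramer's rule, and extracting tangent-space data from the local parametrization of $\cO_X$ — are carried out uniformly and definably across the entire stratum. This is resolved by a finite case-split on the possible choice of non-vanishing minor, which is compatible with the piece-decomposition supplied by \S\ref{defdif} and introduces no non-definable input. A minor secondary point is the choice of representative $A_i$: different valid choices yield the same Pfaffian value, because $\langle Y,[A,B]\rangle=0$ whenever $A\in\fg_Y$ (by invariance of $\langle,\rangle$ and skew-symmetry of $\ad Y$), so the formula is well-defined.
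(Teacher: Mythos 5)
Your proof is correct, and it is worth recording how it differs from the paper's. On the first claim the two arguments are interchangeable: the paper cuts out the stratum by the condition $\dim\fg_X=\dim\bG-2m$, expressed by the existence of a basis of $\fg_X$ of the right cardinality, and since $\fg_X=\ker(\ad X)$ (by invariance and non-degeneracy of $\langle,\rangle$) this is exactly your minor-rank criterion for $\ad X$. On the second claim you take a genuinely different route. The paper promotes $\omega_X(Y,Z)=\langle X,[Y,Z]\rangle$ to a global $2$-form $\omega$ on the ambient affine space $\fg$, with coefficients linear in the coordinates of $X$, forms $\nu_m=\wedge^m\omega$ there, and then simply \emph{restricts} this definable ambient form to the definable family of orbits; definability is then immediate from \S\ref{defdif}, with no charts, linear systems, or Pfaffians. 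You instead compute the Kirillov density directly in the charts supplied by \S\ref{defdif}: lift a tangent frame through $\ad Y$ by Cramer's rule on a non-vanishing $2m\times 2m$ minor and evaluate $\operatorname{Pf}\bigl(\langle Y,[A_i,A_j]\rangle\bigr)$, with the independence of the choice of lifts checked exactly as you do. The paper's route buys brevity; yours buys the correct normalization on every orbit. Indeed, the literal restriction of the ambient form differs from the Kirillov form of \S 3.2.1 by the factor $\det\bigl(\ad(Y)\vert_{T_Y\cO_X}\bigr)$ (one form is the pullback of the other under $\ad Y$ on the tangent space), which is a harmless nonzero constant on each regular semisimple orbit but vanishes identically on non-semisimple ones: for the regular nilpotent orbit of $\Sl_2$ the restricted form at $e$ evaluates to $\langle e,[h,e]\rangle=2\tr(e^2)=0$. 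Since the nilpotent orbits are precisely the ones needed later (for the local character expansion and Theorem \ref{thm:char}), your chart-by-chart construction, which produces the genuine Kirillov--Kostant form $\nu_m$ of \S 3.2.1 as a definable family on all of $V_{d-2m}$, is the safer reading, at the cost of the extra linear algebra.
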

\begin{proof}
First, observe that the vector spaces $\fg_X$ of (\ref{gx}) form a family of definable sets indexed by $X$, as is clear from their definitions, since the form $\langle, \rangle$ is definable, as discussed below Proposition \ref{subsub:hy2}.
Now, for every integer $k$ between the rank and the dimension of $\bG$ of the same parity as $\dim \bG$, let
Let $V_k=\{X\in \fg(\K)\mid \dim \fg_X=k \}$. The  sets $V_k$ partition $\fg(\K)$, and they are definable since
for every positive integer $k$, the statement $\dim \fg_X=k$ can be expressed by means of Denef-Pas language formulas stating that there exists a collection of $k$ vectors forming a basis of the linear space $\fg_X$
(cf. \cite{gordon-hales:transfer}).

Let $\omega$ be the differential $2$-form on $\fg(\K)$, which, at every $X$, coincides with
the alternating form $\omega_X$ of (\ref{eq:w}), viewed as an element of $\wedge^2 \fg(\K)^*$.
Recall that we have chosen the coordinates $(x_i)_{i=1}^d$ on $\fg(K)$ (see the discussion after
Proposition \ref{subsub:hy2}).
We observe that the coefficients of the form $\omega$  are
linear functions of $x_i$, with coefficients that are definable $\K$-valued functions of the parameter $z$, since the form $\langle, \rangle$ is definable using  $z$ as a parameter.
For every integer  $m\le (\dim\bG -\operatorname{rank}\bG)/2$, consider the $2m$-form $\nu_m:=\wedge^m\omega$ on $\fg$.
Then it is a definable $2m$-form on the definable subassignment $\fg$, and it has the form
$\nu_m=\sum_S f_S \wedge_{i\in S} dx_i$, where $S$ runs over the subsets of cardinality $2m$ of $\{1, \dots, n\}$, and $f_S$ are polynomials with coefficients that are definable functions of $z$. 

By restricting the definable form $\nu_m$ from $\fg$ to the definable family of orbits of elements of $V_{d-2m}$, we obtain a family (indexed by $X$ and $z$) of definable $2m$-forms on the orbits $\cO_X$ with $X\in V_{d-2m}$.
By \cite{kottwitz:clay}*{\S 17.3} (as summarized above), the specialization of the definable
form $\nu_d$ to $\K$ coincides with a $\bG(\K)$-invariant volume form on the orbit of $X$.
Thus, we obtain that the measures on the orbits that we have defined above come  from a family of definable volume forms.
\end{proof}

\begin{cor}\label{prop:noi}
Given a family of motivic exponential test functions
$\{f_a\}_{a\in S}$, such that $f_{a, \K}\in C_c^\infty(\bG(\K))$,
where $S\in \de$ is some definable subassignment,
there exists a constant $M>0$, and a motivic exponential function $F$ on
$\fg\times S$, such that for all local fields $\K\in \cA_M\cup \cB_M$, for every
$X\in \fg_{\K}$,
we have 
$$
\Phi_{X}(f_{a, \K})= F_\K(X, a) \quad \text{for all } a\in S_\K.
$$
(Note that naturally, both $F$ and $M$ depend on the family $\{f_a\}_{a\in S}$).
\end{cor}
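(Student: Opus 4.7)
The plan is to express $\Phi_X(f_{a,\K})$ as a motivic integral of a constructible motivic exponential function over a constructible family of $p$-adic manifolds equipped with a constructible family of volume forms, and then invoke the Cluckers--Loeser integration theorem, which says that fibrewise integration of a motivic exponential function against a constructible family of motivic measures produces a motivic exponential function in the remaining parameters.

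First, I would stratify $\fg$ by the definable partition $V_k = \{X \in \fg(\K) : \dim \fg_X = k\}$ introduced in the proof of Proposition \ref{prop:oii}, reducing to the case where $X$ ranges over a single stratum $V_{d-2m}$ on which all orbits have dimension $2m$. By Proposition \ref{prop:oii}, the forms $\nu_m$ restrict to a constructible family of definable invariant volume forms on these orbits. Moreover, the orbit $\cO_X$ is the image of the definable morphism $\bG \times V_{d-2m} \to \fg$, $(g, X) \mapsto \Ad(g) X$, so $\{\cO_X\}_{X \in V_{d-2m}}$ is itself a definable family of subassignments of $\fg$; combined with the previous step this yields a constructible family of motivic measures on the orbits, parametrised by $X$.

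With these ingredients in hand, the integral
\[
\Phi_X(f_{a,\K}) = \int_{\cO_X} f_{a,\K}(Y)\, |\nu_m(Y)|
\]
has motivic exponential integrand (the restriction of $f_a$ to the orbit, which is motivic exponential in $(X,a,Y)$) against a constructible family of motivic measures indexed by $(X,a) \in V_{d-2m} \times S$. The Cluckers--Loeser integration theorem then produces a motivic exponential function $F_m$ on $V_{d-2m} \times S$ that specialises to $\Phi_X(f_{a,\K})$, and summing over the finitely many possible values of $m$ gives the desired $F$ on $\fg \times S$.

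The main obstacle is confirming that the integrand is motivically integrable, so that Cluckers--Loeser applies uniformly. Classically, for $p$ large relative to the absolute root datum of $\bG$, orbital integrals converge on all of $\fg(\K)$ against any $f \in C_c^\infty(\fg(\K))$; in the motivic setting, this integrability either follows directly from \cite{cluckers-loeser:fourier} when $f_a$ is a Schwartz--Bruhat motivic exponential function of compact support, or, where additional care is required, from the integrability transfer principle of \cite{TI}. The constant $M$ is then chosen to dominate $M_\Psi^\nilp$, the residue-characteristic threshold of Proposition \ref{prop:oii}, and the threshold above which the required motivic integrability is available.
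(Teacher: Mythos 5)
Your proposal is correct and takes essentially the same route as the paper: the paper's own proof simply cites Proposition \ref{prop:oii}, the construction of measures from definable volume forms in \S\ref{defdif}--\S\ref{subsub:invforms}, and the Cluckers--Loeser integration theorem, which is exactly what you spell out via the stratification by orbit dimension and the definable family of orbits with their forms $\nu_m$. Your extra discussion of motivic integrability (via Schwartz--Bruhat functions or the interpolation result of \cite{TI}) is a sound elaboration of a point the paper leaves implicit.
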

\begin{proof}
The statement follows immediately from the above proposition, the constructions of \S\ref{defdif} and \S \ref{subsub:invforms} and the main theorems on motivic integration of Section 4 in \cite{cluckers-loeser:fourier}.  
\end{proof}


\section{Local integrability in large positive characteristic}

\subsection{The function $\eta$}\label{sub:eta}
In \cite{adler-debacker:mk-theory}*{Appendix A}, R. Huntsinger proved an
integral formula for the function representing the Fourier transform of an invariant distribution, which plays a crucial role in our poof.
We need to quote some definitions from
\cite{adler-debacker:mk-theory}*{Appendix A}.
Let $\Lambda$ be an additive character of $\K$ with conductor $\mathfrak p_\K$.
Here we will make use of the notation defined in Appendix A, see \S \ref{sub:inv.distr} below; we will also use the Fourier transform on $\fg$; the definition is recalled  in \S \ref{sub:FT}.

\begin{defn}\cite{adler-debacker:mk-theory}*{Definition A.1.1}\label{def:eta}
Let $K$ be an open compact subgroup of $\bG(\K)$.
For $\lambda\in \fg^\ast(\K)$
and $X\in \fg(\K)$, define
$$
\eta_X(\lambda)=\int_K\Lambda\left(\lambda(\Ad(k)X)\right)\, dk,
$$
where $dk$ is the Haar measure on $K$ normalized so that the volume of $K$
is $1$.
\end{defn}
In this definition, the subgroup $K$ is arbitrary.
Later we will need to assume that it is definable.
Once and for all, let us pick a definable open compact subgroup, for example, take
$K=\bG(\K)_{x_0,r}$ for some fixed optimal point $x_0$ in an alcove $\bar C$ as in \S\ref{sub:MP} (i.e., more precisely,  we pick an arbitrary tuple of baricentric coordinates of $x_0$, that is, an element of the set $\mathfrak P_{\Phi^\unr, \tau}$ of \S \ref{subsub:OP}),
and an arbitrary $r>0$, say, $r=1$ (such a subgroup is definable by Lemma \ref{lem:k0}). Everywhere below, we will use this subgroup $K$.
Let $c_\K$ be the volume of this subgroup $K=\bG(\K)_{x_0, 1}\subset \bG(\K)$ with respect to the measure on $\bG(\K)$ defined in \S \ref{sub:HM}.
Then $c_\K$ is a ``motivic constant'', that is, a motivic function on a point.
Since $c_\K$ might not be invertible in the ring of motivic functions, we cannot say that the motivic measure will be normalized to give $K$ volume $1$; instead, the constant $c_\K$
will appear as a denominator every time we need to replace the integral over $K$ with a motivic integral.

\begin{defn}
Fix $r\in \R$. Let ${\bf 1}_{\fg^\ast(\K)_r}$ denote the characteristic function
of the set $\fg^\ast(\K)_r\subset \fg^\ast(\K)$.
Let
$$\eta_{X, r}:=\eta_X{\bf 1}_{\fg^\ast(\K)_r}.$$
\end{defn}

For every $r\in \R$, the function $\eta_{X, r}$
belongs to the space $\tf(\fg^\ast(\K)_r)$,  see
\cite{adler-debacker:mk-theory}*{Corollary A.3.4}.
Then, given a distribution $\mu\in J(\fg^\ast(\K)_r)$ (where we use the notation recalled in 
\S \ref{sub:inv.distr} below), it makes sense to write
$\mu(\eta_X)=\mu(\eta_{X, r})$.
Moreover, the map $X\mapsto \eta_{X, r}$ is locally constant in $X$.
The main theorem of \cite{adler-debacker:mk-theory}*{Appendix A} is:
\begin{thm}\cite{adler-debacker:mk-theory}*{Theorem A.1.2}\label{thm:eta}
Fix $r\in \R$. Let $\mu\in J(\fg^\ast(\K)_r)$. Then $\widehat \mu$ is represented on $\fg^\reg$ by the locally constant function
$X\mapsto \mu(\eta_X)$.
\end{thm}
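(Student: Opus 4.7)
The plan is to unwind both sides of the desired identity $\widehat\mu(f)=\int f(X)\mu(\eta_X)\,dX$ (for test functions $f$ supported in $\fg^\reg(\K)$) and to reduce them to a common expression by a Fubini-type interchange that is enabled by the $\bG(\K)$-invariance of $\mu$. By definition $\widehat\mu(f)=\mu(\widehat f)$, where $\widehat f(\lambda)=\int_{\fg(\K)} f(Y)\Lambda(\lambda(Y))\,dY$; since $\mu\in J(\fg^\ast(\K)_r)$, only the restriction of $\widehat f$ to $\fg^\ast(\K)_r$ is relevant. On the other side, because $X\mapsto\eta_{X,r}$ is locally constant with values in $\tf(\fg^\ast(\K)_r)$, the pairing $\mu(\eta_X)$ is a locally constant function of $X$, so the integral $\int f(X)\mu(\eta_X)\,dX$ converges for any $f\in C_c^\infty(\fg^\reg(\K))$.

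First, I would use the invariance of $\mu$ to symmetrise $\widehat f$ over the compact subgroup $K$. Setting $(\widehat f)^K(\lambda):=\int_K \widehat f(\Ad^\ast(k)\lambda)\,dk$, invariance gives $\mu(\widehat f)=\mu((\widehat f)^K)$. An elementary change of variable $Y\mapsto \Ad(k)^{-1}Y$ in the inner integral (preserving Haar measure on $\fg(\K)$) lets me rewrite this as
\[
(\widehat f)^K(\lambda)=\int_K\!\int_{\fg(\K)} f(Y)\,\Lambda\bigl(\lambda(\Ad(k)Y)\bigr)\,dY\,dk.
\]
On the right-hand side, unfolding $\eta_{X,r}(\lambda)={\bf 1}_{\fg^\ast(\K)_r}(\lambda)\int_K\Lambda(\lambda(\Ad(k)X))\,dk$ and applying Fubini to the $X$- and $K$-integrations (once the interchange with $\mu$ is granted) gives exactly the same integrand inside $\mu$, up to the harmless factor ${\bf 1}_{\fg^\ast(\K)_r}$ which is absorbed since $\mu$ is supported in $\fg^\ast(\K)_r$. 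Thus $\int f(X)\mu(\eta_X)\,dX=\mu((\widehat f)^K)=\mu(\widehat f)=\widehat\mu(f)$.

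The main obstacle is justifying the passage of $\mu$ through the $X$-integral. The way to handle this is to exploit the local constancy of the map $X\mapsto\eta_{X,r}\in\tf(\fg^\ast(\K)_r)$: since $f$ has compact support, one can find a fine enough open-compact partition $\{U_i\}$ of $\operatorname{supp}(f)$ on which both $f$ and $X\mapsto\eta_{X,r}$ are constant, say with values $c_i$ and $\eta_i$. Then the $X$-integral collapses to a \emph{finite} sum $\sum_i c_i\,\vol(U_i)\,\eta_i\in\tf(\fg^\ast(\K)_r)$, which trivially commutes with the linear functional $\mu$; the same reduction legitimizes the interchange with the $K$-average. This reduction is also what supplies the auxiliary statement that $X\mapsto\mu(\eta_X)$ is locally constant on $\fg(\K)$. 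Regularity of $X$ plays no role in this computation; its significance appears only in the dual assertion (not needed for the present identity) that the representing function $\mu(\eta_X)$ is locally integrable, for which some control near the non-regular locus would be required.
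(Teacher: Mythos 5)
The paper does not actually prove this statement: it is quoted verbatim from Theorem A.1.2 of the appendix by Huntsinger in \cite{adler-debacker:mk-theory}, so there is no internal proof to compare against. Your unwinding --- symmetrize $\hat f$ over $K$ using the invariance of $\mu$, identify $(\hat f)^K\cdot{\bf 1}_{\fg^\ast(\K)_r}$ with $\lambda\mapsto\int f(X)\eta_{X,r}(\lambda)\,dX$, and justify pulling $\mu$ through the $X$-integral by reducing to a finite sum via local constancy of $X\mapsto\eta_{X,r}$ --- is the standard argument for this identity, and it is correct as far as it goes, granted the two facts the paper quotes just before the theorem: that $\eta_{X,r}$ lies in $\tf(\fg^\ast(\K)_r)$ and that $X\mapsto\eta_{X,r}$ is locally constant.

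Your closing remark that ``regularity of $X$ plays no role in this computation'' is, however, wrong, and the error is not cosmetic. The entire content of the theorem is concentrated in the assertion that $\eta_{X,r}$ is \emph{compactly supported} on $\fg^\ast(\K)_r$ (the cited Corollary A.3.4); without it, $\mu(\eta_X)$ is not even defined (a distribution in $J(\fg^\ast(\K)_r)$ can only be evaluated on elements of $\tf$), and your finite-sum reduction collapses. This compact-support property genuinely fails off the regular set: for $X=0$ one has $\eta_{0,r}={\bf 1}_{\fg^\ast(\K)_r}$, and $\fg^\ast(\K)_r=\bigcup_{x}\fg^\ast(\K)_{x,r}$ is an unbounded $G$-domain, so $\eta_{0,r}\notin\tf(\fg^\ast(\K)_r)$; the same happens for any central $X$, where $|\eta_X|\equiv 1$. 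Regularity of $X$ is thus exactly what makes the right-hand side of the identity meaningful --- consistently with the fact that $\widehat\mu$ of a nontrivial nilpotent orbital integral is not represented by a locally constant function near the origin. Your proof survives because you restrict $f$ to have support in $\fg^\reg$ and treat the quoted properties of $\eta_{X,r}$ as black boxes, but be aware that those black boxes are where the hard analysis of the cited appendix lives, and that they are statements about regular $X$ only.
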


Using the bilinear form $\langle,\rangle$ to identify $\fg(\K)$ with
$\fg^{\ast}(\K)$, we can transport the function $\eta$ to $\fg(\K)$.
\begin{defn}\label{def:tildeta}
Let $\tilde \eta$ be the function on $\fg(\K)$ defined by:
$$
\tilde \eta_X(Y)=\int_K\Lambda(\langle \Ad(k)X, Y \rangle )\, dk,
$$
and let
$$\tilde \eta_{X, r}:=\tilde \eta_X{\bf 1}_{\fg(\K)_r}.$$
\end{defn}

Now, observe that if we identify $\fg(\K)$ with $\fg^\ast(\K)$ using
the form $\langle, \rangle$, then the space of distributions
$J(\fg^{\ast}(\K))$ is identified with the space $J(\fg(\K))$. Since
the set $\fg^\ast(\K)_r$ is identified with $\fg(\K)_r$ for all $r\in \R$,
the space $J(\fg^\ast(\K)_r)$ is identified with $J(\fg(\K)_r)$.
Now Theorem \ref{thm:eta} can be restated as:
\begin{cor}\label{cor:eta1}
Fix $r\in \R$. Let $\mu\in J(\fg(\K)_r)$.
Then $\widehat \mu$ is represented on $\fg^\reg$ by the locally constant
function
$X\mapsto \mu(\tilde\eta_X)$.
\end{cor}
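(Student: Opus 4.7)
The plan is to deduce this corollary as a straightforward translation of Theorem~\ref{thm:eta} through the $\K$-linear isomorphism $\iota: \fg(\K) \to \fg^\ast(\K)$ induced by the bilinear form $\langle,\rangle$ of Proposition~\ref{subsub:hy2}, sending $Y \mapsto \langle Y, \cdot\rangle$. The main point is simply to check that every object appearing in Theorem~\ref{thm:eta} corresponds under $\iota$ to the corresponding object in Corollary~\ref{cor:eta1}; no new analytic input is needed.

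First I would record the compatibilities furnished by Proposition~\ref{subsub:hy2}. The form $\langle,\rangle$ is non-degenerate, so $\iota$ is a $\K$-linear isomorphism; it is $\bG(\K)$-equivariant because $\langle,\rangle$ is $\bG(\K)$-invariant. By the identification of filtration lattices $\fg(\K)_{x,r} \leftrightarrow \fg^\ast(\K)_{x,r}$ from the proposition and the definition of $\fg(\K)_r$ and $\fg^\ast(\K)_r$ as unions over $x \in \scB(\bG,\K)$, the isomorphism $\iota$ restricts to a bijection $\fg(\K)_r \to \fg^\ast(\K)_r$ for every $r\in\R$. Consequently, pullback along $\iota$ yields a bijection $\iota^* : J(\fg^\ast(\K)_r) \to J(\fg(\K)_r)$; write $\mu^* := (\iota^*)^{-1}\mu \in J(\fg^\ast(\K)_r)$ for $\mu \in J(\fg(\K)_r)$.

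Next I would compare the functions $\eta_X$ and $\tilde\eta_X$. For $Y \in \fg(\K)$, symmetry of $\langle,\rangle$ gives
\[
\iota(Y)(\Ad(k)X) \;=\; \langle Y, \Ad(k)X\rangle \;=\; \langle \Ad(k)X, Y\rangle,
\]
so comparing Definition~\ref{def:eta} with Definition~\ref{def:tildeta} yields the identity $\tilde\eta_X = \eta_X \circ \iota$. Since $\iota$ maps $\fg(\K)_r$ onto $\fg^\ast(\K)_r$, we also obtain $\tilde\eta_{X,r} = \eta_{X,r}\circ\iota$. Thus, for any $\mu \in J(\fg(\K)_r)$ and any $X \in \fg(\K)$,
\[
\mu(\tilde\eta_X) \;=\; \mu(\tilde\eta_{X,r}) \;=\; \mu^*(\eta_{X,r}) \;=\; \mu^*(\eta_X),
\]
where the middle equality is the definition of the pullback $\iota^*$.

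Finally, I would note that under $\iota$ the Fourier transform $\widehat{\,\cdot\,}$ used in Corollary~\ref{cor:eta1} (defined in \S\ref{sub:FT} using $\langle,\rangle$ and the additive character $\Lambda$) is carried to the Fourier transform appearing in Theorem~\ref{thm:eta}; that is, $\widehat{\mu}$ and $\widehat{\mu^*}$ are identified as functions on $\fg^\reg$ via $\iota$. Applying Theorem~\ref{thm:eta} to $\mu^*$ gives that $\widehat{\mu^*}$ is represented on $\fg^\reg$ by the locally constant function $X \mapsto \mu^*(\eta_X)$, and by the displayed identity this equals $\mu(\tilde\eta_X)$, proving the corollary. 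The only potential pitfall is bookkeeping — verifying that the Fourier-transform conventions on $\fg$ and on $\fg^\ast$ are set up to agree under $\iota$ — but this is exactly the reason for identifying $\fg^\ast(\K)$ with $\fg(\K)$ through $\langle,\rangle$ in the first place, so no substantive issue arises.
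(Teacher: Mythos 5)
Your proposal is correct and follows exactly the route the paper takes: the paper's ``proof'' is the short paragraph preceding the corollary, which observes that the form $\langle,\rangle$ of Proposition~\ref{subsub:hy2} identifies $\fg(\K)_r$ with $\fg^\ast(\K)_r$ and hence $J(\fg(\K)_r)$ with $J(\fg^\ast(\K)_r)$, after which Theorem~\ref{thm:eta} restates as the corollary. You merely spell out the bookkeeping (the identity $\tilde\eta_X=\eta_X\circ\iota$ via symmetry of the form, and the compatibility of the Fourier-transform conventions) that the paper leaves implicit, and all of it checks out.
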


\subsection{Fourier transforms of orbital integrals}
In this section, we {use the bilinear form of Proposition \ref{subsub:hy2}}.
We continue to work with the function $\tilde\eta_X$ from Definition \ref{def:tildeta} above.

\begin{lem}\label{lem:eta}
Up to a constant, the functions $Y\mapsto \tilde\eta_X(Y)$ form a constructible family of
  motivic exponential functions (indexed by $X\in \fg$).
More precisely, there exists a   motivic exponential function
$\Upsilon$ on $\fg\times \fg$, and a constant $M_\Upsilon$ such that for
all $\K\in \cA_{M_\Upsilon}\cup \cB_{M_\Upsilon}$, we have
$$\frac{1}{c_\K}\Upsilon_\K(X, Y)=\tilde\eta_X(Y),
$$
for all $(X, Y)\in \fg(\K)\times \fg(\K)$, where $c_\K$ is the motivic constant that is defined
below Definition \ref{def:eta}.
\end{lem}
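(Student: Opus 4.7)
The plan is to read the formula
\[
\tilde\eta_X(Y) = \int_K \Lambda\bigl(\langle \Ad(k)X,\, Y\rangle\bigr)\, dk
\]
as a parametric motivic exponential integral in the parameters $(X,Y)\in\fg\times\fg$, and then apply the main integration theorem of \cite{cluckers-loeser:fourier}. What must be checked is that every ingredient appearing on the right-hand side is (the specialization of) a constructible motivic exponential object, uniformly in $\K\in\cA_M\cup\cB_M$ for $M$ large.

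First I would assemble the definability of the integrand. The subgroup $K=\bG(\K)_{x_0,1}$ is definable by Lemma \ref{lem:k0}(3), with parameter in $Z_{[\Gamma]}$. The adjoint action $(k,X)\mapsto \Ad(k)X$ is a morphism of algebraic groups defined over $\Z[1/R]$ through the chosen embedding $\Xi\colon \bG\hookrightarrow\GL_n$, hence it is a definable function of $(k,X)$ (using the cocycle parameter $z$ as usual). The bilinear form $\langle\cdot,\cdot\rangle$ is a $\K$-valued definable function of its two arguments, as noted immediately after Proposition \ref{subsub:hy2}. Therefore $(k,X,Y)\mapsto \langle \Ad(k)X,Y\rangle$ is a definable $\K$-valued function on $K\times\fg\times\fg$, and composing with the fixed additive character $\Lambda$ produces a motivic exponential integrand in the sense of \cite{cluckers-loeser:fourier}.

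Next I would handle the measure. By \S\ref{sub:HM} (especially \S\ref{subsub:invforms}), the Haar measure on $\bG(\K)$ coming from the pulled-back top-degree form $\omega_z$ is a specialization of a motivic measure in the family $(\omega_z)_{z\in Z_{[\Gamma]}}$. Restricting to the definable set $K$ yields a motivic measure on $K$ whose total mass is a motivic constant, namely $c_\K$. Since the integrand is bounded in absolute value by $1$ and supported on the compact definable set $K$, it is integrable in the motivic sense (it lies in the Schwartz--Bruhat class of \cite{cluckers-loeser:fourier}*{\S 7.5} times an additive character), and integration in the $k$-variable therefore produces a constructible motivic exponential function $\Upsilon(X,Y)$ on $\fg\times\fg$ by \cite{cluckers-loeser:fourier}*{\S 4}. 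The discrepancy between the integral on the motivic side (with total $k$-mass $c_\K$) and the original definition (where $dk$ is normalized to give $K$ volume $1$) is exactly the factor $1/c_\K$ in the statement; this yields the identity
\[
\frac{1}{c_\K}\,\Upsilon_\K(X,Y) = \tilde\eta_X(Y)
\]
for every $(X,Y)\in\fg(\K)\times\fg(\K)$ and every $\K$ of sufficiently large residue characteristic.

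No step is really hard once the bookkeeping of parameters (the cocycle $z\in Z_{[\Gamma]}$, the choice of optimal point defining $K$, and the coordinates on $\fg$) is in place; the mild subtlety is the $1/c_\K$ normalization, which should be treated carefully because $c_\K$ is not invertible inside the ring of motivic functions and so must be kept as an explicit prefactor rather than absorbed into the Haar measure.
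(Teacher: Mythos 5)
Your proposal is correct and follows essentially the same route as the paper: pick the definable subgroup $K$ from after Definition \ref{def:eta}, observe that $(k,X,Y)\mapsto\Lambda(\langle \Ad(k)X,Y\rangle)$ is a motivic exponential integrand, integrate over $k$ against the motivic measure of \S\ref{sub:HM} using \cite{cluckers-loeser:fourier}*{Theorem 4.1.1}, and keep the normalization discrepancy as the explicit prefactor $1/c_\K$. Your write-up simply spells out the definability bookkeeping and the Schwartz--Bruhat integrability remark that the paper leaves implicit.
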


\begin{proof}
In the definition of the function $\tilde \eta_X$, the compact open
subgroup $K$ is arbitrary. Pick the definable subgroup $K$ discussed after the Definition \ref{def:eta}.
Then the statement follows immediately from the main theorem about motivic integrals of   motivic exponential functions,
\cite{cluckers-loeser:fourier}*{Theorem 4.1.1}, which states that if we integrate a   motivic exponential function with respect to some of its variables, with respect to a motivic measure, the result is a   motivic exponential function of the remaining variables.
Note that here the integral over $K$ is with respect to the measure that comes from the differential form on $\bG(\K)$ defined in \S \ref{sub:HM}, which requires a slight generalization of the framework of motivic integration, which is described in \S \ref{subsub:gen}.
\end{proof}

Recall that the locally constant function on $\fg^\reg$ representing the  Fourier
transform of $\Phi_X$ is denoted by $\widehat\mu_X$, (cf. Appendix A).
In order to prove the next theorem about local integrability of the functions $\widehat\mu_X$ in positive characteristic, we need a family version
of Lemma \ref{lem:eta} for the  functions $\tilde\eta_{X,r}$ as $r$ varies.
More precisely, it will be sufficient to consider the
family of functions $\tilde\eta_{X, l}$, as $l$ runs through the integers.
{Here we use the subscript `$u$' in all the notation to emphasize that this is a \emph{uniform in a family} version of the corresponding earlier constructions.}

\begin{lem}\label{lem:eta-uni}
Up to a constant, the functions $Y\mapsto \tilde\eta_{X,l}(Y)$ form a constructible family of
  motivic exponential functions
(indexed by $X\in \fg$ and $l\in \Z$).
More precisely, there exists a   motivic exponential function
$\Upsilon^u$ on $\fg\times h[0,0,1]\times \fg$, and a constant
$M^u_\Upsilon$ such that for
all $\K\in \cA_{M^u_\Upsilon}\cup \cB_{M^u_\Upsilon}$, we have
$$\frac{1}{c_\K}\Upsilon^u_\K(X,l, Y)=\tilde\eta_{X, l}(Y),
$$
for all $(X,l, Y)\in \fg(\K)\times \Z \times \fg(\K)$.
\end{lem}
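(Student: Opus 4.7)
The plan is to reduce this uniform-in-$l$ statement to Lemma \ref{lem:eta} together with the family version of the characteristic functions of the Moy-Prasad lattices already established in Corollary \ref{lem:gr}(2). Concretely, I would write
\[
\tilde\eta_{X,l}(Y) \;=\; \tilde\eta_X(Y)\cdot \mathbf{1}_{\fg(\K)_l}(Y),
\]
so the target object is a product of two factors, each of which I already understand: the first factor is (up to the constant $c_\K$) a motivic exponential function in $(X,Y)$, and the second factor, as $l$ varies, is a constructible family of motivic functions in $(l,Y)$.

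First, by Lemma \ref{lem:eta} there exist a motivic exponential function $\Upsilon$ on $\fg\times\fg$ and a constant $M_\Upsilon$ such that for $\K\in\cA_{M_\Upsilon}\cup\cB_{M_\Upsilon}$ we have $c_\K\,\tilde\eta_X(Y)=\Upsilon_\K(X,Y)$ for all $(X,Y)$. Pulling $\Upsilon$ back along the coordinate projection $\fg\times h[0,0,1]\times\fg\to\fg\times\fg$, I view it as a motivic exponential function on $\fg\times h[0,0,1]\times\fg$ (trivially depending on the $h[0,0,1]$-coordinate $l$). Second, by Corollary \ref{lem:gr}(2), the family $\{\mathbf{1}_{\fg(\K)_l}(Y)\}_{l\in\Z}$ is a constructible family of motivic functions on $h[0,0,1]\times\fg$, for $\K$ of sufficiently large residue characteristic; pulling back along the projection $\fg\times h[0,0,1]\times\fg\to h[0,0,1]\times\fg$ gives a motivic function on the full product (trivially in $X$).

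Now I multiply the two. Since the class of constructible motivic exponential functions is a ring (closed under products), the product is again a motivic exponential function on $\fg\times h[0,0,1]\times\fg$; call it $\Upsilon^u$. Taking $M^u_\Upsilon$ to be the maximum of $M_\Upsilon$ and the constant from Corollary \ref{lem:gr}(2), for every $\K\in\cA_{M^u_\Upsilon}\cup\cB_{M^u_\Upsilon}$ and every $(X,l,Y)\in\fg(\K)\times\Z\times\fg(\K)$ we obtain
\[
\tfrac{1}{c_\K}\,\Upsilon^u_\K(X,l,Y)\;=\;\tilde\eta_X(Y)\cdot\mathbf{1}_{\fg(\K)_l}(Y)\;=\;\tilde\eta_{X,l}(Y),
\]
as required.

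There is essentially no obstacle here beyond bookkeeping: Lemma \ref{lem:eta} already did the integration-over-$K$ work via \cite{cluckers-loeser:fourier}*{Theorem 4.1.1}, and Corollary \ref{lem:gr}(2) handled the definability of $\fg(\K)_l$ uniformly in $l$. The only point to watch is to make sure that the pull-backs and products are taken on a common ambient definable subassignment (namely $\fg\times h[0,0,1]\times\fg$, with parameters in $Z_{[\Gamma]}$) and that the single constant $M^u_\Upsilon$ suffices to ensure all the required specializations behave correctly; both are automatic from the stated results.
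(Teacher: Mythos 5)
Your proof is correct and follows exactly the paper's argument: factor $\tilde\eta_{X,l}=\tilde\eta_X\cdot\mathbf{1}_{\fg(\K)_l}$, apply Lemma \ref{lem:eta} and Corollary \ref{lem:gr}(2) to the two factors, and use closure of motivic exponential functions under products. The extra bookkeeping about pull-backs to the common ambient subassignment is implicit in the paper but harmless to spell out.
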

\begin{proof}
By definition, $\tilde\eta_{X, l}=\tilde\eta_X {\bf 1}_{\fg(\K)_l}$.
By Lemma \ref{lem:eta}, the family $\{\tilde\eta_X\}_{X\in \fg}$
is a constructible
family of motivic exponential functions, and by Corollary \ref{lem:gr}, Part 2, the functions
${\bf 1}_{\fg(\K)_l}$ also form a constructible family indexed by $l\in \Z$;
thus,  $\tilde\eta_{X, l}$ is a constructible family of motivic exponential functions, indexed by $\fg\times \Z$.
\end{proof}

Now we are ready to prove the main theorem of this section.
\begin{thm}\label{thm:ssint}
There exists a constant $M$ depending only on the root datum of $\bG$,
and a motivic exponential function $H$ on $\fg\times \fg^\reg$
such that for every local field
$\K\in \cA_{M}\cup \cB_{M}$,
for every $X\in \fg$, we have
$$\widehat\mu_X(Y)=\frac{1}{c_\K}H_\K(X, Y)$$
for all $Y\in \fg^\reg_\K$,
where $c_\K$ is the volume of the subgroup $K$, see the discussion after Definition \ref{def:eta}.
\end{thm}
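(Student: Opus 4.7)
The plan is to combine the integral representation of $\widehat\mu_X$ furnished by Corollary \ref{cor:eta1}, the uniform-in-parameters family version of $\tilde\eta$ given by Lemma \ref{lem:eta-uni}, and the motivic nature of orbital integrals recorded in Corollary \ref{prop:noi}. The essential trick is to let the truncation parameter $r$ in Corollary \ref{cor:eta1} depend definably on $X$, so that the resulting truncated test function is compactly supported and the whole machinery of motivic integration applies uniformly.

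First, I would produce a definable integer-valued function $l\colon \fg\to h[0,0,1]$ with the property that $X\in \fg(\K)_{l(X)}$ for every $X\in\fg(\K)$ and every relevant $\K$. Using the fixed coordinates $(x_i)_{i=1}^d$ on $\fg$ from \S\ref{sub:hypo} and a fixed optimal base-point $x_0\in\mathfrak P_{\Phi^\unr,\tau}$, one concrete choice is $l(X):=\min_i\ord(x_i)-c$, where $c$ is a constant depending only on the root datum and the chosen basis, arranged so that $X$ belongs to $\fg(\K)_{x_0,l(X)}$. Since each $\fg(\K)_r$ is $\bG(\K)$-invariant, this forces $\cO_X\subset \fg(\K)_{l(X)}$, so $\Phi_X\in J(\fg(\K)_{l(X)})$.

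For $Y\in \fg^\reg_\K$, Corollary \ref{cor:eta1} applied with $r=l(X)$ and $\mu=\Phi_X$ then gives
\[
\widehat\mu_X(Y)=\Phi_X(\tilde\eta_{Y,l(X)}),
\]
because $\tilde\eta_{Y,l(X)}$ is compactly supported (by \cite{adler-debacker:mk-theory}*{Corollary A.3.4}) and agrees with $\tilde\eta_Y$ on the support of $\Phi_X$. By Lemma \ref{lem:eta-uni}, the family $\{c_\K\tilde\eta_{Y,l}\}_{(Y,l)\in\fg\times h[0,0,1]}$ is a constructible family of motivic exponential test functions.

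Finally, I would feed this family into Corollary \ref{prop:noi} to obtain a motivic exponential function $F$ on $\fg\times\fg\times h[0,0,1]$ and a constant $M>0$, depending only on the absolute root datum of $\bG$, such that $c_\K\,\Phi_X(\tilde\eta_{Y,l})=F_\K(X,Y,l)$ for all $\K\in \cA_M\cup \cB_M$. Substituting $l=l(X)$ (a definable function of $X$) and setting $H(X,Y):=F(X,Y,l(X))$ produces a motivic exponential function on $\fg\times\fg$ with $\widehat\mu_X(Y)=c_\K^{-1}H_\K(X,Y)$ for all $Y\in \fg^\reg_\K$, which is the required statement. The most delicate point is arranging the cut-off $l(X)$ to be simultaneously definable in $X$, uniform in the local field $\K$, and uniform in the root-datum parameter $z\in Z_{[\Gamma]}$; this rests on the definability of the Moy-Prasad filtration established in Lemma \ref{lem:gxr} and Corollary \ref{lem:gr}. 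Once this is secured, the theorem is a direct plug-the-definable-parameter maneuver within the motivic integration framework.
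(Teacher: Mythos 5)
Your proposal is correct and follows essentially the same route as the paper: Huntsinger's formula (Corollary \ref{cor:eta1}) with a truncation level $l$ such that $X\in\fg(\K)_l$, the uniform family of Lemma \ref{lem:eta-uni}, and Corollary \ref{prop:noi} applied to the family $\tilde\eta_{Y,l}$. The only difference is that you make the choice of cutoff explicit as a definable function $l(X)$ and substitute it, a detail the paper leaves implicit after noting that $\Phi_X(\tilde\eta_{Y,l})$ is independent of the admissible $l$.
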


\begin{proof}
We recall that $\fg(\K)=\bigcup_{r<0}\fg(\K)_r$.

For the moment, fix a field $\K$; let $X\in \fg(\K)$, and let $r\in \R$ be an arbitrary real number such that
$X\in \fg(\K)_r$.
Then the distribution $\Phi_X$ lies in the space  $J(\fg(\K)_r)$, and
by Huntsinger's formula (of Corollary \ref{cor:eta1}), we have
$\widehat\mu_X(Y)=\Phi_X(\tilde \eta_{Y,r})$, where $\tilde\eta_{Y,r}$ are the functions from \S \ref{sub:eta}. Note that the right-hand side does not depend on $r$ as long as $X\in \fg(\K)_r$.
Thus, for every integer $l$, we  have
$$\widehat\mu_X(Y)=\Phi_X(\tilde \eta_{Y,l}) \text{ for } X\in \fg(\K)_l,
\ Y\in \fg(\K)^\reg. $$
By Lemma \ref{lem:eta-uni}, for $\K\in \cA_{M_\Upsilon^u}\cup \cB_{M_\Upsilon^u}$, we have
that $c_\K\tilde \eta_{Y, l}$ is the specialization to $\K$ of the
motivic exponential function $\Upsilon^u$.
Take the family $\tilde\eta_{Y, l}$ (indexed by $Y\in \fg^\reg$ and $l\in \Z$)
as the family of test functions. The theorem now follows by applying
Proposition \ref{prop:noi} to this family.
\end{proof}

\subsubsection{Proof of Theorem \ref{thm:orb int loc int}}\label{subsub:th1p2}
The statement now follows from Theorem \ref{thm:ssint} and Harish-Chandra's theorems
\cite{hc:queens} (reproduced as Statement (\ref{HC:orb-int}) in \S \ref{sub:HCtheorems}), by the Transfer of local integrability and Transfer of boundedness
principles, \cite{TI}, quoted here as Theorems \ref{thm:ti} and \ref{thm:bounded}. 
Indeed, Harish-Chandra's result asserts nice-ness of $\widehat \mu_X$ when the characteristic of $\K$ is zero, and the transfer results of \cite{TI}, which apply thanks to Theorem \ref{thm:ssint},  yield that the conditions of nice-ness (local integrability  and local boundedness) are independent of the characteristic of $\K$, once the residue field characteristic is sufficiently large.
\qed

Let us summarize the origins of the constant $M_\bG^\orb$ that provides the restriction
on the characteristic in this theorem.
\begin{enumerate}
\item $M_\bG^\orb  \ge M_\Psi$ (where $M_\Psi$ is the constant defined in
\S \ref{sub:roots}), so that the group $\bG(\K)$ indeed appears as an element in the constructible family
for some parameter in $Z_{[\Gamma]}$ with a suitable $\Gamma$.

\item We assume that $M_\bG^\orb \ge M_\Psi^\nilp$, which ensures that it  is large enough so that  there are finitely many nilpotent orbits, and nilpotent orbital integrals are well-defined distributions.
It is also large enough so that the bilinear form
$\langle, \rangle$ of \S \ref{subsub:hy2} exists.

\item $M_\bG^\orb  \ge M_{\Upsilon}^u$, where $M_\Upsilon^u$ is the constant defined in
Lemma \ref{lem:eta-uni}, so that the family of motivic functions of Lemma \ref{lem:eta-uni} specializes to Huntsinger's functions $\tilde \eta_{X, l}$.

\item $M_\bG^\orb $ needs to be large enough so that for the family
of functions $\tilde\eta_{X, l}$, the motivic integrals
over the orbits specialize to the orbital integrals, see Proposition \ref{prop:noi}.

\item Finally, $M_\bG^\orb $ might need to be enlarged further
so that transfer of integrability holds for the motivic exponential function $H(X, Y)$ that specializes to $c_\K\widehat \mu_X(Y)$.
\end{enumerate}

\subsection{Harish-Chandra characters}\label{sec:char}
Let $M_\bG$ be a constant, determined by the root datum of $\bG$, such that all the hypotheses listed in  \S\ref{sub:loc_gamma} hold. That is, we take $M_\bG$ to be the maximum of the constants
$M_{\bM}^\orb $ of Theorem \ref{thm:orb int loc int} for every $\bM$ on the list of possible reductive groups that can arise as the connected component of a centralizer of a semisimple element of $\bG$.

\subsubsection{Proof of Theorem \ref{thm:char}}\label{subsub:th2}
Let $\K\in \cB_{M_\bG}$, where $M_\bG$ is defined above, and let $\pi$ be an admissible representation of $\bG(\K)$.
First, we prove this theorem in a neighbourhood of the identity, more precisely, on the set
$\bG(\K)_r$, where $r$ is chosen so that $\fg_r=\fg_{\rho(\pi)^+}$, where
$\rho(\pi)$ is the depth of $\pi$, {or any larger number such that Hypothesis \ref{subsub:exp} holds for $r$}.
The niceness of $\theta_\pi$ restricted to this neighbourhood is immediate; indeed,
by Theorem \ref{thm:expansion}, on
$\bG(\K)_r$ the function $\theta_\pi$ is a finite linear combination of the functions $\widehat\mu_\cO$, and these functions are nice by Theorem \ref{thm:orb int loc int}, Part (1).
 The next statement is an easy technical point: since $\theta_\pi$ is nice,
the integral $\int_{\bG(\K)}\theta_\pi(g) f(g) \, dg$ converges for all test functions
$f$ with support contained in $\bG(\K)_r$, not just those with support contained in the set of regular elements. One still needs to show that this integral coincides with the value of the distribution $\Theta_\pi(f)$ for such functions. This is almost a tautology, based on careful reading of the work of DeBacker.
Indeed, even though the local character expansion is stated in \cite{debacker:homogeneity} as an equality of
functions defined on the regular set, in fact it is proved at the level of distributions, without the assumption that the support of the test function is
contained in the regular set; see the proof of \cite{debacker:homogeneity}*{Theorem 3.5.2}, where (using our notation for the orbital integral)
it is shown that for any
$f\in  C_c^\infty(\fg(\K)_r)$,
$$\Theta_\pi(f\circ \mexp^{-1})=\sum_{\cO\in \cO(0, \K)}c_\cO(\pi)\widehat\Phi_\cO(f).$$
 {Then we have}, for all $f\in C_c^\infty(\bG(\K)_r)$:
\begin{equation*}
\begin{aligned}
\Theta_\pi(f)&=\sum_{\cO\in \cO(0, \K)}c_\cO(\pi)\widehat\Phi_\cO(f\circ\mexp)
=\sum_{\cO\in \cO(0, \K)}c_\cO(\pi)\int_{\fg(\K)}(f\circ\mexp)(X)\widehat\mu_\cO(X)\, dX\\
&=\int_{\bG(\K)}\theta_{\pi}(g)f(g)\, dg,
\end{aligned}
\end{equation*}
 {where now we know that all the integrals converge, by Theorem
\ref{thm:orb int loc int}.}

  Now let us prove that $\theta_\pi$ is nice away from the identity as well.
Our strategy, roughly speaking, is to prove that $\theta_\pi$ is nice in a neighbourhood of every semisimple element, and that any compact set in $\bG(\K)$ can be covered with finitely many such neighbourhoods.
Harish-Chandra's descent \cite{hc:van-dijk}*{Chapter 6}
allows to reduce the statement about $\theta_\pi$ in a  {$\bG(\K)$}-neighbourhood
 of a semisimple element $\gamma$ to a statement about a related distribution $\theta$
defined on a neighbourhood of $\gamma$ inside  the centralizer $M=C_G(\gamma)$ of
$\gamma$.
Finally, on a suitable neighbourhood in $M$, niceness of $\theta$ follows from the local character expansion due to Adler and Korman \cite{adler-korman:loc-char-exp} and the
fact that Fourier transforms of nilpotent orbital integrals are nice, as we have shown above.

  We proceed with the proof. {By our choice of the constant $M_\bG$,}
  all the hypotheses listed in \S \ref{sub:loc_gamma} hold.
  {Let $\gamma\in \bG(\K)$ be an arbitrary semisimple element, let $\bM$ be the algebraic group such that $C_G(\gamma)=\bM(\K)$, let $M=\bM(\K)=C_G(\gamma)$, and let all the remaining notation be as in
\S \ref{sub:loc_gamma}. Let $r>\max\{\rho(\pi), 2s(\gamma)\}$.}

   { Let $\theta$ be the distribution on $M$ defined in \cite{rodier:loc-int}*{Proposition 1} (cf. also \cite{adler-korman:loc-char-exp}*{\S 7}, where the same definition is explained for the restriction of $\theta$ to $M_r$)}.
By  \cite{hc:van-dijk}*{Corollary from Theorem 11, p.49}, if we show that $\theta$
is represented by a nice function (which is also denoted by $\theta$, by slight abuse of notation) on the set $M$, it will follow that the function $\theta_\pi$ is nice on
$\bG(\K)$. So, it remains to prove that $\theta$ is a nice function on $M$.

By Theorem \ref{thm:expansion_ss} {(and using the notation of that theorem)}, for
$Y\in \fm_r''$, the   {function $\theta(\gamma \mexp(Y))$} is a finite linear combination of the functions $\widehat \mu_{\cO}(Y)$, as $\cO$ runs over the set of nilpotent orbits in $\fm$.
Let us extend both sides by zero to $\gamma M_r$. Then we have the equality on
$\gamma M_r$.
By Theorem \ref{thm:orb int loc int}, $\widehat \mu_{\cO}(Y)$ are nice functions on
$\fm(\K)$.
Hence,   {$\theta$} is a nice function as a function on $\gamma M_r$.
 Now we would like to show that $\theta$ is a nice function on $M$. For this, since it is conjugation-invariant,  it suffices to show that the sets
${}^M\gamma M_r$ (which, by definition, are open in $M$) cover $M$,  {as $\gamma$ runs over  {the set of semisimple elements in} $M$}.

We observe that all semisimple elements of $M$ are covered automatically. Now suppose $m$
is an arbitrary element of $M$. Our assumptions on the characteristic of $\K$ guarantee that $\bM(\K)$ contains semisimple and unipotent parts of its elements.
Then we have  $m=\gamma_s\gamma_u$, with $\gamma_u \in C_{M}(\gamma_s)$.
 Then we can conjugate $m$ by an element of $C_M(\gamma_s)$ so that
 $\gamma_u$ gets replaced by a conjugate that is as close to the identity as we wish; in particular, we can ensure that it is in $M_r$, which completes this part of the proof.

 Finally, an argument identical to that shown above for a neighbourhood of the identity shows that the equality
\begin{equation*}\label{item:conv}
\Theta_\pi(f)=\int_{\bG(\K)}\theta_\pi(g) f(g) \, dg
\end{equation*}
holds for \emph{all} test functions
$f\in C_c^\infty(\bG(\K))$.
\qed

\subsection{General invariant distributions near the origin}

Combining our Theorem \ref{thm:orb int loc int} with DeBacker's results
summarised in Appendix A as Theorem \ref{thm:distr}, we obtain a partial extension  to the large positive characteristic case (in a neighbourhood of the origin)
of Harish-Chandra's theorem about invariant distributions with support bounded modulo conjugation.

\begin{thm}\label{thm:general}
Let $\bG$ be a connected unramified reductive group with the Lie algebra
$\fg$.
Let $M_\bG^\orb $ be the constant from Theorem \ref{thm:orb int loc int}.
Let $\K\in \cB_{M_\bG^\orb }$, and let $T$ be an invariant distribution
on $\fg(\K)$ with support bounded modulo conjugation.
Suppose that
the support of $T$ is contained in $\fg(\K)_{(-r)^+}$ with some $r\in \R$.
 {Then the restriction of $\hat T$ to $\fg(\K)_r$
is represented by a nice function $\vartheta_T$
on $\fg(\K)_r$}.
\end{thm}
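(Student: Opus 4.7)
The plan is to obtain the representing function $\vartheta_T$ as a finite linear combination of the Fourier transforms of nilpotent orbital integrals $\widehat{\mu}_\cO$, and then to quote Theorem~\ref{thm:orb int loc int} to conclude niceness. This reduces the statement to a direct combination of two already-established inputs: DeBacker's homogeneity result on general invariant distributions (recalled in the appendix as Theorem~\ref{thm:distr}), and our main technical result on nilpotent orbital integrals.

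More precisely, since $T$ is invariant with support bounded modulo conjugation and contained in $\fg(\K)_{(-r)^+}$, Theorem~\ref{thm:distr} (DeBacker's homogeneity) provides complex constants $c_\cO(T)$, indexed by $\cO \in \cO(0,\K)$, such that on $\fg(\K)_r$ the distribution $\hat T$ is represented by
\[
\vartheta_T \;:=\; \sum_{\cO \in \cO(0,\K)} c_\cO(T)\,\widehat{\mu}_\cO .
\]
Under our standing assumption $\K \in \cB_{M_\bG^\orb}$, the residue characteristic is large enough that $\cO(0,\K)$ is finite (and nilpotent orbital integrals are well-defined distributions), so the sum is finite. By Theorem~\ref{thm:orb int loc int}, each $\widehat{\mu}_\cO$ is nice on $\fg(\K)$. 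Since both local $L^1$-integrability and local boundedness of $|D_G|^{1/2}$ times the function are evidently preserved under finite $\C$-linear combinations, $\vartheta_T$ is nice on $\fg(\K)$, and in particular on $\fg(\K)_r$.

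The only delicate point is verifying that DeBacker's homogeneity result applies in our setting of sufficiently large positive characteristic for an unramified group. The unramifiedness hypothesis on $\bG$ in the theorem matches the setting of \cite{debacker:homogeneity}, and the ingredients DeBacker's proof requires --- finiteness of $\cO(0,\K)$, well-definedness of nilpotent orbital integrals, and the existence of the invariant bilinear form of Proposition~\ref{subsub:hy2} identifying $\fg(\K)^\ast$ with $\fg(\K)$ in a way compatible with Moy-Prasad filtrations --- are all guaranteed by our lower bound on $p$ (namely, the fact that $M_\bG^\orb \ge M_\Psi^\nilp$ by construction). Thus the real work has already been done in Theorem~\ref{thm:orb int loc int}; here we merely assemble the ingredients.
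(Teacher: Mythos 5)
Your proposal is correct and follows essentially the same route as the paper: apply DeBacker's homogeneity result (Theorem~\ref{thm:distr}) to write the restriction of $\hat T$ as a finite linear combination of the $\widehat\mu_\cO$, then invoke Theorem~\ref{thm:orb int loc int} and the stability of niceness under finite linear combinations. The paper is just slightly more explicit about the intermediate duality step --- namely that for $f$ supported in $\fg(\K)_r$ one has $\hat f\in\cD_{(-r)^+}$ (Remark~\ref{rem:FTsupport}), so that $\widehat T(f)=T(\hat f)=\sum_\cO c_\cO\Phi_\cO(\hat f)=\sum_\cO c_\cO\widehat\Phi_\cO(f)$ --- which is the precise sense in which Theorem~\ref{thm:distr} yields the representation of $\hat T$ you assert.
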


\begin{proof} Let $\K\in \cB_{M_\bG^\orb }$, and let $T$ be an invariant distribution on $\fg(\K)$ with support bounded modulo conjugation.
Then the support of $T$ is contained in $\fg(\K)_{(-r)^+}$
for sufficiently large $r>0$. Fix such an $r$, and  let $f$ be an arbitrary test function with support contained in
$\fg(\K)_{r}$. Then by Remark \ref{rem:FTsupport},
the function $\hat f$ belongs to the space $\cD_{(-r)^+}$.
By Theorem \ref{thm:distr},  the restriction of
$T$ to the space $\cD_{(-r)^+}$ is a linear combination of
the nilpotent orbital integrals.
Since by definition, $\widehat T(f)=T(\hat f)$,
this implies that for all functions $f\in \tf(\fg(\K)_r)$,
$$\widehat T(f)=T(\hat f)=\sum_{\cO\in \cO(0, \K)} c_\cO\Phi_\cO(\hat f)=
\sum_{\cO\in \cO(0, \K)} c_\cO\widehat \Phi_\cO(f),$$
{with some constants $c_{\cO}$.}
Therefore on $\fg(\K)_r$, the distribution $\widehat T$ is represented by the function
$\vartheta_T=\sum_\cO c_\cO\widehat \mu_\cO$, which {is nice}, by Theorem \ref{thm:orb int loc int}.
\end{proof}


\appendix

\section{Invariant distributions: classical results}
In this section we review the notation, definitions, and some of the classical results of
harmonic analysis on $p$-adic groups that are relevant to the present paper.

\subsection{Definitions}
As before, $\K$ is a non-Archimedean local field (with no assumption on its characteristic), $\bG$ is a reductive algebraic group over $\K$, and $\fg$ is its Lie algebra.
\subsubsection{Characters}
Let $(\pi, V)$ be an irreducible admissible representation of $\bG(\K)$.
Then the \emph{distribution character} of $\pi$ is the distribution on the
space $\tf(\bG(\K))$ of locally constant, compactly supported functions on
$\bG(\K)$ defined  by
$$\Theta_{\pi}(f)=\tr\int_{\bG(\K)}f(g)\pi(g) \, dg$$ (since $\pi$ is
admissible, the linear
operator on the right-hand side is of finite rank, and hence its trace is well defined).

It was proved by Harish-Chandra in characteristic zero, and in positive characteristic, by G. Prasad \cite{adler-debacker:bt-lie}*{Appendix B}
for connected groups, and by J. Adler and J. Korman in general \cite{adler-korman:loc-char-exp}*{Appendix}
that there exists  a locally constant function $\theta_{\pi}$ defined on the set of regular elements $\bG(\K)^\reg$ that represents the distribution character:

\begin{equation}\label{eq:rep.reg}
\Theta_{\pi}(f)=\int_{\bG(\K)}\theta_{\pi}(g)f(g)\, dg,
\end{equation}
for all $f\in \tf(\bG(\K)^\reg)$.
The function $\theta_\pi$ is called the Harish-Chandra character.


\subsubsection{Orbital integrals}\label{app:oi}
Let $X\in \fg(\K)$; we denote by $\cO_X$ its adjoint orbit
$\cO_X=\{\Ad(g)X\mid g\in \bG(\K)\}$. Then $\cO_X$ (with the $p$-adic topology) is homeomorphic to
$\bG(\K)/C_G(X)$ (where $C_G(X)$ is the stabilizer of $X$);
{given Haar measures on $\bG(\K)$ and $C_G(X)$,} the space
$\bG(\K)/C_G(X)$ carries a $\bG(\K)$-invariant quotient measure.
For the fields $\K$ of characteristic zero,
it was proved by Deligne and Ranga Rao
\cite{ranga-rao:orbital} that when transported to the orbit of $X$,
this measure  is a Radon measure on $\fg(\K)$, i.e., it is finite on compact subsets of $\fg(\K)$ (strictly speaking, it is the group version of this statement that is proved
in \cite{ranga-rao:orbital}, but in characteristic zero this is
equivalent to the Lie algebra version). We denote this quotient
measure on $\bG(\K)/C_G(X)$ by $d^{\ast}g$;
then the \emph{orbital integral} at $X$ is
 the distribution $\Phi_X$ on $\tf(\fg(\K))$ defined by
$$
\Phi_X(f)=\int_{\bG(\K)/C_G(X)}f(\Ad(g) X)d^\ast g.$$
For the fields of good positive characteristic, convergence of this integral is proved by
McNinch \cite{mcninch:nilpotent}.
We emphasize that the orbital integral, as a distribution, depends on the normalization of Haar measures on $\bG(\K)$ and on $C_G(X)$, which together determine a normalization of the invariant measure on the orbit of $X$.
In this paper, instead of fixing these normalizations, we use the normalization of the measures on the orbits that comes
from a family of volume forms obtained from the symplectic forms on co-adjoint orbits, see
\S \ref{sub:noi}.

\subsubsection{Fourier transform}\label{sub:FT}
Given an additive  character $\Lambda$ of $\K$, we can define the Fourier transform
 on the Lie algebra $\fg(\K)$, which maps functions on $\fg(\K)$ to functions
on $\fg^\ast(\K)$.
\begin{defn}\cite{debacker:homogeneity}*{\S 3.1}
Let $dX$ be a Haar measure on $\fg(\K)$.
For any $f\in\tf(\fg(\K))$, let
\begin{equation*}
\hat f(\lambda)=\int_{\fg(\K)}f(X)\Lambda(\lambda(X))\, dX,
\end{equation*}
where $\lambda\in \fg^\ast(\K)$.
\end{defn}

The Fourier transform on $\fg^\ast(\K)$ is defined similarly.

\begin{rem} As pointed out in \cite{adler-debacker:mk-theory}*{\S 0},
there are in fact three objects appearing here: $\fg(\K)$, its linear dual
$\fg^\ast(\K)$, and  its Pontryagin dual $\widehat \fg(\K)$.
The choice of the character $\Lambda$
is equivalent to the choice of an identification of $\fg^\ast(\K)$ with
 $\widehat \fg(\K)$.
\end{rem}

 From now on, we will assume {that the characteristic of $\K$ is large enough so that Proposition  \ref{subsub:hy2} holds. }
Then one can use the bilinear form $\langle,\rangle$
from Proposition \ref{subsub:hy2}  to identify
$\fg(\K)$  with $\fg^\ast(\K)$.
With this identification, the definition of Fourier transform for a function
$f \in \tf(\fg(\K))$ takes the form:
\begin{equation*}
\hat f(Y)=\int_{\fg(\K)}f(X)\Lambda(\langle X, Y \rangle)\, dX,
\end{equation*}
and $\hat f$ is again a locally constant compactly supported
function on $\fg(\K)$.

With the identification of $\fg(\K)$ with $\fg^\ast(\K)$ given by the
form $\langle, \rangle$,
for a distribution $T$ on $\tf(\fg(\K))$,
its Fourier transform is defined to be
$$\widehat T (f)= T(\hat f).$$

\subsection{Local integrability theorems}\label{sub:HCtheorems}
When the field $\K$ has \emph{characteristic zero}, and $\bG$ is connected,
the following facts are due to Howe \cite{howe:fourier} and
Harish-Chandra \cite{hc:submersion}, \cite{hc:queens}:
\begin{enumerate}
\item\label{HC:character}
{For an admissible representation $\pi$ of $\bG(\K)$, its Harish-Chandra character
$\theta_\pi$ is a nice function on $\bG(\K)$; and in particular,
the representation of the distribution character (\ref{eq:rep.reg}) holds for
\emph{all} $f\in C_c^\infty(\bG(\K))$.}

\item\label{HC:orb-int} For an arbitrary
element $X\in \fg(\K)$ the Fourier transform of the orbital
integral $\Phi_X$ is represented by a
locally constant function $\widehat\mu_X$ supported on $\fg(\K)^\reg$:
$$\Phi_X(\widehat f)=\int_{\fg(\K)}f(g)\widehat\mu_X(g)\, dg
$$
for $f\in \tf(\fg(\K))$; and the function
$\widehat\mu_X$ {is nice.}

\end{enumerate}

Clozel \cite{clozel:nonconnected} extended these results
to the case of nonconnected $\bG$, still in characteristic zero.

In positive characteristic,
the existence of the locally constant function $\widehat\mu_X$  of (\ref{HC:orb-int}) on $\fg^\reg$, such that  the integral in (\ref{HC:orb-int}) converges for the test functions $f$
\emph{with support contained in $\bG(\K)^\reg$}  is proved by R. Huntsinger \cite{adler-debacker:mk-theory}*{Appendix A}, assuming the characteristic is large enough so that the orbital integrals are, indeed, distributions.

 For $\GL_n$, $\SL_n$, and their inner forms, {the statements (\ref{HC:character}) and (\ref{HC:orb-int}) }
 in positive characteristic were proved by Rodier \cite{rodier:loc-int} and Lemaire \cite{lemaire:gl_n}, \cite{lemaire:gl_n(D)}, \cite{lemaire:sl_n(D)}.
For general groups, we prove the analogues of these statements in this paper (as they have been out of reach up to now).

\subsection{Some spaces of distributions}\label{sub:inv.distr}
Everything in this short section
is quoted from \cite{debacker:homogeneity}.
Here we state the key result about the distributions with bounded support, which, in this precise
quantitative version and this generality is due to DeBacker. Recall the definitions first.

\begin{defn}
Let  $J(\fg(\K))$ denote the space of $\bG(\K)$-invariant distributions on
$\fg(\K)$, and
$J(\fg(\K)_r)$ denote the space of $\bG(\K)$-invariant distributions on
$\fg(\K)$ with support in $\fg(\K)_r$  {(where $\fg(\K)_r$ is the $G$-domain defined in
\S \ref{sub:MP} as a union of Moy-Prasad filtration lattices)}.
We use the similar notation $J(\fg^\ast(\K))$, $J(\fg^\ast(\K)_r)$
for the dual Lie algebra.
Let $J(\cN)$ denote the space of $\bG(\K)$-invariant distributions
whose support is contained in the set of nilpotent elements $\cN$.
Let $\cO(0, \K)$ denote the set of nilpotent orbits in $\fg(\K)$.
\end{defn}

\begin{defn}
Let $\cD_r$ be the space of functions on $\fg(\K)$ that can be represented as
a  {finite sum} $f=\sum f_i$, where $f_i$ a complex-valued, compactly supported function on
$\fg(\K)$, invariant under $\fg(\K)_{y_i, r}$ for some $y_i\in \scB(\bG, \K)$.
\end{defn}

\begin{rem}\label{rem:FTsupport}
We observe that with our choice of the conductor of the character $\Lambda$
 {(see \S \ref{sub:HM})},
if a test function $f$ on $\fg(\K)$ lies in the space
$\cD_r$, then the support of its Fourier transform $\hat f$ is contained
in $\fg(\K)_{(-r)^+}$, and if the support of $f$ is contained in $\fg(\K)_r$, then $\hat f \in \cD_{(-r)^+}$.
\end{rem}

The following statement is the summary of the part of the main result of
\cite{debacker:homogeneity} that  is used in this paper.

\begin{thm}\cite{debacker:homogeneity}*{Theorem 2.1.5, Corollary 3.4.6 and Remark 2.1.7}
\label{thm:distr}
Suppose all the hypotheses mentioned in \S \ref{sub:hypo} hold.
If $r\in \R$, then the distributions
$\{\res_{\cD_{r}}\Phi_\cO\}_{\cO\in \cO(0, \K)}$
form a basis of $\res_{\cD_{r}}J(\cN)$, and
$$\res_{\cD_{r}}J(\fg(\K)_{r})=\res_{\cD_{r}}J(\cN).$$
\end{thm}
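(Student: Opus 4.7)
The plan is to prove the two statements essentially simultaneously by making explicit the ``nilpotent approximation'' of an invariant distribution at scale $r$. First I would record the easy inclusion $\res_{\cD_{r}} J(\cN) \subseteq \res_{\cD_{r}} J(\fg(\K)_{r})$: since $0 \in \fg(\K)_{x,r}$ for every $x\in \scB(\bG,\K)$ and every $r$, the $\bG(\K)$-domain $\fg(\K)_r$ contains the nilpotent cone $\cN$, so any invariant distribution supported on $\cN$ trivially has support in $\fg(\K)_r$. This reduces the equality to the reverse (homogeneity) inclusion.

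For the reverse inclusion, my strategy is a scaling/retraction argument. Given $T\in J(\fg(\K)_r)$ and $f\in \cD_r$, the test function $f$ is invariant under $\fg(\K)_{y,r}$ for some $y\in \scB(\bG,\K)$, so morally $T(f)$ only sees the image of $T$ in a finite collection of coset data on the building. The key is then to use cocharacters $\lambda:\mathbb{G}_m\to \bG$ together with the definition of nilpotence ($\lim_{t\to 0}\Ad(\lambda(t))X=0$) to contract the support of $T$ onto $\cN$, while exploiting $\bG(\K)$-invariance of $T$ to absorb the contraction. A quantitative version of this, using the fact that the Moy-Prasad filtration $\fg(\K)_{x,s}$ interacts well with $\Ad(\lambda(t))$ for suitable $x$, should yield a distribution $T'\in J(\cN)$ with $\res_{\cD_r}T=\res_{\cD_r}T'$.

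With the homogeneity equality in hand, the basis statement reduces to showing that, under our hypotheses, the finitely many $\Phi_\cO$ for $\cO\in \cO(0,\K)$ already span $J(\cN)$ and remain linearly independent after restriction to $\cD_r$. Spanning can be proved by induction on the partial order of orbit closures: on the open dense nilpotent orbit the orbital integral is nonzero, and one peels off a multiple of $\Phi_\cO$ to reduce the support of a given $T\in J(\cN)$ to a proper closed union of orbits. Linear independence after restricting to $\cD_r$ requires, for each nilpotent orbit $\cO_0$, the construction of a test function $f_{\cO_0}\in \cD_r$ whose $\bG(\K)$-saturated support meets $\cO_0$ but avoids all orbits not containing $\cO_0$ in their closure; this can be done by choosing a representative $X\in \cO_0$, a small parahoric neighbourhood of $X$ in a suitable $\fg(\K)_{y,r}$-coset, and averaging.

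The main obstacle will be making the homogeneity step rigorous. The informal ``contraction by $\Ad(\lambda(t))$'' argument hides a substantial geometric input: one must track how the $\bG(\K)$-action moves support between apartments of $\scB(\bG,\K)$ and verify that all error terms arising from the mismatch between the filtration-theoretic retraction and the orbit-theoretic one vanish when paired against $\cD_r$. This is where tameness of the relevant extensions and the existence of enough Moy-Prasad-compatible cocharacters enter essentially, and it is exactly the quantitative content that DeBacker isolates in his homogeneity analysis.
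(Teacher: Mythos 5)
You should first note that the paper does not prove this statement at all: it is quoted verbatim as Theorem 2.1.5, Corollary 3.4.6 and Remark 2.1.7 of \cite{debacker:homogeneity}, and the intended justification is simply the citation. So the real question is whether your sketch amounts to a proof of DeBacker's homogeneity theorem, and it does not. Even the ``easy'' inclusion is misjustified: the fact that $0\in\fg(\K)_{x,r}$ for all $x$ gives only $0\in\fg(\K)_r$, not $\cN\subseteq\fg(\K)_r$. The correct argument is that for nilpotent $X$ with an associated cocharacter $\lambda$, conjugating by $\lambda(t)$ and, equivalently, translating the base point $x$ along the direction of $\lambda$ in the building places $X$ in $\fg(\K)_{x,s}$ for arbitrarily large $s$; this is how one sees $\cN=\bigcap_s\fg(\K)_s$.

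The two substantive assertions are exactly the ones your sketch leaves open. For the inclusion $\res_{\cD_r}J(\fg(\K)_r)\subseteq\res_{\cD_r}J(\cN)$, the ``contract the support by $\Ad(\lambda(t))$'' heuristic does not work as stated, because an invariant distribution is unchanged under conjugation and its support is a union of orbit closures, most of which are not contracted into $\cN$ by any single cocharacter; you yourself flag this step as the main obstacle. DeBacker's actual argument is of a different nature: he shows that, modulo the functions killed by every element of $J(\fg(\K)_r)$, the space $\cD_r$ is spanned by characteristic functions of \emph{degenerate cosets} $X+\fg(\K)_{x,r^+}$ with $X\in\fg(\K)_{x,r}$ degenerate, and then proves the geometric theorem (requiring the hypotheses on $p$) that every such coset meets $\cN$, in a single well-defined orbit. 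This last point is also what makes the linear independence of $\{\res_{\cD_r}\Phi_\cO\}$ work: your proposed separating functions supported near a representative $X\in\cO_0$ must additionally lie in $\cD_r$, i.e.\ be invariant under some $\fg(\K)_{y,r}$, and producing such a function whose $\bG(\K)$-saturation meets $\cO_0$ but no smaller orbit is precisely the degenerate-coset matching, not something obtained by ``averaging a small neighbourhood''. In short, the sketch reproduces the statement's architecture but omits the geometric input that constitutes the theorem; for the purposes of this paper the correct move is to cite \cite{debacker:homogeneity} rather than to reprove it.
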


\subsection{Local character expansion}
For an admissible representation $\pi$ of $\bG(\K)$, we denote its \emph{depth}
(defined in \cite{moy-prasad:k-types}*{Theorem 5.2})
by $\rho(\pi)$.

\begin{thm}(\cite{debacker:homogeneity}*{Theorem 3.5.2})\label{thm:expansion}
Let $\K$ be a
complete non-Archimedean local field with finite residue field of
characteristic $p$.
Let $\pi$ be an admissible
representation of ${\bG}(\K)$. Choose $r$ such that $\fg_r=\fg_{\rho(\pi)^+}$.
Suppose $p$ is sufficiently large so that the hypotheses
from \S \ref{sub:hypo} are satisfied. Suppose also that Hypothesis \ref{subsub:exp} holds. Then there exist constants
$c_{\cO}(\pi)\in \C$ indexed by $\cO(0, \K)$ such that
$$\theta_{\pi}(\mexp(X))=\sum_{\cO\in \cO(0, \K)}c_\cO(\pi)\fto(X)$$
for all $X\in \fg(\K)_r\cap \fg(\K)^\reg$.
\end{thm}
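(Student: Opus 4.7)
The plan is to transfer the character from the group to the Lie algebra via the mock exponential $\mexp$ and then apply the distribution-theoretic machinery of DeBacker. Define an invariant distribution $T_\pi$ on $C_c^\infty(\fg(\K)_r)$ by $T_\pi(f) := \Theta_\pi(f\circ\mexp^{-1})$; this is well-defined because $\mexp(\fg(\K)_r)=\bG(\K)_r$ by Hypothesis \ref{subsub:exp}(1)(a), and it is $\bG(\K)$-invariant by Hypothesis \ref{subsub:exp}(2). The goal then becomes exhibiting constants $c_\cO(\pi)$ for which $T_\pi$ is represented on $\fg(\K)_r\cap\fg(\K)^\reg$ by the function $X\mapsto \sum_{\cO\in\cO(0,\K)} c_\cO(\pi)\,\fto(X)$.

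Next I would analyze the Fourier transform of $T_\pi$. The hypothesis $\fg_r=\fg_{\rho(\pi)^+}$ is precisely the condition that the depth-$\rho(\pi)$ interpretation of $\pi$ via Moy-Prasad filtrations controls which test functions probe $\Theta_\pi$ nontrivially. Concretely, for $f\in \cD_r$ one argues that $T_\pi(f)$ depends only on the ``nilpotent part'' of $f$, in the following sense: by Remark \ref{rem:FTsupport}, if one takes a function $h$ with support in $\fg(\K)_r$ then $\hat h\in\cD_{(-r)^+}$, and conversely. Via this duality, $\widehat{T_\pi}$ (restricted to $\cD_r$) becomes an invariant distribution supported in $\fg(\K)_r$.

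Third, I would invoke Theorem \ref{thm:distr}: the restriction of any such invariant distribution to $\cD_r$ lies in the span of $\{\Phi_\cO\}_{\cO\in\cO(0,\K)}$. This produces constants $c_\cO(\pi)$ with
\[
\widehat{T_\pi}|_{\cD_r}=\sum_{\cO\in\cO(0,\K)}c_\cO(\pi)\,\Phi_\cO,
\]
so that after dualizing, $T_\pi(f)=\sum_\cO c_\cO(\pi)\,\widehat\Phi_\cO(f)$ for every $f\in C_c^\infty(\fg(\K)_r)$. Invoking Theorem \ref{thm:orb int loc int} (the main result of this paper, via Theorem \ref{thm:ssint}) each $\widehat\Phi_\cO$ is represented on the regular set by the nice function $\fto$, so passing from a distributional identity to an identity of locally constant functions on $\fg(\K)_r\cap\fg(\K)^\reg$ yields $\theta_\pi(\mexp(X))=\sum_\cO c_\cO(\pi)\,\fto(X)$, as required.

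The main obstacle is the second step: showing that $\widehat{T_\pi}|_{\cD_r}$ is in fact supported (in the distributional sense relevant to $\cD_r$) on the nilpotent cone, rather than just in $\fg(\K)_r$. In characteristic zero this is a classical Howe-conjecture-type homogeneity statement; in large positive characteristic it requires the full strength of DeBacker's parametrization of invariant distributions by nilpotent orbits via the Moy-Prasad filtration, which depends on all the hypotheses collected in \S \ref{sub:hypo} (finiteness of nilpotent orbits, convergence of nilpotent orbital integrals, and existence of the invariant form of Proposition \ref{subsub:hy2}). Once one has this homogeneity, the rest of the argument is essentially a formal Fourier-analytic unwinding.
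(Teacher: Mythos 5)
The first thing to note is that the paper does not prove this statement: it is quoted verbatim as Theorem 3.5.2 of \cite{debacker:homogeneity}, and Appendix A is explicitly a survey of external classical results. So there is no internal proof to compare against; what you have written is an attempted reconstruction of DeBacker's argument.

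As such a reconstruction, your skeleton is the right one --- it is essentially the same dualization the paper itself performs in the proof of Theorem \ref{thm:general}: pull the character back to $\fg(\K)_r$ via $\mexp$ to obtain an invariant distribution $T_\pi$, identify the restriction of its Fourier transform with a combination of nilpotent orbital integrals via Theorem \ref{thm:distr}, and dualize back using Remark \ref{rem:FTsupport}. But the step you flag as ``the main obstacle'' is not a side issue to be noted: it is the entire content of the theorem, and your proposal does not supply it. Concretely, what must be proved is that $\res_{\cD_{(-r)^+}}\widehat{T_\pi}$ lies in $\res_{\cD_{(-r)^+}}J(\fg(\K)_{(-r)^+})$ (note the indices: for $f$ supported in $\fg(\K)_r$ one has $\hat f\in\cD_{(-r)^+}$, so the relevant restriction is to $\cD_{(-r)^+}$, not $\cD_r$, and the relevant support condition is $\fg(\K)_{(-r)^+}$, not ``supported in $\fg(\K)_r$'' as you write --- only \emph{after} applying Theorem \ref{thm:distr} does the nilpotent cone appear). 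This is precisely where the hypothesis $\fg_r=\fg_{\rho(\pi)^+}$ and the Moy--Prasad theory of unrefined minimal $K$-types enter, and establishing it occupies most of DeBacker's paper; asserting that the depth ``controls which test functions probe $\Theta_\pi$ nontrivially'' is not an argument. A smaller point: invoking Theorem \ref{thm:orb int loc int} for the representability of $\widehat\Phi_\cO$ on the regular set is unnecessary (and anachronistic for a 2002 theorem) --- Huntsinger's result quoted in \S\ref{sub:HCtheorems} already provides the locally constant function $\fto$ on $\fg^\reg$, and niceness plays no role in the expansion itself.
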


We observe that the coefficients $c_{\cO}(\pi)$ are defined only after the
field $\K$ is fixed; at present we do not have any general approach that
would yield information about the way they depend on the field,
since such an approach to begin with would require a field-independent way to
parameterise representations. For toral very supercuspidal representations
the beginnings of such a parameterisation are discussed in
\cite{cluckers-cunningham-gordon-spice}.

\subsection{Local character expansion near a tame semisimple element}\label{sub:loc_gamma}

Let $\bG$, $\K$, and  an admissible representation $\pi$ of $\bG(\K)$  be as above. For a semisimple element $\gamma\in \bG(\K)$,
its centralizer $C_G(\gamma)$ is a reductive (not necessarily connected) algebraic group
over $\K$. There is a finite list (depending only on the root datum of $\bG$) of the possible root data for the (connected components of) the centralizers of semisimple elements in $\bG(\K)$. We will denote a connected reductive group on this list by
${\bf M}^\circ$, and its Lie algebra by $\fm$.

Assume that the characteristic of $\K$ is large enough so that all the hypotheses of \ref{sub:hypo} hold for every possible $\bf M^\circ$ (the connected component of the centralizer of a semisimple element of $\bG$) in the place of
$\bG$. We also need to assume Hypothesis \ref{subsub:exp} for every such
$\bM^\circ$ (more precisely, we need the slightly weaker Hypothesis 8.5 from \cite{adler-korman:loc-char-exp}).
  We observe that when the characteristic of $\K$ is large enough, then both
$\bG^\circ$ and $\bM^\circ$ split over the same tame extension; thus, Hypothesis 8.3 of \cite{adler-korman:loc-char-exp} holds; therefore, the restriction of the mock
exponential map for $\bG(\K)$ satisfies the conditions of Hypothesis 8.5 from
\cite{adler-korman:loc-char-exp}.  {Thus, when the residue characteristic of $\K$ is large enough, it is sufficient to assume our Hypothesis
\ref{subsub:exp}, {for some $r>0$}.}

We need to introduce some more notation from \cite{adler-korman:loc-char-exp}.

Let $\gamma\in \bG^\sem(\K)$, and $C_G(\gamma)=\bM(\K)$ as above.
We can consider Moy-Prasad filtration subgroups  and the corresponding lattices in
$\fm$  {(as defined  in \S\ref{sub:MP}, with $\bM^\circ$ in place of $\bG$)}; so we have the subgroups $\bM^\circ(\K)_{x, r}$ for $x\in \cB(\bM^\circ, \K)$, etc.
Let $M_r=\bM(\K)_r$.
Following \cite{adler-korman:loc-char-exp}*{\S 4}, define, for $m\in M$:
$$D_{G/M}(m)= \det\left((\Ad(m)-1)\vert_{\fg/\fm}\right).$$
(with the convention that when $M=G$, $D_{G/M}\equiv 1$).
Further, for $r\ge 0$, let
\begin{equation*}
\begin{aligned}
&M_r'=\{m\in M_r\mid D_{G/M}(\gamma m)\neq 0\}  \\
&M_r''=\{ m\in M_r\mid \gamma m\in \bG(\K)^\reg\}.
\end{aligned}
\end{equation*}
Then $M_r''\subset M_r'$ are dense open subsets of $M_r$.

For an element $\gamma \in \bG(\K)^\sem$, Adler and Korman introduced the notion of
\emph{singular depth} $s(\gamma)$ (see \cite{adler-korman:loc-char-exp}*{Definition 4.1}); we will not need the precise definition here.
The main result we need is the following theorem (we are using our earlier notation $\theta_\pi$ for the function representing the distribution character of the representation $\pi$).

Let $\theta$ be the distribution on $M_r$ obtained from $\Theta_\pi$ via descent, as
explained in \cite{adler-korman:loc-char-exp}*{\S 7}. It is represented on $M_r''$ by
a locally constant function $\theta$, see \cite{adler-korman:loc-char-exp}*{Lemma 7.5}.
Then for $\theta$, an analogue of the local character expansion (in terms of the Fourier transforms of nilpotent orbital integrals on $\bM$) holds:

\begin{thm}\label{thm:expansion_ss}(\cite{adler-korman:loc-char-exp}*{Corollary 12.10}).
Let $r>\max\{\rho(\pi), 2s(\gamma)\}$.
Then
$$  {\theta}(\gamma \mexp(Y))=\sum_{\cO\in \cO_{\fm}} c_{\cO}\widehat\mu_{\cO}(Y)$$
for all $Y\in \fm_r'':=\mexp^{-1}(M_r'')$,
 {for some complex coefficients $c_{\cO}$ that depend on the representation $\pi$,
and where $\cO_{\fm}$ is the set of nilpotent orbits in $\fm$.}
\end{thm}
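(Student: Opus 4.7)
The plan is to obtain this expansion as an application of Harish--Chandra descent combined with DeBacker's local character expansion on the centralizer group $\bM^\circ$. The main ingredients are already in place in the excerpt: DeBacker's expansion near the identity (Theorem \ref{thm:expansion}), the mock exponential map for $\bM^\circ$ (Hypothesis \ref{subsub:exp}), and the existence of nice Fourier transforms of nilpotent orbital integrals on $\fm(\K)$ (Theorem \ref{thm:orb int loc int} applied with $\bM^\circ$ in place of $\bG$).

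First, I would construct $\theta$ by descent: given a test function $F$ on $\bG(\K)$ supported in a small invariant neighborhood of $\gamma$, define a test function $F^\gamma$ on $M$ supported near the identity, by an integration over $\bG(\K)/M$ that reflects the fact that every regular semisimple element sufficiently close to $\gamma$ is $\bG(\K)$-conjugate to a unique element of the form $\gamma m$ with $m$ close to $1$ in $M$. Then one defines $\theta(F^\gamma) := \Theta_\pi(F)$; the content of \cite{hc:van-dijk}*{Ch.~6} is that this is well defined and produces an $M$-invariant distribution, and that on the regular set $M_r''$ the function $\theta$ is given by the classical formula $|D_{G/M}(\gamma m)|^{1/2} \theta_\pi(\gamma m)$ (up to sign and measure normalizations). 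The role of the singular depth $s(\gamma)$ is precisely to measure how far from $\gamma$ one has to go before this identification between neighborhoods of $\gamma$ in $G$ and neighborhoods of $1$ in $M$ breaks down; the condition $r > 2s(\gamma)$ guarantees clean descent on $M_r$.

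Next, I would verify that $\theta$, as an $M$-invariant distribution on $M_r$, is in a form to which DeBacker's theorem applies. The key point is that the depth of $\pi$ controls the support behaviour of $\theta$: once $r > \rho(\pi)$, any test function on $M$ whose pullback (via $\mexp^{-1}$ and descent) lies in the space $\cD_r$ of the Lie algebra $\fm$ is paired with $\theta$ only through its restriction to $\fm(\K)_r$. By Theorem \ref{thm:distr} applied to $\bM^\circ$, the restriction of any $\bM^\circ(\K)$-invariant distribution on $\fm(\K)_r$ to $\cD_r$ is a finite linear combination of nilpotent orbital integrals $\Phi_\cO$ with $\cO \in \cO_{\fm}$. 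Transporting this to $M$ via the mock exponential map and then taking Fourier transforms (using Remark \ref{rem:FTsupport} to ensure the support conditions propagate correctly) yields the stated expansion $\theta(\gamma \mexp(Y)) = \sum_{\cO} c_\cO \widehat\mu_\cO(Y)$ for $Y\in \fm_r''$.

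The hard part will be Step 2, namely showing that the distribution $\theta$ obtained by descent genuinely has the same ``depth'' (in the sense controlling which $\cD_r$ it annihilates) as the original representation $\pi$. This is not formal: one has to track how the Jacobian $|D_{G/M}(\gamma \mexp(Y))|$ interacts with the Moy--Prasad filtration on $\fm$, and check that the singular depth $s(\gamma)$ is exactly the correction needed so that $r > \max\{\rho(\pi), 2s(\gamma)\}$ forces the descended distribution to be supported (in the relevant sense) on $\fm(\K)_r$. This is where the bulk of the technical work in \cite{adler-korman:loc-char-exp} lies; the remaining steps are then relatively mechanical applications of the results already assembled in the excerpt.
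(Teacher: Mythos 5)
This statement is not proved in the paper at all: it is quoted verbatim as Corollary~12.10 of \cite{adler-korman:loc-char-exp} and used as an external input (it sits in Appendix~A among the ``classical results'' the paper imports). So there is no internal proof to compare your argument against; what you have written is, in effect, an outline of the strategy of Adler--Korman's paper itself.

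As such an outline, your sketch identifies the right ingredients (Harish-Chandra descent to $M=C_G(\gamma)$, a homogeneity statement on $\fm$ in the style of DeBacker, transport via the mock exponential, Fourier transform), but it is not a proof, and the gap is exactly where you locate it. Theorem~\ref{thm:distr} applies to an invariant distribution on $\fm(\K)$ \emph{whose support is contained in} $\fm(\K)_r$ (or, dually, whose restriction to $\cD_{r}$ is controlled); the descended distribution $\theta$ lives on the group $M_r$, and nothing in the material assembled in this paper shows that $\theta\circ\mexp$ satisfies the required support/homogeneity hypothesis. Establishing that --- i.e.\ that $\res\,(\theta\circ\mexp)$ on the appropriate test-function space lies in $\res\, J(\cN_\fm)$, which is where the depth $\rho(\pi)$, the singular depth $s(\gamma)$, the comparison of Moy--Prasad filtrations on $\fg$ and $\fm$, and the Jacobian $D_{G/M}$ all enter --- is precisely the content of Adler--Korman's Sections~8--12, and your Step~2 defers it to that reference rather than supplying it. One further caution: your claim that ``once $r>\rho(\pi)$, any test function \dots is paired with $\theta$ only through its restriction to $\fm(\K)_r$'' is not formal and is in fact the hardest assertion in the whole argument; as stated it conflates the depth of $\pi$ as a representation of $\bG(\K)$ with a depth bound for the descended object on $\bM^\circ(\K)$, and bridging these is exactly what the hypothesis $r>2s(\gamma)$ and Adler--Korman's Hypotheses~8.3 and~8.5 are for. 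In short: correct roadmap, but the theorem remains a citation, not something proved here.
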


\section{Constructible exponential functions}\label{sec:mef}
Here we recall briefly the main notions and notation used in motivic
integration; we refer to the original articles \cite{cluckers-loeser},
\cite{cluckers-loeser:fourier}, \cite{cluckers-loeser:mixed} for complete details, and to \cite{cluckers-loeser:ax-kochen}, \cite{GY}, and especially \cite{cluckers-hales-loeser}
for exposition.

\subsection{Denef-Pas language and definable subassignments}\label{sub:DP}
Denef-Pas language is a first order language of logic designed for working with valued fields. The formulas in this language can have variables of three sorts:
the valued field sort, the residue field sort, and the value group sort (in our setting, the value group is always assumed to be $\Z$, so we call this sort the $\Z$-sort). Here is the list of symbols used
to denote operations and binary relations in this language:
\begin{itemize}
\item In the valued field sort: $+$ and $\times$ for the binary operations of
 addition and multiplication; $\ord(\cdot)$ for the valuation (it is a function from the valued field sort to the $\Z$-sort), and $\ac(\cdot)$ for the so-called angular component -- a function from the valued field sort to the residue field sort (more about this function below).
\item In the residue field sort: $+$ and $\times$ for addition and multiplication.
\item In the $\Z$-sort: $+$ for addition; the binary relations
$\ge$, and $\equiv_n$ for the congruence modulo $n$ for every $n\in \N$.
\item There is also the binary relation $=$ in every sort.
\end{itemize}

Initially, the symbols for the constants are just $0$ and $1$ in every sort, and the symbol $\infty$ in the $\Z$-sort to denote the valuation of $0$ (with the natural rules with respect to $\infty$ and all the operations and relations, such as $\infty\ge n$ is true for all $n$, etc.).

Given a number field $E$ with the ring of integers $\ri$, one can make a variant of Denef-Pas language with coefficients in $\ri\llb t \rrb$ in the valued
field sort. This means that a  {constant} symbol is formally added to the valued field sort for every element of $\Omega\llb t \rrb$.
We denote this language by $\cL_\Omega$. In this paper we use the language $\cL_\Z$;
however, since there might be applications where one wishes to work over a fixed number field $E$ that is different from $\Q$, here we discuss this slightly more general setting.

The formulas in $\cL_\Omega$ are built from the symbols for variables in every sort and constant symbols, using the listed above operations and
relations, and conjunction, disjunction, negation, and the quantifiers $\forall$ and $\exists$.

Given a valued field $\K$ that is an algebra over $\Omega$ \emph{with the choice of the uniformizer of the valuation $\varpi$}, one can interpret
the formulas in $\cL_\Omega$ by letting
the variables range, respectively, over $\K$, the residue field $\rf_\K$
of $\K$, and
$\Z$ (which is the value group of $\K$).
The function symbols $\ord(x)$ and $\ac(x)$ are interpreted as follows:
$\ord(x)$ denotes the valuation of $x$, and $\ac(x)$ denotes the so-called angular component of $x$: if $x$ is a unit, then $\ac(x)$ is the residue of $x$ modulo $\varpi$ (thus, an element of the residue field); for a general $x\neq 0$
define
$\ac(x)=\ac(\varpi^{-\ord(x)}x)$; thus, $\ac(x)$ is the first non-zero coefficient of the $\varpi$-adic expansion of $x$. By definition, $\ac(x)=0$ if and only if $x=0$.

In this way, any formula $\phi(x_1, \dots, x_n, y_1, \dots, y_m, z_1,
\dots z_r)$
with $n$ free (that is, not bound by
quantifiers) variables of the valued field sort, $m$ free variables of
the residue field sort, and $r$ free variables of the $\Z$-sort yields a
subset of $\K^n \times \rf_\K^m \times \Z^r$,
namely those points
$(x_1, \dots, x_n, y_1, \dots, y_m, z_1, \dots z_r)\in \K^n \times \rf_
\K^m \times \Z^r$ where $\phi$ takes the value ``true''. Sets of this
form for some $\cL_\Omega$-formula $\phi$
are called \emph{definable}. A function is called definable if its graph
is a definable set.

Let us (temporarily) denote the category of fields $L$ which admit an injective ring homomorphism  from
$\Omega$ to $L$ by
$\underline{\text{Flds}}_\Omega$. We write $h[n,m,r]$ for the functor
from $\underline{\text{Flds}}_\Omega$
to $\underline{\text{Sets}}$  {that} sends $L$ to $L \llp t \rrp ^n\times
L^m\times \Z^r$.
Any formula $\phi(x_1, \dots, x_n, y_1, \dots, y_m, z_1, \dots z_r)$ as
above in particular induces a map  {sending} any $L \in
\underline{\text{Flds}}_\Omega$ to a subset of $L \llp t \rrp ^n\times
L^m\times \Z^r$.
A map  {obtained} in this way from an $\cL_\Omega$-formula is
called a \emph{definable subassignment of
$h[n,m,r]$} (or simply a \emph{definable subassignment} if we do not
want to specify $n, m, r$).
A similar notion of assignments was first introduced in \cite{denef-loeser:p-adic}.

A morphism of definable subassignments consists of a family of maps
between the corresponding definable sets
for each $L \in \underline{\text{Flds}}_\Omega$, such that the family of graphs
of these maps is a definable subassignment.

\begin{defn}\label{def: def}
The category of definable (in the language $\cL_\ri$)
subassignments of $h[n,m, r]$ with some integers $n, m, r\ge 0$ is
denoted by $\de$.
The category of definable subassignments of $h[0,m,0]$ for some
$m>0 $ is denoted by $\rde$ (thus, the subassignments in $\rde$
are defined by formulas that can \emph{only} have free variables of the
residue field sort).
\end{defn}

We also need the ``relative'' situation: suppose $S\in \de$ is a
definable subassignment. Then one can define $\de_S$ -- the category of definable subassignments over $S$ -- to be the category of definable subassignments with a fixed morphism to $S$ (with morphisms, naturally, defined to be morphisms over $S$).
The category $\rde_S$ consists of subassignments of $S\times h[0,n,0]$
with the projection onto the first coordinate as
the fixed morphism to $S$.
If $X$ is a definable subassignment, we write $X[m,n,r]$ for $X\times h[m,n, r]$.

\subsection{Specialization}\label{sub:spec}
The main point of using the language $\cL_\ri$
is \emph{specialization}, which we survey briefly,  while referring to \cite{cluckers-loeser:ax-kochen}*{\S 6.7} or \cite{GY}*{\S 5} and \cite{cluckers-loeser:mixed} for a more extensive exposition.
{Let
$\cA$ be  the collection of completions of algebraic extensions of the
base field $E$,  and
let $\cB$ be the collection
of positive-characteristic  {local} fields that admit a homomorphism from $\ri$
 -- these are the collections
of fields to which we would like to apply a transfer principle.
Strictly speaking, we should write $\cA_\ri$ and $\cB_\ri$, but $\Omega$ is usually clear from the context; and in the main body of this paper $\Omega=\Z$, so we drop this subscript.}

Let $S$ be a definable subassignment of $h[n,m,r]$ for some $m$, $n$, and $r$; suppose that $S$ is defined by an $\cL_\ri$-formula $\phi$.
Let $\K\in \cA\cup \cB$ be a discretely valued field, with a
choice of the uniformizer of the valuation $\varpi$. Then the formula
$\phi$ can be interpreted in $\K$
 to give a subset $S_\K$ of $\K^n\times \rf_\K^m\times \Z^r$.
The set $S_\K$ is called the \emph{specialization} of the subassignment
$S$ to $\K$.

For two formulas $\phi_1$ and $\phi_2$ defining the same subassignment $S$,
there exists a constant $M$,
such that for $\K\in \cA_{M}\cup \cB_{M}$
their specializations to $\K$ give the same set regardless of which formula we use.
We emphasize that a definable subassignment can be specialized \emph{both}
to the fields
of characteristic zero and those of positive characteristic, and the specialization is well defined as long as the residue characteristic is sufficiently large.

\subsection{Motivic  exponential functions}\label{sub:cf}
For a definable subassignment $X$, the ring
of the so-called \emph{constructible motivic
functions} on $X$, denoted by $\cC(X)$, is defined in
\cite{cluckers-loeser}.  We will use a slight generalization of this, described below in 
\S \ref{subsub:gen}.
The elements of $\cC(X)$ are,
essentially, formal constructions defined using the language $\cL_\ri$.
For the sake of brevity (and consistency with \cite{TI}), we  drop the word
``constructible'' everywhere from now on, and refer to the elements of $\cC(X)$
as ``motivic functions''.
An important feature of
motivic functions is specialization to functions on definable subsets of
affine spaces over
discretely valued fields.
Namely, let $F\in \cC(X)$.
Let $\K\in \cA\cup \cB$ be a non-Archimedean local field.
Let $\varpi$ be the uniformizer of the
valuation on $\K$. Then
the motivic function $F$ specializes to a
$\Q$-valued function $F_\K$ on $X_\K$, for all fields
$\K$ of residue characteristic bigger than a constant that depends only on
  {the choice of} the
$\cL_\ri$-formulas defining $F$ and $X$.
As explained in \cite{cluckers-hales-loeser}*{\S 2.9}, one can tensor the
ring $\cC(X)$ with $\C$, and then the specializations $F_\K$ of
elements of $\cC(X)\otimes \C$ form a $\C$-algebra of functions on $X_\K$, which we denote by $\cC_\K(X_\K)$.
See \cite{TI}*{\S 4.2.5} for a general form of a motivic function.

Further, for a subassignment $X$ as above, the ring of motivic
constructible exponential functions
${\cC}^{\exp}(X)$ is defined
in \cite{cluckers-loeser:fourier}.
The elements of this ring specialize to what we call
($p$-adic) constructible exponential functions.
In the motivic setting, we also drop the word ``constructible'' from now on.
In order to get a specialization of a motivic exponential function, one needs to choose,
in addition to a local field $\K$  with uniformizer $\varpi$, an additive
character $\Lambda$ of $\K$ satisfying the condition
\begin{equation}\label{eq:D_K}
\Lambda(x)=\exp\left(\frac{2\pi i}{p}{\text{Tr}}_{\rf_\K}(\bar x)\right)
\end{equation}
for $x\in \ri_\K$.
Here, $p$ is the characteristic of $\rf_\K$,
$\bar x\in \rf_\K$ is the reduction of $x$ modulo $\varpi$, and
$\text{Tr}_{\rf_\K}$ is the trace of $\rf_\K$ over its prime subfield
(see \cite{TI}*{\S\S 4.1, 4.2.6} for details).
The set of characters of $\K$ satisfying the condition (\ref{eq:D_K})
is denoted by $\scD_\K$.

Given
a field $\K\in \cA\cup \cB$ as above,
with a uniformizer $\varpi$ and
an additive character $\Lambda$ as in (\ref{eq:D_K}), we consider the $\Q$-algebra
of functions on
$X_\K$ generated by the specializations
of motivic exponential functions. As above, we can tensor it with $\C$; we denote the resulting $\C$-algebra by
${\cC}^{\exp}_{\K, \Lambda}(X_\K)$.
See \cite{TI}*{\S 4.2.9, \S 3.2} for details.

We often need to talk about motivic (respectively,
motivic exponential) functions on the set of $\K$-points of an
algebraic group $\bG$ or its Lie algebra $\fg$. We observe
that any affine algebraic variety $V$
(for example, $V = \bG$ or $V = \fg$)
naturally gives a definable
subassignment of $h[m,0,0]$ with some $m$; let us for the moment denote this subassignment by $\tilde V$. Then $\tilde V_\K=V(\K)$,
for all non-Archimedean local
fields $\K$ of sufficiently large residue characteristic.
However, to keep notation simple, we simply talk about
motivic  functions on $V(\K)$ for a variety $V$, implying that we replace
$V(\K)$ with $\tilde V_\K$; it is in this sense that we talk about
motivic functions on
$\bG(\K)$ or $\fg(\K)$  {in this paper}.

In \cite{cluckers-loeser}, Cluckers and Loeser defined a class ${\mathrm I}C(X)$ of \emph{integrable}
motivic functions, which is closed under  integration with respect to
parameters (where integration is with
respect to the \emph{motivic measure}). Given a local field $\K$ with a choice
of the uniformizer, these functions specialize to
integrable (in the classical sense) functions on $X_\K$, and motivic
integration specializes to the
classical integration with respect to an appropriately normalized
Haar measure, when the
residue
characteristic of $\K$ is sufficiently large.
In \cite{cluckers-loeser:fourier} the definition of ``integrable'' and the notion of motivic integration are
extended to motivic \emph{exponential} functions.
Moreover, there is a notion of ``motivic'' Fourier transform that specializes to the classical Fourier transform.
In \cite{TI}, we provide a more general treatment of the issues of integrability, essentially, proving that any motivic exponential function whose specializations are $L^1$-integrable for almost all $\K\in \cA$ can be ``interpolated'' by a motivic exponential function 
integrable in the sense of 
\cite{cluckers-loeser:fourier} (where by ``interpolation'' we mean that it has the same specilaizations for every $\K\in \cA_M\cup \cB_M$ for a sufficiently large $M$), 
cf. \cite{TI}*{Theorem 4.3.3}. 

\subsubsection{A generalization of motivic exponential functions}\label{subsub:gen}
In some places in this paper, we have to take roots of $q$ (the cardinality of the residue field), e.g., in the notion of ``nice'' in \S \ref{sub:notation},
and in the definition of the measure on $\bG(\K)$ in \S \ref{sub:HM}. To this end, we
generalize the notion of motivic (exponential) functions as follows.
Let $F$ be a motivic (exponential) function on some $S$, let $f:S\to \Z$
be a definable morphism, and let $r \ge 1$ be an integer. We call any
expression $H$ of the form $F \lef^{\frac{1}{r}f}$ a motivic
(exponential) function on $S$, and we call the functions 
$F_\K q_\K^{\frac{1}{r}f_\K}$ on $S_\K$ the specializations $H_{\K}$ of $H$ for
$\K\in \cA_{M}\cup\cB_{M}$ of large residue field characteristic $q_\K$.
We also allow finite linear combinations of such expressions.
All classical results about motivic (exponential) functions easily
generalize to this setting, by splitting $S$ into $r$ disjoint parts
according to $f \mod r$.

\subsubsection{Conventions}\label{sub:convention}
For the sake of brevity, we use the term
``motivic (exponential) function'' a little loosely, in the sense that we  sometimes refer to a $p$-adic function
by this collection of adjectives if it is obtained by specialization from a
 motivic exponential function.
Precisely, we say that a function $f$ on some subset of an affine space over
a non-Archimedean local field $\K$ is  a
motivic (exponential) function if the following conditions hold:
\begin{enumerate}
\item the domain of $f$ is a specialization $S_\K$ of some definable subassignment $S\in \de$; and
\item there exists a motivic  (exponential) function $F$ on $S$,
and in the case when ``exponential'' is relevant, an additive character
$\Lambda\in \scD_\K$, such that
$f=F_\K$ (respectively,  $f=F_{\K, \Lambda}$).
\item   {If $\K$ is allowed to vary, then the definable subassignment $S$ and the motivic
(exponential) function $F$ can be taken independently of $\K$.}
\end{enumerate}
A similar convention applies to integration and Fourier transform; for example, when we integrate  the specialization of a motivic exponential
function (with respect to a $p$-adic Haar measure), we think of the integral as the specialization of the corresponding motivic integral.

Sometimes, we find it convenient to talk about families of motivic functions, of definable sets, or even of definable volume forms. We occasionally use the term ``constructible family'' to emphasize that the objects in question depend on the parameter indexing the family in a definable way. Thus, by a constructible family of motivic  {(exponential)} functions $\{f_a\}_{a\in S}$ on $X$, where $X$ and $S$ are definable subassignments, we mean nothing but a motivic
 {(exponential)} function on $S\times X$.

\subsection{Motivic exponential functions and representatives}\label{sub:repr}
As noted briefly in \S \ref{sub:spec} above, and explained in
\cite{TI} in detail, the specialization of a subassignment (and therefore,
of a motivic exponential function) depends, in principle, on the choice of specific formulas used to define the subassignment and the function in question. Given one such choice of formulas, there exists a constant $M>0$ such that
for the fields $\K\in \cA_{M}\cup\cB_{M}$, the specialization to $\K$ is well defined.  In \cite{TI}, the choice of formulas is referred to as ``the choice of representatives'', meaning that a subassignment is thought of as an equivalence class of formulas.

We observe that in this paper (as well as in all
applications of motivic integration so far) whenever we prove that a certain
object or function is ``motivic'', it automatically comes with a collection of
formulas defining it; that is, the motivic objects always appear with the choice of representatives in the sense of
\cite{TI}*{\S 4.2.2} (we emphasize again that the choice of representatives amounts to a choice of specific formulas defining the given subassignment).
Since all our definable objects
come with a choice of formulas defining them, we can assume that this is the
choice of representatives built into all the constants that provide the lower bounds on residue characteristic in all our results.

\subsection{Transfer of integrability and boundedness}
We quote the transfer of integrability and transfer of boundedness
principles from \cite{TI}.
(For simplicity, we quote the version without parameters, that is,
we take the parameterising space $X$ to be a point in \cite{TI}*{Theorem 4.4.1} and
\cite{TI}*{Theorem 4.4.2}).
\begin{thm}\cite{TI}*{Theorem 4.4.1}\label{thm:ti}
Let $F$ be a motivic  exponential
function on $h[n,0,0]$.
Then there exists $M>0$, such that for the
fields $\K\in\cA_{M}\cup\cB_{M}$,
the truth of the statement that $F_{\K,\Lambda}$
is (locally) integrable
for all $\Lambda \in \scD_\K$
depends only on the isomorphism class of the residue field of $\K$.
\end{thm}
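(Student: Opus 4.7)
The plan is to reduce the analytic question of $L^1$-integrability to a first-order condition about the value group and residue field, to which a Ax--Kochen--Ershov-style transfer applies. The first step is to invoke cell decomposition for Denef--Pas language to write $F$ as a finite sum of motivic exponential functions, each supported on a cell of a standard form. On such a cell, after a suitable definable change of variables, the specialization takes the shape
\[
F_\K(x) = \sum_i \alpha_i(\xi(x))\, q_\K^{\beta_i(x)}\, \Lambda(\gamma_i(x)),
\]
with $\alpha_i$ depending only on residue-field coordinates $\xi(x)$, while $\beta_i$ and $\gamma_i$ are definable functions coming from the value-group and valued-field sorts. The integral of $F_\K$ then factors (via Fubini) as an integral over residue-field fibres times a sum indexed by a definable subset of $\Z^k$.

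The second step is to disentangle the oscillatory part from the magnitude. Since $|\Lambda(\cdot)|=1$, pointwise $|F_{\K,\Lambda}|$ is governed by the motivic (non-exponential) pieces; but $L^1$-integrability can fail by bulk growth even when there is no cancellation, and can hold thanks to cancellation even when $|F|$ is not integrable. For the statement "integrable for \emph{all} $\Lambda\in\scD_\K$", one refines the cell decomposition so that on each piece either the character is constant (and hence $|F|$ is a plain motivic function) or the integral of the character over the cell vanishes for all $\Lambda$ and the piece may be discarded. This reduces integrability to integrability of a non-exponential motivic function.

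The third step handles the non-exponential case. After Fubini over the residue-field factor, integrability reduces to convergence of sums $\sum_{\xi\in\Z^k} q_\K^{a(\xi)}$ with $a$ Presburger-definable. By Presburger quantifier elimination, the convergence of such a sum is equivalent to a Boolean combination of linear inequalities describing how fast $a(\xi)\to-\infty$ along each unbounded direction; this is a first-order condition whose truth depends on $\K$ only through $q_\K$ (and lower-order residue-field data, after the first step). Combined with the first step, the truth of integrability becomes a first-order statement about residue-field coefficients, to which the Cluckers--Loeser transfer principle applies, giving the desired dependence only on the isomorphism class of $\rf_\K$ once the residue characteristic exceeds the bound $M$ governed by the representatives of $F$.

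The main obstacle is the second step: decoupling the oscillation $\Lambda$ from the magnitude within the motivic framework. The difficulty is that one must simultaneously handle all characters $\Lambda\in\scD_\K$, and that refining cells until the character becomes either trivial or totally cancelling requires a careful interplay between the $\ac$ function and the additive character, together with an interpolation of oscillatory $L^1$ functions by genuinely integrable motivic functions in the sense of \cite{cluckers-loeser:fourier}; this is essentially the content of \cite{TI}*{Theorem 4.3.3}, and once available it bridges the analytic notion of integrability to the algebraic-logical setting where transfer is routine.
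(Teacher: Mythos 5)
This statement is not proved in the paper you are reading: Theorem \ref{thm:ti} is quoted verbatim from the companion paper \cite{TI} (Theorem 4.4.1 there), so there is no internal proof to compare your argument against. What you have written is a sketch of how one might prove the transfer principle itself, and it is broadly aligned with the strategy of \cite{TI}: cell decomposition into a normal form, separation of the oscillatory factor, and reduction of convergence to a Presburger condition on the value group whose truth depends only on $q_\K$ and residue-field data.

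There is, however, a genuine gap in your second step. You propose to refine the cells so that on each piece either $\Lambda$ is constant or ``the integral of the character over the cell vanishes for all $\Lambda$ and the piece may be discarded.'' Cancellation of $\int_{\text{cell}}\Lambda(\gamma_i(x))\,dx$ is information about $\int F$, not about $\int|F|$; for $L^1$-integrability you must control the absolute value of the \emph{sum} $\sum_i \alpha_i q_\K^{\beta_i}\Lambda(\gamma_i)$ pointwise, and the obstruction is pointwise cancellation among the terms $i$ at a fixed $x$, not cancellation across $x$. Discarding a cell because the character integrates to zero on it is therefore not a valid move, and this is precisely where the real analytic difficulty of \cite{TI} lives. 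You do correctly identify that the missing bridge is the interpolation result \cite{TI}*{Theorem 4.3.3} (that an a.e.-integrable specialization family can be interpolated by a function integrable in the formal sense of \cite{cluckers-loeser:fourier}), but invoking it makes your argument essentially a restatement of the structure of \cite{TI} rather than an independent proof: Theorem 4.3.3 is itself the hard part of that paper, proved by a delicate analysis of which valued-field directions carry genuine oscillation, and your sketch does not supply that analysis. Steps 1 and 3 (cell decomposition; Presburger-definability of convergence of $\sum_\xi q_\K^{a(\xi)}$ and its dependence only on the residue field) are correct and standard.
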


\begin{thm}\cite{TI}*{Theorem 4.4.2}\label{thm:bounded}
Let $F$ be a motivic exponential
function on $h[n,0,0]$.
Then, for some $M>0$, for the fields $\K\in\cA_{M}\cup\cB_{M}$,
the truth of the statement that $F_{\K, \Lambda}$ is (locally) bounded
for all $\Lambda \in \scD_\K$
depends only on the isomorphism class of the residue field of $\K$.
\end{thm}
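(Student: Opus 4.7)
The plan is to mirror the proof of the integrability transfer (Theorem \ref{thm:ti}) and reduce local boundedness to a first-order sentence in the Denef--Pas language, then invoke the Ax--Kochen--Er\v sov-style transfer principle of Cluckers--Loeser \cite{cluckers-loeser:ax-kochen} to conclude. Concretely, I would first note that $F_{\K,\Lambda}$ is locally bounded on $\K^n$ iff for every $r_0 \in \Z$ there exists $N \in \Z$ with $|F_{\K,\Lambda}(y)| \leq q_\K^{N}$ for all $y$ in the compact ball $(\varpi_\K^{-r_0}\ri_\K)^n$; this is the form in which local boundedness is cleanest to express uniformly in $\K$.

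Next I would appeal to the cell decomposition and normal-form results for motivic exponential functions from \cite{cluckers-loeser:fourier}. After a definable cell decomposition of $h[n,0,0]$, on each cell $F$ admits a normal form in which the exponential character $\Lambda$ appears only through residue-field sums (which have modulus $1$ termwise) and the remaining motivic factor has modulus of the form $q_\K^{\alpha(x)}$ for some definable function $\alpha:\text{cell}\to\Z$, times a factor that is polynomially controlled in $q_\K$. The point is that boundedness is insensitive to polynomial factors in $q_\K$ for each fixed $\K$, so local boundedness of $F_{\K,\Lambda}$ is equivalent, piece by piece, to the existence, for each $r_0$, of an integer $N$ with $\alpha(y) \leq N$ on the cell intersected with the compact ball.

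The third step is then purely first-order: the last statement is a sentence in $\cL_\Z$ (quantifying over $r_0, N$ in the $\Z$-sort and over $y$ in the valued-field sort, referring only to the definable function $\alpha$ and the cell-defining formulas). By the Ax--Kochen--Er\v sov transfer principle for $\cL_\Z$-sentences (Cluckers--Loeser), for $\K \in \cA_M\cup \cB_M$ with sufficiently large residue characteristic, the truth of such a sentence in $\K$ depends only on the theory of $\rf_\K$, and for finite residue fields only on the isomorphism class of $\rf_\K$. Combined with the reformulation above, this is exactly the claim; the $\Lambda$-independence falls out because $\Lambda$ enters only through modulus-$1$ residue-field sums whose size bounds are absorbed into the polynomial factor.

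The main obstacle will be step two, namely getting a normal form that faithfully captures $|F_{\K,\Lambda}|$: oscillatory motivic sums can exhibit cancellation, and no pointwise lower bound of shape $q_\K^{\alpha(x)}$ holds in general. The right workaround, as in \cite{cluckers-loeser:fourier,TI}, is to separate the exponential factor from a purely motivic one and show that the motivic factor \emph{upper}-bounds $|F_{\K,\Lambda}|$ up to polynomial factors in $q_\K$, while a possible loss of unboundedness detection is handled by exhibiting, on the locus where the exponential sums do not cancel trivially (a definable subset of full measure in each cell), a matching lower bound. Checking that this suffices to detect unboundedness on the nose, uniformly across $\K$ with isomorphic residue fields, is the technical crux, and is where the refined interpolation result \cite{TI}*{Theorem 4.3.3} is needed.
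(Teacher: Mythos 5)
First, a point of context: this paper contains no proof of Theorem~\ref{thm:bounded} at all --- it is quoted verbatim from \cite{TI}*{Theorem 4.4.2} and used as a black box --- so there is no in-paper argument to match; the substance of your proposal has to be measured against the actual proof in \cite{TI}.

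Your outer strategy (reduce local boundedness to a condition on a non-exponential motivic function, express that condition in the Denef--Pas language, and transfer it by an Ax--Kochen--Er\v sov argument) is indeed the right shape, and steps one and three are fine. The genuine gap is exactly at the step you yourself flag as ``the technical crux,'' and the workaround you sketch does not close it. After cell decomposition, $F_{\K,\Lambda}$ is a finite sum of terms of the form $a_i(x)\sum_{\xi}\Lambda(g_i(x,\xi))(\cdots)$, and the modulus of such an expression is \emph{not} of the form $q_\K^{\alpha(x)}$ up to polynomial factors in $q_\K$: complete character sums over the residue field can vanish or exhibit square-root cancellation (Gauss, Kloosterman), and the distinct outer terms can cancel one another. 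So only the upper bound $|F_{\K,\Lambda}|\le C(q_\K)\,q_\K^{\alpha}$ is available, and the asserted equivalence ``$F_{\K,\Lambda}$ locally bounded iff $\alpha$ bounded on each ball'' fails in one direction; a transfer principle needs an exact, itself transferable, characterization. Your proposed fix --- a matching lower bound on a definable subset of full measure --- cannot work for \emph{boundedness}, which is not a measure-theoretic property: a lower bound off a null set does not show that boundedness of $F_{\K,\Lambda}$ forces boundedness of the motivic majorant, nor does it detect unboundedness concentrated on a small set. Moreover, \cite{TI}*{Theorem 4.3.3} is an $L^1$-interpolation statement and is not the tool that fills this hole. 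What actually does the work in \cite{TI} is the universal quantification over $\Lambda\in\scD_\K$ in the statement, which your proposal treats as incidental: by letting $\Lambda$ vary and exploiting (quasi-)orthogonality and linear independence of the characters attached to the distinct terms of the decomposition, one recovers the sizes $|a_{i,\K}(x)|$ from the whole family $(F_{\K,\Lambda})_{\Lambda\in\scD_\K}$ up to controlled powers of $q_\K$, and thereby shows that ``$F_{\K,\Lambda}$ is locally bounded for all $\Lambda$'' is equivalent to local boundedness of a genuine non-exponential motivic function, to which your first-order transfer step then applies. Without that orthogonality argument, step two of your proposal is unjustified.
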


The main technical result of this paper is that Fourier transforms of
orbital integrals are represented on the set of regular elements by motivic exponential functions. Thus, the transfer principles apply, yielding local
integrability
(respectively, local boundedness) for
$\K\in \cB_{M}$ for large $M$.


\begin{bibdiv}
\begin{biblist}
\bib{adler-roche:intertwining}{article}{
  author={Adler, Jeffrey D.},
  author={Roche, Alan},
  title={An intertwining result for $p$-adic groups},
  journal={Canad. J. Math.},
  volume={52},
  date={2000},
  number={3},
  pages={449\ndash 467},
  issn={0008-414X},
  review={MR1758228 (2001m:22032)},
}

\bib{adler-debacker:bt-lie}{article}{
  author={Adler, Jeffrey D.},
  author={DeBacker, Stephen},
  title={Some applications of Bruhat--Tits theory to harmonic analysis on the Lie algebra of a reductive $p$-adic group},
  journal={Michigan Math. J.},
  volume={50},
  date={2002},
  number={2},
  pages={263\ndash 286},
  issn={0026-2285},
  review={MR1914065 (2003g:22016)},
}

\bib{adler-debacker:mk-theory}{article}{
  author={Adler, Jeffrey D.},
  author={DeBacker, Stephen},
  title={Murnaghan--Kirillov theory for supercuspidal representations of tame general linear groups},
  contribution={with appendices by Reid Huntsinger and Gopal Prasad},
  journal={J. Reine Angew. Math.},
  volume={575},
  date={2004},
  pages={1\ndash 35},
  issn={0075-4102},
  review={MR2097545 (2005j:22008)},
}

\bib{adler-korman:loc-char-exp}{article}{
  author={Adler, Jeffrey D.},
  author={Korman, Jonathan},
  title={The local character expansion near a tame, semisimple element},
  journal={Amer. J. Math.},
  volume={129},
  date={2007},
  number={2},
  pages={381-403},
}

\bib{assem:rationality}{article}{
  author={Assem, Magdy},
  title={A note on rationality of orbital integrals on a $p$-adic group},
  journal={Manuscripta Math.},
  volume={89},
  number={},
  pages={267-279},
  year={1996},
}

\bib{ax-kochen}{article}{
  author={Ax, J.},
  author={Kochen, S.},
  title={Diophantine problems over local fields. I},
  journal={Amer. J. Math.},
  volume={87},
  year={1965},
  pages={605\ndash 630},
}

\bib{barbasch-moy:loc-char-exp}{article}{
  author={Barbasch, Dan},
  author={Moy, Allen},
  title={Local character expansions},
  language={English, with English and French summaries},
  journal={Ann. Sci. \'Ecole Norm. Sup. (4)},
  volume={30},
  date={1997},
  number={5},
  pages={553\ndash 567},
  issn={0012-9593},
  review={MR1474804 (99j:22021)},
}

\bib{Bour}{book}{
author =       {Bourbaki, N.},
  title =        {Vari\'et\'es diff\'erentielles et analytiques. {F}ascicule de r\'esultats},
  publisher =    {Hermann},
  year =         {1967},
  volume =       {},
  series =       {},
  address =      {Paris},
  edition =      {},
  note =         {({F}rench)},
}

\bib{bruhat-tits:reductive-groups-1}{article}{
  author={Bruhat, Fran\c cois},
  author={Tits, Jacques},
  title={Groupes r\'eductifs sur un corps local},
  language={French},
  journal={Inst. Hautes \'Etudes Sci. Publ. Math.},
  number={41},
  date={1972},
  pages={5\ndash 251},
  issn={0073-8301},
  review={MR0327923 (48 \#6265)},
}

\bib{carter:finite}{book}{
    author={Carter, Roger W.},
     title={Finite groups of Lie type},
    series={Wiley Classics Library},
 publisher={John Wiley \& Sons Ltd.},
     place={Chichester},
      date={1993},
     pages={xii+544},
      isbn={0-471-94109-3},
    review={MR1266626 (94k:20020)},
}

\bib{clozel:nonconnected}{article}{
  author={Clozel, Laurent},
  title={Characters of nonconnected, reductive $p$-adic groups},
  journal={Canad. J. Math.},
  volume={39},
  date={1987},
  number={1},
  pages={149\ndash 167},
  issn={0008-414X},
  review={MR889110 (88i:22039)},
}

\bib{cluckers-cunningham-gordon-spice}{article}{
  author={Cluckers, Raf},
  author={Cunningham, Clifton},
  author={Gordon, Julia},
  author={Spice, Loren},
  title={On the computability of some positive-depth supercuspidal characters near the identity},
  journal={Represent. Theory},
  volume={15},
  year={2011},
  number={},
  pages={531\ndash 567},
}

\bib{cluckers-loeser:ax-kochen}{article}{
  author={Cluckers, Raf},
  author={Loeser, Fran\c {c}ois},
  title={Ax-Kochen-Er\v sov theorems for $p$-adic integrals and motivic integration},
  conference={title={Geometric methods in algebra and number theory},},
  book={ series={Progr. Math.}, volume={235}, publisher={Birkh\"auser Boston}, place={Boston, MA}, },
  date={2005},
  pages={109--137},
}

\bib{cluckers-loeser:fourier}{article}{
  author={Cluckers, Raf},
  author={Loeser, Fran{\c {c}}ois},
  title={Constructible exponential functions, motivic Fourier transform and transfer principle},
  journal={Annals of Mathematics},
  volume={171},
  year={2010},
  number={2},
  pages={1011\ndash 1065},
}

\bib{cluckers-loeser}{article}{
  author={Cluckers, Raf},
  author={Loeser, Fran{\c {c}}ois},
  title={Constructible motivic functions and motivic integration},
  journal={Invent. Math.},
  volume={173},
  number={1},
  pages={23--121},
  date={2008},
}

\bib{cluckers-loeser:mixed}{article}{
 author={Cluckers, Raf},
 author={Loeser, Fran{\c {c}}ois},
 title={Motivic integration in all residue field characteristics for Henselian discretely valued fields of characteristic zero},
 journal={J. Reine Angew. Math.},
 note={to appear, preprint available at arXiv:1102.3832},
 }

\bib{cluckers-hales-loeser}{article}{
  author={Cluckers, Raf},
  author={Hales, Thomas},
  author={Loeser, Fran{\c {c}}ois},
  title={Transfer principle for the Fundamental Lemma},
  book={ title={On the Stabilization of the Trace Formula}, editor={L. Clozel}, editor={M. Harris}, editor={J.-P. Labesse}, editor={B.-C. Ng\^o}, publisher={International Press of Boston}, year={2011},},
}

\bib{TI}{article}{
  author={Cluckers, Raf},
  author={Gordon, Julia},
  author={Halupczok, Immanuel},
  title={Integrability of
oscillatory functions on local fields: transfer principles},
  status={preprint}, 
  note={http://arxiv.org/abs/1111.4405},
  year={2011},
}

\bib{debacker:homogeneity}{article}{
  author={DeBacker, Stephen},
  title={Homogeneity results for invariant distributions of a reductive $p$-adic group},
  language={English, with English and French summaries},
  journal={Ann. Sci. \'Ecole Norm. Sup. (4)},
  volume={35},
  date={2002},
  number={3},
  pages={391\ndash 422},
  issn={0012-9593},
  review={MR1914003 (2003i:22019)},
}

\bib{debacker:nilp}{article}{
  author={DeBacker, Stephen},
  title={Parametrizing nilpotent orbits via Bruhat--Tits theory},
  journal={Ann. of Math. (2)},
  volume={156},
  date={2002},
  number={1},
  pages={295\ndash 332},
  issn={0003-486X},
  review={MR1935848 (2003i:20086)},
}

\bib{debacker:singapore}{article}{
  author={DeBacker, Stephen},
  title={Lectures on harmonic analysis for reductive $p$-adic groups},
  book={ title={Representations of real and $p$-adic groups}, editor={Tan, Eng-Chye}, editor={Zhu, Chen-Bo}, series={Lecture Notes Series. Institute for Mathematical Sciences. National University of Singapore}, volume={2}, publisher={Singapore University Press}, place={Singapore}, date={2004}, },
  pages={47\ndash 94},
  review={MR2090869 (2005g:22009)},
}

\bib{denef-loeser:p-adic}{article}{
  author={Denef, Jan},
  author={Loeser, Fran{\c {c}}ois},
  title={Definable sets, motives and $p$-adic integrals},
  journal={J. Amer. Math. Soc},
  volume={14},
  date={2001},
  number={2},
  pages={429--469},
}

\bib{diwadkar:thesis}{article}{
  author={Diwadkar, Jyotsna Mainkar},
  title={Nilpotent conjugacy classes of reductive $p$-adic Lie algebras and definability in Pas's language},
  journal={Ph.D. Thesis, University of Pittsburgh},
  date={2006},
}


\bib{gordon-hales:transfer}{article}{
  author={Gordon, Julia},
  author={Hales, Thomas C.},
  title={Virtual transfer factors},
  journal={Represent. Theory},
  volume={7},
  date={2003},
  pages={81\ndash 100 (electronic)},
}

\bib{GY}{incollection}{
  author={Gordon, Julia},
  author={Yoav Yaffe},
  title={An Overview of Arithmetic Motivic Integration},
  booktitle={Ottawa lectures on Admissible Representations of reductive $p$-adic groups},
  editors={ C. Cunningham and M. Nevins, Eds.},
  year={2009},
  series={Fields Institute Monograph series},
  volume={26},
  publisher={American Mathematical Society, Providence, RI; Fields Institute for Research in Mathematical Sciences, Toronto, ON},
  pages={113\ndash 150},
}


\bib{gross:motive}{article}{
      author={Gross, B.},
       title={On the motive of a reductive group},
        date={1997},
     journal={Invent. Math.},
      volume={130},
       pages={287\ndash 313},
}

\bib{hales:computed}{article}{
  author={Hales, Thomas C.},
  title={Can $p$-adic integrals be computed?},
  book={ title={Contributions to automorphic forms, geometry, and number theory}, publisher={Johns Hopkins Univ. Press}, place={Baltimore, MD}, date={2004}, },
  pages={313\ndash 329},
  review={},
}

\bib{hc:harmonic-williamstown}{article}{
  author={Harish-Chandra},
  title={Harmonic analysis on reductive $p$-adic groups},
  book={ title={Harmonic analysis on homogeneous spaces}, editor={Moore, Calvin C.}, series={Proceedings of Symposia in Pure Mathematics}, volume={26}, publisher={American Mathematical Society}, place={Providence, R.I.}, date={1973}, pages={x+467},},
  pages={167\ndash 192},
  review={MR0340486 (49 \#5238)},
}

\bib{hc:submersion}{article}{
  author={Harish-Chandra},
  title={A submersion principle and its applications},
  book={ title={Geometry and analysis}, subtitle={Papers dedicated to the memory of V. K. Patodi}, publisher={Indian Academy of Sciences}, place={Bangalore}, date={1980}, pages={iv+166 pp. (1 plate)}, review={MR592246 (81i:58005)}, },
  pages={95\ndash 102},
  review={MR592255 (82e:22032)},
}

\bib{hc:queens}{book}{
  author={Harish-Chandra},
  title={Admissible invariant distributions on reductive $p$-adic groups},
  contribution={with a preface and notes by Stephen DeBacker and Paul J. Sally, Jr.},
  series={University Lecture Series},
  volume={16},
  publisher={American Mathematical Society},
  place={Providence, RI},
  date={1999},
  pages={xiv+97},
  isbn={0-8218-2025-7},
  review={MR1702257 (2001b:22015)},
}

\bib{hc:van-dijk}{book}{
    author={Harish-Chandra},
     title={Harmonic analysis on reductive $p$-adic groups},
contribution={notes by G. van Dijk},
 publisher={Springer--Verlag},
     place={Berlin},
      date={1970},
     pages={iv+125},
    review={MR0414797 (54 \#2889)},
}

\bib{howe:fourier}{article}{
  author={Howe, Roger E.},
  title={The Fourier transform and germs of characters (case of ${\rm Gl}\sb {n}$ over a $p$-adic field)},
  journal={Math. Ann.},
  volume={208},
  date={1974},
  pages={305\ndash 322},
  issn={0025-5831},
  review={MR0342645 (49 \#7391)},
}


\bib{kottwitz:clay}{incollection}{
      author={Kottwitz, Robert~E.},
       title={Harmonic analysis on reductive {$p$}-adic groups and {L}ie
  algebras},
        date={2005},
   booktitle={Harmonic analysis, the trace formula, and {S}himura varieties},
      series={Clay Math. Proc.},
      volume={4},
   publisher={Amer. Math. Soc.},
     address={Providence, RI},
       pages={393\ndash 522},
      review={\MR{2192014 (2006m:22016)}},
}

\bib{laumon:Drinfeld-modules}{book}{
   author={Laumon, Gerard},
   title={Cohomology of Drinfeld modular varieties},
   series={Cambridge studies in advanced math.},
   volume={41},
   year={1996},
}


\bib{lemaire:gl_n}{article}{
  author={Lemaire, Bertrand},
  title={Int\'egrabilit\'e locale des caract\`eres-distributions de $\operatorname {GL}_N(F)$ o\`u $F$ est un corps local non-archim\'edien de caract\'eristique quelconque},
  journal={Compositio Math.},
  volume={100},
  number={1},
  year={1996},
  pages={41\ndash 75},
}

\bib{lemaire:gl_n(D)}{article}{
  author={Lemaire, Bertrand},
  title={Int\'egrabilit\'e locale des caract\`eres tordus de $\operatorname {GL}_N(D)$},
  journal={J. Reine Angew. Math.},
  volume={566},
  year={2004},
  pages={1\ndash 39},
}

\bib{lemaire:sl_n(D)}{article}{
  author={Lemaire, Bertrand},
  title={Int\'egrabilit\'e locale des caract\`eres de $\operatorname {SL}_N(D)$},
  journal={Pacific J. Math.},
  volume={222},
  number={1},
  year={2005},
  pages={69\ndash 131},
}

\bib{mcninch:nilpotent}{article}{
   author={McNinch, George},
   title={Nilpotent orbits over ground fields of good characteristic},
   journal={Math. Ann.},
   volume={329},
   year={2004},
   pages={49\ndash 85},
}

\bib{moy-prasad:k-types}{article}{
  author={Moy, Allen},
  author={Prasad, Gopal},
  title={Unrefined minimal $K$-types for $p$-adic groups},
  journal={Invent. Math.},
  volume={116},
  date={1994},
  number={1--3},
  pages={393\ndash 408},
  issn={0020-9910},
  review={MR1253198 (95f:22023)},
}

\bib{oesterle:reduction}{article}{
  author={Oesterl\'e, J.},
  title={R\'eduction modulo $p^n$ des sous-ensembles analytiques ferm\'es de $\mathbb {Z}_p^N$},
  journal={Invent. Math.},
  volume={66},
  year={1982},
  number={2},
  pages={325--341},
}


\bib{ranga-rao:orbital}{article}{
  author={Ranga Rao, R.},
  title={Orbital integrals in reductive groups},
  journal={Ann. of Math. (2)},
  volume={96},
  date={1972},
  pages={505\ndash 510},
  issn={0003-486X},
  review={MR0320232 (47 \#8771)},
}

\bib{rodier:loc-int}{article}{
  author={Rodier, Fran{\c {c}}ois},
  title={Int\'egrabilit\'e locale des caract\`eres du groupe ${\rm GL}(n,k)$ o\`u $k$ est un corps local de caract\'eristique positive},
  language={French},
  journal={Duke Math. J.},
  volume={52},
  date={1985},
  number={3},
  pages={771\ndash 792},
  issn={0012-7094},
  review={MR808104 (87e:22042)},
}



\bib{S-T}{article}{
  author={Shin, Sug Woo},
  author={Templier, Nicolas},
  title={Sato-Tate theorem for families and low-lying zeroes of automorphic $L$-functions, with appendices by R. Kottwitz and J. Gordon, R. Cluckers and I. Halupczok},
  journal={http://arxiv.org/abs/1208.1945},
}


\bib{springer:lag2}{book}{
  author={Springer, Tonny A.},
  title={Linear algebraic groups},
  series={Progress in Mathematics},
  volume={9},
  publisher={Birkh\"auser Boston Inc.},
  place={Boston, MA},
  date={1998},
  pages={xiv+334},
  isbn={0-8176-4021-5},
  review={MR1642713 (99h:20075)},
}

\bib{veys:measure}{article}{
  author={Veys, Willem},
  title={Reduction modulo $p\sp n$ of $p$-adic subanalytic sets},
  journal={Math. Proc. Cambridge Philos. Soc.},
  volume={112},
  date={1992},
  number={3},
  pages={483--486},
  issn={0305-0041},
}


\bib{waldspurger:transferFL}{article}{
  author={Waldspurger, Jean-Loup},
  title={Endoscopie et changement de charact\'eristique},
  journal={J. Inst. Math. Jussieu},
  volume={5},
  number={3},
  year={2006},
  pages={423\ndash 525},
}

\bib{waldspurger:weighted}{article}{
  author={Waldspurger, Jean-Loup},
   title={Endoscopie et changement de charact\'eristique: int\'egrales orbitales pond\'er\'ees},
   journal={Ann. Inst. Fourier (Grenoble)},
   volume={59},
   year={2009},
   number={5},
   pages={1753\ndash 1818},
}

\bib{yu:models}{article}{
  author={Yu, Jiu-Kang},
  title={Smooth models associated to concave functions in Bruhat--Tits theory},
  status={preprint},
  year={2002},
  note={Version 1.3},
}

\end{biblist}
\end{bibdiv}
\end{document}